\documentclass{amsart}
\usepackage{graphicx}
\usepackage{amssymb}
\usepackage{amsfonts}
\usepackage{hyperref}
\usepackage{mathrsfs}
\setlength{\footskip}{1cm}

\usepackage[margin=3cm]{geometry}

\ProvidesPackage{dsfont}
  [1995/08/01 v0.1 Double stroke roman fonts]

\def\ds@whichfont{dsrom}
\DeclareOption{sans}{\def\ds@whichfont{dsss}}
\ProcessOptions\relax

\DeclareMathAlphabet{\mathds}{U}{\ds@whichfont}{m}{n}

\swapnumbers
\sloppy
\vfuzz2pt 
\hfuzz2pt 
\newtheorem{theorem}{Theorem}[section]
\newtheorem{lemma}[theorem]{Lemma}
\newtheorem{corollary}[theorem]{Corollary}
\newtheorem{proposition}[theorem]{Proposition}
\theoremstyle{definition}

\newtheorem{remark}[theorem]{Remark}
\newtheorem{example}[theorem]{Example}

\numberwithin{equation}{section}
\theoremstyle{plain}

\numberwithin{equation}{section} 
\numberwithin{figure}{section} 
\theoremstyle{plain}
\theoremstyle{plain}
\theoremstyle{remark}
\newtheorem*{acknowledgement*}{Acknowledgement}

\newcommand{\cB}{{\mathcal B}}
\newcommand{\cC}{{\mathcal C}}
\newcommand{\cD}{{\mathcal D}}
\newcommand{\cE}{{\mathcal E}}
\newcommand{\cF}{{\mathcal F}}
\newcommand{\cG}{{\mathcal G}}
\newcommand{\cH}{{\mathcal H}}

\newcommand{\cJ}{{\mathcal J}}

\newcommand{\cL}{{\mathcal L}}
\newcommand{\cM}{{\mathcal M}}

\newcommand{\cR}{{\mathcal R}}

\newcommand{\cW}{{\mathcal W}}
\newcommand{\cX}{{\mathcal X}}
\newcommand{\cY}{{\mathcal Y}}

\newcommand{\te}{{\theta}}

\newcommand{\Om}{{\Omega}}

\newcommand{\ve}{{\varepsilon}}
\newcommand{\del}{{\delta}}
\newcommand{\Del}{{\Delta}}

\newcommand{\Gam}{{\Gamma}}

\newcommand{\sig}{{\sigma}}
\newcommand{\al}{{\alpha}}
\newcommand{\be}{{\beta}}
\newcommand{\ka}{{\kappa}}
\newcommand{\la}{{\lambda}}


\newcommand{\bbE}{{\mathbb E}}

\newcommand{\bbN}{{\mathbb N}}
\newcommand{\bbP}{{\mathbb P}}
\newcommand{\bbR}{{\mathbb R}}

\newcommand{\bbI}{{\mathbb I}}




\begin{document}
\title[]{Convergence rates in the functional CLT for $\al$-mixing triangular arrays}
 \author{Yeor Hafouta \\
\vskip 0.1cm
Department of Mathematics\\
The Ohio State University}
\email{yeor.hafouta@mail.huji.ac.il, hafuta.1@osu.edu}

\maketitle
\markboth{Y. Hafouta}{Functional CLT rates}
\renewcommand{\theequation}{\arabic{section}.\arabic{equation}}
\pagenumbering{arabic}

\begin{abstract}
We obtain convergence rates (in the Levi-Prokhorove metric) in the functional central limit theorem (CLT) for partial sums $S_n=\sum_{j=1}^{n}\xi_{j,n}$ of  triangular arrays $\{\xi_{1,n},\xi_{2,n},...,\xi_{n,n}\}$ satisfying some mixing and moment conditions (which are not necessarily uniform in $n$).
For certain classes of additive functionals of triangular arrays of contracting Markov chains (in the sense of Dobrushin) we obtain rates which are close to the best rates obtained for independent random variables. 
In addition, we obtain close to optimal rates in the usual CLT and
 a moderate deviations principle and some Rosenthal type inequalities.
We will also  discuss applications to  some classes of local statistics (e.g. covariance estimators), as well as expanding non-stationary dynamical systems, which can be reduced to non-uniformly mixing triangular arrays by an approximation argument.
The main novelty here is that 
our  results are obtained without any assumptions about the growth rate of the variance of $S_n$. The result are obtained using a certain type of block decomposition, which, in a sense, reduces the problem to the case when the variance of $S_n$ is ``not negligible" in comparison with the (new) number of summands.  
\end{abstract}

\section{Introduction}\label{Intro}
\subsection{The functional CLT with rates}
A classical textbook materiel in probability theory is the central limit theorem (CLT) for partial sums $S_n=\sum_{j=1}^{n}\xi_{j,n}$ of zero mean  triangular arrays  $\{\xi_{j,n}: 1\leq j\leq n\}$ of independent random variables. One of the first extensions of the CLT to  triangular arrays of non-independent variables dates back to the famous result of Dobrushin \cite{Dub}, who provided optimal conditions for the CLT for additive functionals $\xi_{j,n}=g_{j,n}(\zeta_{j,n})$ of triangular arrays $\{\zeta_{j,n}\}$ of  contracting Markov chains. We refer to \cite{VarSeth} for a more modern presentation of Dobrushin's  CLT and to \cite{Pel} for the CLT under weaker contraction assumptions.
Since then, the CLT was studied under several assumptions for  ``weakly-dependent" (aka ``mixing") triangular arrays $\{\xi_{1,n},...,\xi_{n,n}\}$ (see, for instance, \cite{Volny, Utev1991, Rio1995, Peligrad1996, Ri, RioBook, Neu, NonStatCLT, PelBook}), but without convergence rates in the general case  when the variance $V_n$ of $S_n$ does not grow linearly fast\footnote{The variance exhibits linear growth for partial sums of a (weakly) stationary sequence $\xi_{j,n}=\xi_j$.} in $n$. CLT's for several classes of time-dependent (non-stationary) expanding or hyperbolic dynamical systems were also obtained, see, for instance \cite{Kifer1998, Conze, NTV, HK}.\footnote{We note that there are many results in the stationary case, however we prefer not elaborate a lot about on that  in our presentation since the results in this paper do not require any type of stationarity assumptions.}
 
A stronger version of the CLT is the, so called, functional CLT. Let $S_{k,n}=\sum_{j=1}^k\xi_{j,n}$ and for every $t\in[0,1]$ set
$$
v_n(t)=\min\{1\leq k\leq n:\,\text{Var}(S_{k,n})\geq t V_n\},\, t\in[0,1].
$$ 
Let us consider the random function $W_n(\cdot)$ on $[0,1]$ given by 
$$
W_n(t)=\sig_n^{-1}\sum_{j=1}^{v_n(t)}\xi_{j,n},\,\,\,\sig_n=\sqrt{V_n}=\sqrt{\text{Var}(S_n)}.
$$
Then the functional CLT states that the random function $W_n(\cdot)$ converges in distributed towards the restriction of a standard Brownian motion to $[0,1]$. When $\xi_{j,n}=\xi_j$ forms a weakly-stationary sequence $\{\xi_j\}$ then the functional CLT has been obtained under appropriate mixing and moment conditions by many authors (see, for instance, \cite{PS, Mclish75, Mclish}), where in this case we can replace $v_n(t)$ by $[nt]$. Recently, the functional CLT was obtained under essentially optimal uniform $\rho$-mixing rates in \cite[Theorem 4.1]{MPU} for triangular arrays (and, in particular, for non-stationary sequences).
One of  the main conditions imposed in \cite{MPU} is the following  assumption: there exists a constant $C>0$ so that for all $n\in\bbN$ we have
\begin{equation}\label{Cond1}
\sum_{j=1}^n\text{Var}(\xi_{j,n})\leq C\text{Var}\left(\sum_{j=1}^n\xi_{j,n}\right).
\end{equation}
The results in \cite{MPU} relied on a new martingale approximation technique which, later  on, in \cite{MP} was applied for sufficiently fast uniformly $\al$-mixing triangular arrays, where the main corollary of the general results was a functional CLT under the assumptions that 
\begin{equation}\label{Cond2}
\sum_{j=1}^n\|\xi_{j,n}\|_{2+\del}^2\leq C\text{Var}\left(\sum_{j=1}^n\xi_{j,n}\right)
\end{equation}
for some $\del>0$. In \cite{HafFCLT} we have shown that for both uniform $\rho$ and $\al$-mixing arrays, the functional CLT holds under slightly stronger mixing rates (in comparison with \cite{MPU,MP}) and some additional growth assumptions, but without the additional ``structural" assumptions \eqref{Cond1} or \eqref{Cond2}. In this paper we will obtain convergence rates in the functional CLT for $\al$-mixing triangular arrays, without condition \eqref{Cond2}. For instance, for uniformly bounded arrays so 
that $\sum_{j=1}^{\infty}(\al(j))^{1/p-1/q}<\infty$ for some $q>p>2$ and $\phi(m_0)<\frac12$ for some $m_0\in\bbN$, where $\al(\cdot)$ and $\phi(\cdot)$ are the uniform $\al$ and $\phi$ mixing coefficients of the array (see Section \ref{MixSec}) we show that
$$
d_P(W_n,B)=O(\sig_n^{-w(p)}),\,\, \text{where }w(p)\to 1/4 \,\text{ as }\, p\to\infty 
$$
where $d_P$ is the Levi-Prokhorov metric  and $B$ is a standard Brownian motion. In particular, if, in addition,  $\sum_j(\al(j))^{\del}<\infty$ for all $\del>0$  then
$$
d_P(W_n,B)=O(\sig_n^{-1/4+\epsilon})
$$
for all $\epsilon>0$. Moreover, as will be described in the following paragraphs, our results will be applicable also for non-uniformly mixing\footnote{An array is non-uniformly mixing when for each $k$ the $k$-mixing coefficient ($\al_n(k)$ or $\phi_n(k)$, see \eqref{al def} and \eqref{phi def}, etc)  of the finite sequence $\{\xi_{j,n}:1\leq j\leq n$\} diverges to $\infty$ as $n\to\infty$. This include case  when for some $k_n=o(n)$ the pairs $\xi_{j,n}$ and $\xi_{j+k,n}$ become weakly dependent only when $k>k_n$ (i.e. after $k_n$ steps).}  triangular arrays, for which even the functional CLT itself (i.e. without any rates) seems to be a new result. As will be described below, our results in the non-uniform mixing case include applications to non-uniformly contracting Markov chains and some types of local statistics.

 Even though we have not found any result in the literature about the convergence rate in the functional CLT for general stationary sufficiently fast mixing sequences, it seems to us that approximating by martingales and then using known results for martingales (e.g. \cite[Theorem 4]{Courbot}) yields rates which are at best of order $O(n^{-1/8+\epsilon})=O(\sig_n^{-1/4+\epsilon/2})$ for all $\epsilon>0$, and the described above result provides such rates in the non-stationary case, where the variance of $S_n$ can grow arbitrary slow (still, see Remark \ref{R beta 1}).

The best rate obtained in general\footnote{In the iid case, it seems that the best rate obtained is $O(n^{-1/4}\ln n)=O(\sig_n^{-1/2}\ln \sig_n)$, see \cite{Dud} and  also \cite{Bor} for a survey about rates in the functional CLT. Let us note that the reason that the power $-1/4$ is the best one obtained (in general) is due to the best known approximation rates in the Skorokhod embedding theorem.} for arrays of independent random variables is $\sig_n^{-1/2}\ln \sig_n$,
 which is significantly better than $O(\sig_n^{-1/4+\epsilon})$. For additive functionals of Markov chains we will be able to get close to the $O(\sig_n^{-1/2}\ln \sig_n)$  rates, and one of the consequences of our general results is that for  uniformly bounded functionals $\xi_{j,n}=g_{j,n}(\zeta_{j,n})$ of a triangular arrays of  Markov chain  $\{\zeta_{j,n}: 1\leq j\leq n\}$ with the above  mixing conditions we have
\begin{equation}\label{ForM}
d_P(W_n,B)=O(\sig_n^{-w(p)}),\,\, \text{where }w(p)\to 1/2 \,\text{ as }\, p\to\infty. 
\end{equation}
Thus, if $\sum_j(\al(j))^{\del}<\infty$ for all $\del>0$ and the array is uniformly bounded then
$$
d_P(W_n,B)=O(\sig_n^{-1/2+\epsilon})
$$
for all $\epsilon>0$. In particular,  the rates $O(\sig_n^{-1/2+\epsilon})$ hold true for uniformly contracting Markov chains in the sense of Dobrushin (see Section \ref{SecDob}), as well as for certain type of non-uniformly contacting chains. In both results described above the function $w(p)$ has an explicit form.

Our more general results include rates for triangular arrays which are not uniformly bounded (not even in $L^q$ for some $q$)\footnote{Instead we will have certain growth rates on $K_{q,n}=\max_j\|\xi_{j,n}\|_q$ for some $q>2$.} and are not uniformly mixing. As an application,
 in Section \ref{SecDob}  we  obtain rates for non-uniformly contracting Markov chains $\{\zeta_{j,n}\}$ and several classes of functionals $\xi_{j,n}=g_{j,n}(\zeta_{j,n})$ which are not uniformly bounded in $L^q$. In this setup we will also discuss possible growth rates for $\sig_n^2$ and nonlinear (in $t$ and $n$) behavior of $v_n(t)$ (see Section \ref{SecVarDob}), which will demonstrate the need in considering $v_n(t)$ instead of $[nt]$ in the non-stationary setup. 

We will also we obtain  functional CLT rates  for arrays of ``Markov chains" with conditional memory\footnote{Namely, $\bbP(\zeta_{j,n}\in A|\zeta_{1,n},....,\zeta_{j-1,n})=\bbP(\zeta_{j,n}\in A|\zeta_{j-m_n,n},....,\zeta_{j-1,n})$ for all $j\geq m_n$.} $m_n$ satisfying certain growth conditions  in $n$, and  the rates $O(\sig_n^{-1/2+\ve})$ are achieved, for instance, when $m_n$ grows slower than $\sig_n^\del$ for all $\del>0$. We refer to \cite{FZ} for examples (e.g. covariance estimators) in which $\sig_n^2$ grows linearly fast in $n$, a case in which the allowed growth rate of the memory $m_n$ will be $o(n^\del)$ for all $\del>0$  (we will also get some rates when $m_n=o(n^{\del_0})$ for some $\del_0$ small enough).

As mentioned above, in the iid case, it seems that the best rate obtained is $O(n^{-1/4}\ln n)=O(\sig_n^{-1/2}\ln \sig_n)$.
  Recently, in \cite{AM}  rates of order $O(n^{-1/4+\epsilon})=O(\sig_n^{-1/2+2\ve})$ for all $\epsilon>0$ were obtained (in particular) for certain classes of stationary dynamical systems admitting a tower extension (in the sense of Young) with exponential tails\footnote{When the tails are $O(n^{-p-1})$ the rates obtained in \cite{AM} has the same form as in \eqref{ForM}.}.  In a certain sense Young towers (and other expanding or hyperbolic dynamical systems) are dynamical counterparts of Markov chains, and so our rates in the Markovian case is consistent with \cite{AM}. While the tower structure is more complicated than Markovian assumptions, our results hold true in the nonstationary setup, where $\sig_n^2$  can grow arbitrary slow, and not only linearly fast (as opposed to the stationary case).


\subsection{Additional limit theorems: Berry-Esseen type estimates, moment type estimates and moderate deviations principle}

Another result obtained in this paper is almost optimal convergence rate in the CLT for $\hat S_n=(S_n-\bbE[S_n])/\sig_n$, where $S_n=\sum_{j=1}^{n}\xi_{j,n}$, $\sig_n=\sqrt{\text{Var}S_n}$ and $\{\xi_{1,n},...,\xi_{j,n}\}$ is a triangular array satisfying certain mixing and moment conditions, which are not necessarily uniform in $n$.  
In \cite{DMR} a rate of order $\sig_{n}^{-1/2}\sqrt{\ln\sig_n}$ was obtained for $\rho$-mixing sequences (which also holds for uniformly mixing triangular arrays), which seems to be the first result of this kind in the general non-stationary setup, where the variance of $S_n$ is not assumed to grow at a certain rate.
 Here, for sufficiently fast mixing arrays  we improve this rate.
We show (in particular) that if $\{\xi_{1,j},...,\xi_{n,n}\}$ is exponentially fast $\alpha$-mixing (uniformly in $n$) and uniformly bounded then
the following almost optimal convergence rate in the CLT holds,
\begin{equation}\label{Rate}
\sup_{t\in\bbR}\left|\bbP(\hat S_n\leq t)-\Phi(t)\right|=O(\sig_n^{-1}\ln^3\sig_n)
\end{equation}
assuming also that $\phi(n_0)<\frac12$ for some $n_0$. In particular \eqref{Rate} holds true when $\phi(n)$ decays exponentially fast as $n\to\infty$.
When the uniform $\alpha$ mixing coefficients  decays as $O(j^{-\te})$ when $j\to\infty$ for some $\te>1$ we obtain the rate $\sig_n^{-\eta(\te)}$, where $\eta(\te)$ converges converges monotonically  to $1$ as $\te\to\infty$ (and has an explicit form).  We also obtain rates when the mixing coefficients decay (possibly polynomially fast) non-uniformly in $n$. For instance, when, roughly speaking, the ``amount of non-uniformity" in $n$ is $O(\ln \sig_n)$, and we have exponentially fast $\alpha$-mixing, then we still get rates of the form \eqref{Rate} but with $\ln^{4}\sig_n$ instead of 
$\ln^3\sig_n$. Other rates are obtained when the amount of non-uniformity is of magnitude $O(\sig_n^\del)$ for $\del$ small enough.
All of these results are obtained using  the so called Stein-Tikhomirov method together with a ``reduction" to the case when the variance of $S_n$ grows  sufficiently fast in the number of summands $n$. In Section \ref{SecDob} we will provide several examples of possible rates in the case of non-uniformly mixing triangular arrays (and  summands $\xi_{j,n}$ which are not even uniformly bounded in some $L^q$ for some $q$).

The rates in \eqref{Rate} are close to the optimal rate $\sig_n^{-1}$, and the main novelty in \eqref{Rate} is that they are obtained under no assumptions on the growth rates of $\text{Var}(S_n)=\sig_n^2$. When the latter grows linearly fast in $n$, then the optimal rate $\sig_n^{-1}$ was obtained for several classes of ``mixing" non-stationary sequences, see \cite{St1, Sku, Tik, Ri, HK, HafNonlin, DavH VV, HafYT}. In \cite{DolgH} optimal rates were obtained for bounded additive functionals of uniformly elliptic inhomogeneous Markov chains, without any growth rates on $\sig_n$ other than $\sig_n\to\infty$, and \eqref{Rate} shows that without the Markovian assumption, using only mixing properties we get almost optimal rates.

Finally, we recall that a moderate deviations principle deals  with asymptotic
behavior of probabilities of the form $\bbP(S_n/c_n\in\Gamma)$, where $c_n$ is a certain normalizing sequence with respect to a certain speed function $s_n$. When the $\phi$-mixing coefficients of the array decay stretched exponentially fast (possibly not uniformly in $n$) we prove, in particular, that there are $\gamma_1,\gamma_2>0$ so that for every set Borel set $\Gamma\subset\bbR$ whose closure coincides\footnote{e.g an interval} with the closure of its interior we have 
$$
\lim_{n\to\infty}\sig_n^{-\gamma_1}\bbP(\sig_n^{-1-\gamma_2}S_n\in\Gamma)=-\frac12\inf_{x\in\Gamma}x^2.
$$
In the terminology of \cite{DemZet}, such results can be regarded as a large deviations principle with the rate function $I(x)=-x^2/2$. 
For sufficiently fast mixing sequences, when the variance of $S_n$ grows  linearly fast, 
such results were obtained using the, so-called, method of cumulants \cite{SaulStat} (see also \cite{Douk2,Ha} and references therein for additional applications of this method for mixing sequences whose variance grows linearly fast in $n$).
 This method has proven to be effective also  as for locally dependent random variables \cite{Dor}. 
The moderate deviations in this paper are also obtained using the method of cumulants, combined with a block partition argument.
 As a by product,  we are also able to obtain a certain type of Rosenthal type inequalities, showing that for each integer $p>2$,
$$
\bbE[(S_n-\bbE[S_n])^p]=\sig_n^p\bbE[Z^p]+O(\sig_n^{p-1})
$$ 
where $Z$ is a standard normal random variable. In particular,
$$
\lim_{n\to\infty}\bbE[(\hat S_n)^p]=\bbE[Z^p], \,\,\hat S_n=(S_n-\bbE[S_n])/\sig_n.
$$

\section{Preliminaries and main abstract results}\label{Main}
In this section we will describe our main results in an abstract form. The readers who are interested to see concrete examples at this stage are referred to Sections \ref{SecDob} and \ref{Local} (especially to Section \ref{SecDob} where all the results are formulated explicitly and independently).

\subsection{Non-uniform mixing arrays}\label{MixSec}
In this section we describe the type of triangular arrays considered in  this paper, present the corresponding arrays of mixing coefficients and our main assumptions concerning these arrays.

Let $\{\zeta_{1,n},\zeta_{2,n},...,\zeta_{n,n}\}$ be a triangular array of random variables defined on some probability space $(\Om,\cF,\bbP)$, taking values on measurable spaces $\cX_{0,n},\cX_{1,n},\cX_{2,n},...,\cX_{n,n}$, respectively. 
Let  $g_{j,n}:\cX_{j,n}\to\bbR$ be measurable functions and set $\xi_{j,n}=g_{j,n}(\zeta_{j,n})$.
We assume here that $\xi_{j,n}$ are square integrable.
In this paper we  obtain various quantitative limit theorems for random variables related to the partial sums
$$
S_{k}^{(n)}=\sum_{j=1}^k\xi_{j,n},\,k\leq n
$$
under several moments and mixing conditions, which are not necessarily uniform in $n$.
We will assume here that $\bbE[\xi_{j,n}]=0$, which is not really a restriction since we can always replace $g_{j,n}$ with $g_{j,n}-\bbE[g_{j,n}(\zeta_{j,n})]$.
Set $S_n=S_{n}^{(n)}$, $V_n=\text{Var}(S_n)$ and $\sig_n=\sqrt{\text{Var}(S_n)}$. In this manuscript we will always assume that $\lim_{n\to\infty}V_n=\infty$, but, expect for  some specific applications, we will not assume any growth rates.
Let us also assume that for some $q>2$ we have 
$$
K_{q,n}=\max_{j\leq n}\|\xi_{j,n}\|_{q}<\infty.
$$
Our result will make use of the following two types of mixing (weak-dependence) coefficients.
 Recall that for any two sub-$\sig$-algebras $\cG$ and $\cH$ of $\cF$ the $\al$-dependence (mixing) coefficient of $\cG$ and $\cH$ is given by
\[
\al(\cG,\cH)=\sup\{|\bbP(A\cap B)-\bbP(A)\bbP(B)|: A\in\cG, B\in\cH\}.
\] 
 Then the $k$-th $\al$-mixing coefficient of the finite sequence $\{\zeta_{j,n}:\,1\leq j\leq n\}$ is defined by 
\begin{equation}\label{al def}
\al_n(k)=\sup\{\al(\cF_n(s),\cF_n(s+k,n)): s\leq n-k\},\, k\leq n
\end{equation}
where $\cF_n(s)$ is the $\sig$-algebra generated by $\zeta_{j,n}, j\leq s$ and $\cF_n(s+k,n)$ is the $\sig$-algebra generated by $\zeta_{j,n}, s+k\leq j\leq n$. The uniform $\al$-mixing coefficients of the array $\zeta$ are given by $\al(j)=\sup_n\al_n(j)$.

Some of our result will be effective under conditions which involve the so-called $\phi$-mixing coefficients, defined as follows.
For any two sub-$\sig$-algebras $\cG$ and $\cH$ of $\cF$ we set
$$
\phi(\cG,\cH)=\sup\{|\bbP(B|A)-\bbP(B)|: A\in\cG, B\in\cH, \bbP(A)>0\}.
$$
The $k$-th $\phi$-mixing coefficient of the a finite sequence $\{\zeta_{j,n}:\,1\leq j\leq n\}$ is defined by
\begin{equation}\label{phi def}
\phi_n(k)=\sup\{\phi(\cF_n(s),\cF_n(s+k,n)): s\leq n-k\},\, k\leq n.
\end{equation}
The uniform $\phi$-mixing coefficients of the entire array are given by $\phi(k)=\sup_n\phi_n(k)$.


\subsubsection{\textbf{Arrays of mixing coefficients and sequences of non-uniform growth}}\label{SecMixCoef}

Let us now set 
$$
S_{k,m}^{(n)}=\sum_{j=k}^{k+m-1}\xi_{j,n},\,\, S_n=S_{1,n}^{(n)}
$$ 
and let $V_n=\bbE[S_n^2]=\text{Var}(S_n)=\sig_n^2$.

\subsubsection{A quantification of the amount of non-uniformity}

Our main results involve the following sequences which, in a sense, measure the ``amount of nonuniformity" of the array.

 
Let us take some $2<p<q$  and set 
\begin{equation}\label{Gamma n def}
\Gamma_n(x)=\sum_{x\leq j\leq n}(\al_n(j))^{1/p-1/q}.
\end{equation}
Let $r_n=\max\left(\Gamma_n^{-1}(\frac1{8 K_{q,n}}),1\right)$ and
\begin{equation}\label{A_n def}
A_n=18r_n(1+K_{2,n})
\end{equation}
where $K_{r,n}=\max_j\|\xi_{j,n}\|_{r}$ for every $r$ and  $\Gamma_n^{-1}(y)$ is the smallest positive integer $m$ so that $\Gamma_n(m)\leq y$.
 We assume here that 
\begin{equation}\label{AA}
A_n=o(\sig_n)
\end{equation}
which is a certain restriction on the ``amount of nonuniformity" of mixing and moment conditions that the arrays is ``allowed" to have.

Next, for each $n$ let us denote by $\be_n=\beta_n(q,A_n)$ the smallest number greater or equal to $1$ so that  
for all $n$ and $k,m$ such that 
$\max_{0<l\leq m}\|S_{k,l}^{(n)}\|_2\leq 4A_n$ we have
\begin{equation}\label{MaxMom}
\left\|\max\{|S_{k,l}^{(n)}|:  0<l \leq m\}\right\|_{q}\leq \beta_n A_n.
\end{equation}
Let us note that $\beta_n$ is well defined and finite since it is the maximum of all possible ratios between the expressions on the left hand side of \eqref{MaxMom} and $A_n$  when  $m+k\leq n$ (the latter expressions are finite since $K_{q,n}<\infty$).

\begin{remark}\label{R beta}
We only need to use $\beta_n$ defined through \eqref{MaxMom} when the variance $\text{Var}(S_n)=\sig_n^2$ does not satisfy $c_1 n\leq \sig_n^2\leq c_2 n$ for some $c_1,c_2>0$ and all $n$ large enough (the exact meaning of this will be clear after we present the main results, see Remark \ref{R beta 1}). 
 While in general we do not assume any growth rate  on $\sig_n^2$, in some statistical applications (e.g. \cite{FZ}) we have $\sig_n^2/n\to\sig^2>0$ for some non-uniformly mixing arrays. In Section \ref{Local} we will discuss applications to a class of examples similar to \cite{FZ} (but under weaker assumptions like $\sig_n^2\geq cn^{\gamma}$ for some $\gamma>0$). 
\end{remark}

\subsection{Upper bounds on $\beta_n$ from \eqref{MaxMom}: a discussion}\label{MMsec}
While the formulation of our main results does not require any assumptions on $\beta_n$, the main results in this paper will only be effective  when $\beta_n=o(\sig_n^{t_0})$ for some $t_0$ small enough (depending on the circumstances) since some power of $\beta_n$ will always appear as a multiplicative constant in the upper bounds in our quantitative results. 
Before formulating our main results let us explain how we can obtain effective upper bounds on $\beta_n$.

Let us assume that 
\begin{equation}\label{jn ass}
\phi_n(j_n)<\frac12-\ve\, \text{ for some }\,j_n<n\, \text{ and a constant }\,\ve>0.
\end{equation}
 Then by \cite[Theorem 6.17]{PelBook}\footnote{It is clear from the proof that, in the notations of  \cite[Theorem 6.17]{PelBook}, when $\phi(n_0)<\frac12-\ve$ the constant appearing there depends on $\phi(n_0)$ only through $\ve$.}, there is a constant $C_{\ve}$ so that for every $q>2$  and all relevant $k,m$,
\begin{equation}\label{Pel}
\left\|\max\{|S_{k,l}^{(n)}|:  0\leq l\leq m\}\right\|_{q}\leq C_{\ve}q\left(j_n\left\|\max_{k\leq j<k+m}|\xi_{j,n}|\right\|_{q}+\max_{0<l\leq m}\|S_{k,l}^{(n)}\|_2\right).
\end{equation}
Hence we have
$$\beta_n\leq C_\ve q\left(1+j_n\left\|\max_{1\leq j\leq n}|\xi_{j,n}|\right\|_{q}\right).$$ 
Let us provide  more specific upper bounds on $\beta_n$ in two cases.

\begin{enumerate}
\item First, it is clear that
$$\beta_n\leq C_\ve q\left(1+j_nK_{\infty,n}\right),\,\,K_{\infty,n}=\max_j\|\xi_{j,n}\|_\infty.$$
As noted above, our results will only be effective when $\beta_n=O(\sig_n^{t_0})$ for $t_0>0$ small enough, depending on the circumstances. Hence, we can always work under the assumption that $$j_nK_{\infty,n}=O(\sig_n^{t_0}).$$
In particular, our results will be effective when $\sup_n\max_j\|\xi_{j,n}\|_\infty<\infty$ (i.e. for uniformly bounded arrays) and $\sup_{n} j_n<\infty$. We refer to Section \ref{SecDob} for examples of non-uniformly mixing triangular arrays (contracting Markov chains) and  non-uniformly bounded functions $g_{j,n}$ for which the condition  $j_nK_{\infty,n}=O(\sig_n^{t_0})$ is satisfied with an arbitrary $t_0$. 

\vskip0.2cm
\item Suppose now that $\sig_n\geq Cn^{\del}$ for some $\del>0$. Let $q_0\geq q$. Since
$$
\max_{k\leq j<k+m}|\xi_{j,n}|^{q_0}\leq\sum_{j=1}^n|\xi_{j,n}|^{q_0}
$$
we have
$$\beta_n\leq C_\ve q\left(1+j_n n^{1/q_0}K_{q_0,n}\right)$$ where $K_{q_0,n}=\max_j\|\xi_{j,n}\|_{q_0}$.
Hence if $j_n K_{q_0,n}=O(n^a)$ then 
$$
\beta_n=O(n^{a+\frac1{q_0}})
$$
which will yield effective results when $a$ is small enough and $q_0$ is large enough. In Section \ref{SecDob} we will provide several examples\footnote{See also an example in Section \ref{Sec7.2}.} in which $\sig_n\geq Cn^{\del}$ and $\beta_n=O(n^{a+\frac1q_0})$ for arbitrary $\del,a$ and $q_0$. We would also like to refer to \cite{FZ} for examples in which $\sig_n^2$ grows linearly fast in $n$.
\end{enumerate}



\subsection{A functional CLT with rates}
 For each $t\in[0,1]$, set $v_n(t)=\min\{1\leq k\leq n: \sig_{k,n}^2\geq t\sig_n^2\}$ where $\sig_{k,n}^2$ is the variance of $\sum_{j=1}^k\xi_{j,n}$. Consider   the random function 
$$
W_n(t)=\sig_n^{-1}\sum_{j=1}^{v_n(t)}\xi_{j,n}=\sig_n^{-1}S_{v_n(t)}^{(n)}
$$
on $[0,1]$. Then $W_n(\cdot)$ is a random element of the  Skorokhod space $D[0,1]$. Let us consider $D[0,1]$ as a metric space with the uniform metric $d(f,g)=\sup_{t\in[0,1]}|f(t)-g(t)|$. Recall that the Prokhorov (or the Levi-Prokhorov) distance between two probability distributions $\mu,\nu$ on $D[0,1]$ is given by 
$$
d_P(\mu,\nu)=\inf\{\ve>0: \,\mu(B)\leq \nu(B^\ve)+\ve\,\text{ and }\,\nu(B)\leq \mu(B^\ve)+\ve\,\text{ for all Borel sets }B\}
$$
where $B^\ve$ is the $\ve$-neighborhood of $B$. 
When $X,Y$ are $D[0,1]$-valued random variables with laws $\mu_X$ and $\mu_Y$, respectively, we abuse the notation and write $d_P(X,Y):=d_P(\mu_X,\mu_Y)$. Then a sequence of $D[0,1]$-valued random variables $Z_n$ converges in distribution to a $D[0,1]$-valued random variable $Z$ if and only if $d_P(Z_n,Z)$ converges to $0$. 

\begin{remark}
When $\xi_{j,n}=\xi_j$ forms a (weakly) stationary sequence then $v_n(t)\thickapprox nt$, however for nonstationary sequences (and triangular arrays) $v_n(t)$ is not necessarily (essentially) a linear function of $t$ or $n$. This is demonstrated in Section \ref{SecVarDob} where we show that for certain types of functionals of triangular arrays of contracting Markov chains, $v_n(t)$ behaves like $t^{q}\sig_n^2$ for some $0<q<1$ (in this case $\sig_n^2$ grows like $n^{a}$ for some $a\in(0,1)$).  For some other functionals it behaves like $n^{a(t)t}$ for some function $a(t)$ so that $C_1\leq a(t)\leq C_2$, where $C_1$ and $C_2$ are positive constants (in this case $\sig_n^2$ grows logarithmically fast in $n$). 
\end{remark}

Our first result is as follows.

\begin{theorem}\label{FunCLT} 
Assume that $\lim_{n\to\infty}\sig_n=\infty$ and that $A_n=o(\sig_n)$. Let $l_n$ be sequence so that $l_n\leq\frac{\sig_n^2}{18 A_n^2}$, where  $A_n$ was defined in \eqref{A_n def}. Then there is a constant $C_{p}$ which depends only on $p$ so that 
\begin{equation}\label{ProkEst}
d_P(W_n,B)\leq C_{p}C(n;q)\left(\sig_n^{-\frac{p-2}{2p}}|\ln \sig_n|^{3/4}+
\mathfrak q_n^{\frac{p}{2p+4}}|\ln \mathfrak q_n|^{1/2}\right)
\end{equation}
where $\mathfrak q_n=l_n^{1/2}\sig_n^{-1}+l_n\sig_n^{-2(1-2/p)}+l_n^{-1/2}$ and
$$
C(n;q)=(K_{q,n}+1)A_n\beta_n\Gamma_n(1).
$$
\end{theorem}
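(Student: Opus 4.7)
The proof will rest on three pillars: (i) the variance linearization of Proposition \ref{VLT}, which reduces the problem to a block array with (quasi-)linearly growing variance; (ii) a two-scale coarsening at the scale $l_n$, which lets the $l_n$-parameter balance a within-super-block fluctuation error against a martingale-CLT error; (iii) a martingale coboundary representation at the super-block scale together with a quadratic-variation estimate good enough to drive a martingale functional CLT with Prokhorov rates.

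\textbf{Step 1 (linearization and reduction to the block array).} Apply Proposition \ref{VLT} to write $S_n=\sum_{j=1}^{k_n}\Xi_{j,n}$ with $\Xi_{j,n}=\sum_{k\in B_j(n)}\xi_{k,n}$, so that $Q_n\le \|\Xi_{j,n}\|_2^2\le 9 Q_n$ and $k_n\asymp \sig_n^2/Q_n$. Assumption \ref{MaxMom} applied on each block yields $\|\Xi_{j,n}\|_{p_0}\le \be_n\sqrt{Q_n}$, and the same inequality applies to partial sums within a single block. The filtration $\cG_j^{(n)}=\cF_{0,b_j(n)}^{(n)}$ produces a new mixing array whose coefficients $\varpi_{q,q}(\cG_{j}^{(n)},\cG_{j+s}^{(n)\,c})$ are dominated by $\varpi_{q,q,n}(\cdot)$. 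For each $t\in[0,1]$ the index $v_n(t)$ sits in a unique block $B_{j^\star(t)}(n)$, so the ``initial'' approximation
\[
W_n(t)=\sig_n^{-1}\sum_{j<j^\star(t)}\Xi_{j,n}+\sig_n^{-1}R_n(t)
\]
holds, with $R_n(t)$ a partial sum inside a single $B_j(n)$. Assumption \ref{MaxMom} (combined with \cite[Theorem 6.17]{PelBook} applied within blocks) bounds $\|\sup_t|R_n(t)|\|_{p_0}\le C\be_n Q_n^{1/2}$, which is absorbed into the $\sig_n^{-(p_0-2)/(2p_0)}$-type rate via the multiplicative constant $A_{1,p_0}(n)$.

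\textbf{Step 2 (super-block coarsening at scale $l_n$).} Partition $\{1,\dots,k_n\}$ into consecutive super-blocks $\widetilde B_1,\dots,\widetilde B_{M_n}$ of size $l_n$ (with $M_n\asymp k_n/l_n$; the constraint $l_n\le \sig_n^2/(18Q_n)$ guarantees $M_n\ge 1$). Let $\widetilde\Xi_{i,n}=\sum_{j\in\widetilde B_i}\Xi_{j,n}$; by Proposition \ref{VLT} the variance of each $\widetilde\Xi_{i,n}$ is of order $l_n Q_n$, and the super-block array still inherits the mixing bounds. Replacing $W_n(t)$ by the piecewise-constant super-block process amounts to a time-change from $v_n(t)$ to the nearest super-block endpoint; the corresponding uniform error is controlled using the maximal inequality on a super-block, giving rise precisely to the $l_n^{1/2}\sig_n^{-1}$ contribution (from the $L^2$ norm of a super-block) and the $l_n\sig_n^{-2(1-2/p_0)}$ contribution (from the $L^{p_0}$ concentration of a super-block via $\be_n$ and $\varpi_{p_0,p_0,n}$).

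\textbf{Step 3 (martingale coboundary and quadratic variation; the main obstacle).} Following the approach outlined in Section \ref{Sec1.4}, carry out a martingale coboundary representation $\widetilde\Xi_{i,n}=M_{i,n}+h_{i-1,n}-h_{i,n}$ with respect to the super-block filtration; because the super-block scale already has variance growing linearly in $M_n$, a construction in the spirit of \cite{MPU} applies. The main technical hurdle, which is the principal obstacle of the whole proof, is to control the quadratic variation $\sum_{i\le m}\bbE[M_{i,n}^2\mid\cG_{i-1}]$ uniformly in $m$ by the deterministic profile $\sum_{i\le m}\bbE[M_{i,n}^2]$ with a suitable rate. Here one iterates the maximal inequality \cite[Proposition 7]{MPU0} using both the $\varpi_{p_0,p_0,n}$ and the $\varpi_{p_0/2,p_0/2,n}$ bounds (the reason $\iota_n$ enters the definition of $A_{2,p_0}(n)$, and the corresponding $q_n$, $a_n$, $\be_n$ pieces enter $A_{1,p_0}(n)$), combined with a Kipnis--Varadhan type algebraic identity \cite{KV} adapted to the non-stationary setting. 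The output is an $L^{p_0/2}$ bound on the quadratic variation defect of order $A_{1,p_0}(n)\sig_n^{-(p_0-2)/(2p_0)}|\ln\sig_n|^{1/2}$.

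\textbf{Step 4 (martingale FCLT with rates and assembly).} Once the quadratic variation is close enough to its expectation, feed the triangular martingale $(M_{i,n})_{i\le M_n}$ into a martingale functional CLT with Prokhorov rates in $D[0,1]$ of Courbot type \cite{Courbot}, yielding the term $\sig_n^{-(p_0-2)/(2p_0)}|\ln\sig_n|^{3/4}$ (the extra logarithmic factor coming from Brownian modulus of continuity) together with an $l_n^{-1/2}$ error from the discrete-time martingale approximation on $M_n$ summands. Collecting the errors of Steps 1--4 via the triangle inequality for $d_P$, and absorbing the mixing sums and moment parameters into the single multiplicative constant $A_{1,p_0}(n)$, produces the bound \eqref{ProkEst}. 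The free parameter $l_n$ is not optimized in the statement so that one can tune it to the application; the three competing terms in $\mathfrak q_n$ reflect, respectively, the within-super-block fluctuation, the moment error from the coboundary construction, and the Donsker-type rate on $M_n$ super-blocks.
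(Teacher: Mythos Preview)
Your overall architecture---variance linearization via Proposition \ref{VLT}, a martingale coboundary, Courbot's rate \cite{Courbot}, and a quadratic-variation estimate driven by \cite[Proposition 7]{MPU0} together with the algebraic identity from \cite{KV}---matches the paper. The substantive divergence is the role of $l_n$. In the paper there is \emph{no} super-block coarsening of $W_n$: after the initial approximation (your Step 1, which is Corollary \ref{Cor Approx}), the coboundary is taken at the level of the individual $\xi_{k,n}$, namely $d_{k,n}=\xi_{k,n}+R_{k,n}-R_{k-1,n}$, and then grouped into block-level differences $D_{j,n}=\sum_{k\in B_j(n)}d_{k,n}$; the martingale $M_n(t)=\sum_{j\le j_n(t)}\cD_{j,n}$ thus has $k_n$ increments, and Courbot is applied to it directly. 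The parameter $l_n$ enters \emph{only} inside the quadratic-variation estimate (Lemma \ref{Lemm2.1}): one partitions $\{1,\dots,k_n\}$ into intervals $J_r$ of length $l_n$ and decomposes the QV defect into (i) the centered squared $\Xi$-sums $U_r$, (ii) a KV-type martingale piece $T_r=G_r-\sum_{j\in J_r}D_{j,n}^2$, (iii) the coboundary discrepancy $H_r-U_r$, and (iv) a within-$J_r$ residual $\cM_N$. All three terms of $\mathfrak q_n$ emerge here---$l_n^{1/2}\sig_n^{-1}$ from (i) and (ii), $l_n\sig_n^{-2(1-2/p_0)}$ from (iv), and $l_n^{-1/2}$ from (iii)---which is exactly why they all carry the common exponent $\frac{p_0}{2p_0+4}$, produced by $\tilde k_n\le\|\cdot\|_{p_0/2}^{p_0/(p_0+2)}$ composed with Courbot's $\tilde k_n^{1/2}$.

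In your scheme the bookkeeping breaks in two places. First, a genuine super-block coarsening of $W_n$ (your Step 2) would yield a Prokhorov error via Lemma \ref{ProkAssLemma} with exponent $p_0/(p_0+1)$, not $p_0/(2p_0+4)$, so the bound \eqref{ProkEst} as stated cannot be assembled from it. Second, your attribution of $l_n^{-1/2}$ to a ``Donsker-type rate on $M_n$ super-blocks'' is wrong: $M_n^{-1/2}\asymp l_n^{1/2}\sig_n^{-1}Q_n^{1/2}$, which is the \emph{other} term. The $l_n^{-1/2}$ actually comes from the telescoping of the coboundary on each $J_r$: $\sum_{j\in J_r}D_{j,n}-\sum_{j\in J_r}\Xi_{j,n}$ collapses to two remainders of size $\|R\|_{p_0,n}$, so $\|H_r-U_r\|_{p_0/2}\le C\|R\|_{p_0,n}\sqrt{l_n}$, and summing over $k_n/l_n$ intervals and dividing by $\sig_n^2\asymp k_nQ_n$ leaves $l_n^{-1/2}$. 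Relatedly, your Step 3 asserts a QV-defect bound of order $\sig_n^{-(p_0-2)/(2p_0)}$ \emph{independent of} $l_n$; obtaining any such bound is precisely the hard step, and the paper's answer is that the best one gets is $\mathfrak q_n$, with $l_n$ left free.
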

For uniformly mixing and bounded arrays we get the following result.
\begin{corollary}\label{Corr1}
Suppose that the triangular array $\{\xi_{j,n}:1\leq j\leq n\}$ is uniformly bounded, that $\sum_{j=1}^{\infty}(\al(j))^{1/p-1/q}<\infty$ for some $q>p>2$ and $\phi(m_0)<\frac12$ for some $m_0\in\bbN$, where $\al(\cdot)$ and $\phi(\cdot)$ are the uniform $\al$ and $\phi$ mixing coefficients of the array (see Section \ref{MixSec}). Then 
$$
d_P(W_n,B)=O(\sig_n^{-w_1(p)}),\,\, \text{where }w_1(p)\to 1/4 \,\text{ as }\, p\to\infty. 
$$
 Thus, if $\sum_j(\al(j))^{\del}<\infty$ for all $\del>0$ (e.g. $\al(j)$ decays stretched exponentially fast) and the array is uniformly bounded then
$$
d_P(W_n,B)=O(\sig_n^{-1/4+\epsilon})
$$
for all $\epsilon>0$.
\end{corollary}

Next, as discussed in Section \ref{Intro}, the rates in Corollary \ref{Corr1}  are not as good as the rates $O(\sig_n^{-1/2}\ln \sig_n)$ obtained in the independent case. To get closer to the latter rates, let us introduce the following ``memory" coefficients:
\begin{equation}\label{r def}
r_n(p,m)=\max_{m\leq j\leq n}\left\|\sum_{s=j+1}^{n}\left(\bbE[\xi_{s,j}|\zeta_{1,n},...,\zeta_{j,n}]-\bbE[\xi_{s,j}|\zeta_{j-m,n},...,\zeta_{j,n}]\right)\right\|_{p}.
\end{equation}
 If $\{\zeta_{j,n}: 1\leq k\leq n\}$ is a  Markov chain with memory $m_n$ then $r_n(p,m_n)=0$ for all $p$. In general,
$r_n(p,m)$ measures how close $\{\xi_{j,n}: 1\leq j\leq n\}$ is to be a hidden Markov chain with memory $m$ in the $L^{p}$-norm (see also Remark \ref{Re} below).

\begin{theorem}\label{FuncCLT MC}
Assume that $\lim_{n\to\infty}\sig_n=\infty$ and that $A_n=o(\sig_n)$.
Let $l_n$ be a sequence of positive integers so that $l_n\leq\frac{\sig_n^2}{18 A_n^2}$. Then there is a constant $C_p$ so that
\begin{equation}\label{ProkEst3}
d_P(W_n,B)\leq C_{p}C(n;q)\left(\sig_n^{-\frac{p-2}{2p}}|\ln \sig_n|^{3/4}+
w_n^{\frac{p}{2p+4}}|\ln w_n|^{1/2}\right) 
\end{equation}
where $w_n=l_n\sig_n^{-2(1-2/p)}+l_n^{1/2}\sig_n^{-1}+r_n(p,[l_n/2])l_n^{-1/2}$.
\end{theorem}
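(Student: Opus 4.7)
The plan is to follow the architecture of the proof of Theorem \ref{FunCLT}, modifying only the martingale–coboundary approximation of the linearized array by replacing the full conditional expectations with projections onto a finite-memory window of length $m = [l_n/2]$. First, I would invoke Proposition \ref{VLT} to write $S_n = \sum_{k=1}^{k_n}\Xi_{k,n}$ with $\Xi_{k,n} = \sum_{j \in B_k(n)}\xi_{j,n}$, $k_n \asymp \sig_n^2/Q_n$, and $Q_n \le \|\Xi_{k,n}\|_2^2 \le 9Q_n$. The block process $\widetilde W_n(t) = \sig_n^{-1}\sum_{k \le u_n(t)}\Xi_{k,n}$, with $u_n$ the natural block analogue of $v_n$, is sup-close to $W_n$ up to the maximal within-block fluctuation, which by Assumption \ref{MaxMom}, a union bound over the $k_n$ blocks, and truncation at level $|\ln\sig_n|$ contributes the first term $\sig_n^{-(p_0-2)/(2p_0)}|\ln\sig_n|^{3/4}$ of \eqref{ProkEst3}, exactly as in Theorem \ref{FunCLT}.

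For the block array, I would then construct a martingale coboundary of the Gordin type,
\[
\Xi_{k,n} = D_{k,n} + \widetilde U_{k-1,n} - \widetilde U_{k,n} + R_{k,n},
\]
where $\widetilde U_{k,n} = \sum_{s>k}\bbE[\Xi_{s,n}\mid \zeta_{\tau_k-m,n},\dots,\zeta_{\tau_k,n}]$ is a finite-memory conditional expectation, $\tau_k$ is the right endpoint of the block $B_k(n)$, $D_{k,n}$ are the associated martingale differences with respect to the filtration generated by these windows, and $R_{k,n}$ is the truncation residual. The definition of $r_n(p_0,m)$ is essentially the $L^{p_0}$-bound for the telescoped sum of the $R_{k,n}$, so after dividing by $\sig_n$ and combining with the Gaussian approximation error at window scale $l_n$ (which carries the $l_n^{-1/2}$ inherited from the same estimates as in Theorem \ref{FunCLT}), one obtains the summand $r_n(p_0,[l_n/2])\,l_n^{-1/2}$ in $w_n$, replacing the unadorned $l_n^{-1/2}$ in $\mathfrak q_n$. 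The remaining two terms $l_n\sig_n^{-2(1-2/p_0)}$ and $l_n^{1/2}\sig_n^{-1}$ of $w_n$ arise exactly as in Theorem \ref{FunCLT}, from comparing $\langle M\rangle_{u_n(t)}$ with its deterministic proxy and from the gap between $\|\Xi_{k,n}\|_{p_0}$ and $\|\Xi_{k,n}\|_2$.

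With the coboundary in place, I would bound $d_P(\widetilde W_n, B)$ through a Prokhorov-type comparison theorem for martingale arrays in the spirit of \cite[Theorem 4]{Courbot}, which requires tight control of $\|\langle M\rangle_{u_n(t)} - \sig_n^2 t\|$. This would be supplied by several applications of the maximal inequality \cite[Proposition 7]{MPU0} together with the Kipnis--Varadhan style rearrangement sketched in Section \ref{Sec1.4}. The pairs of differences $D_{j,n}, D_{k,n}$ whose supporting windows overlap, i.e.\ $|\tau_j - \tau_k| \le m$, contribute covariances estimated via the $p_0/2$-mixing constants $\varpi_{p_0/2,p_0/2,n}(\cdot)$, and this is the origin of the additional $\iota_n$-containing summands in $A_{2,p_0}(n)$ beyond $A_{1,p_0}(n)$. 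An initial approximation step, based on the maximal inequality \cite[Theorem 6.17]{PelBook}, handles the boundary error terms coming from extending block-indexed processes to the full time $[0,1]$.

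The hardest step I expect is this quadratic-variation control. Because each $D_{k,n}$ has support in a window of length $m \asymp l_n$ of $\zeta_{\cdot,n}$-variables, blocks whose window-supports overlap can have non-negligible covariance, and one must show that the cross-terms aggregate to at most an $\iota_n^2\sum_k\|D_{k,n}\|_2^2$-type bound uniformly in the free parameter $l_n$. Once this estimate is established, substituting $w_n$ into the Prokhorov–martingale bound yields \eqref{ProkEst3}; optimising $l_n$ within the envelope $l_n\le\sig_n^2/(18Q_n)$ then recovers the near-$\sig_n^{-1/2+\ve}$ rates quoted in the discussion after the theorem, whenever $r_n(p_0,\cdot)$ decays sufficiently fast and $A_{2,p_0}(n) = O(\sig_n^{\ve})$.
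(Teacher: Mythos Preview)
Your overall architecture—variance linearization via Proposition \ref{VLT}, initial sup-norm approximation of $W_n$ by the block process, martingale–coboundary representation, then Courbot's bound \cite[Theorem 4]{Courbot} with quadratic-variation control through Proposition \ref{Prp7}—matches the paper exactly, and your identification of the two common summands $l_n\sig_n^{-2(1-2/p_0)}$ and $l_n^{1/2}\sig_n^{-1}$ of $w_n$ is correct.

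Where you diverge from the paper is in the \emph{placement} of the finite-memory truncation. You propose to build the martingale differences $D_{k,n}$ themselves from finite-window projections $\widetilde U_{k,n}$. The paper instead keeps the \emph{full-memory} martingale $D_{j,n}=\sum_{k\in B_j(n)}(\xi_{k,n}+R_{k,n}-R_{k-1,n})$ with $R_{k,n}=\sum_{s>k}\bbE[\xi_{s,n}\mid\cF_{0,k}^{(n)}]$ (Lemma \ref{M Lemma}), identical to the one used for Theorem \ref{FunCLT}. The finite-memory object $R_{j,n}^{(l)}=\sum_{s>j}\bbE[\xi_{s,n}\mid\zeta_{j-[l/2],n},\dots,\zeta_{j,n}]$ appears only inside the quadratic-variation estimate, in Lemma \ref{EstL}(ii). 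There it is used to replace $H_r-U_r$ by $X_r:=H_r^{(l)}-U_r$, which is a function of $\{\Xi_{j,n}:j\in J_r-\{0,\dots,[l/2]\}\}$ and hence \emph{local} in the block index $r$. This locality is exactly what permits a second application of Proposition \ref{Prp7} (now with the $\varpi_{p_0/2,p_0/2,n}$ coefficients, which is where the $\iota_n$ factors in $A_{2,p_0}(n)$ come from) to upgrade the crude $N/l$ bound of Lemma \ref{EstL}(i) to a $\sqrt{N/l}$ bound, producing the term $r_n(p_0,[l_n/2])l_n^{-1/2}$ in $w_n$.

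Your route has a genuine obstacle: with $\widetilde U_{k,n}$ defined by finite-window conditioning, the differences $\Xi_{k,n}+\widetilde U_{k,n}-\widetilde U_{k-1,n}$ are \emph{not} martingale differences with respect to $\cF_{0,\tau_k}^{(n)}$, because the tower identity $\bbE[\widetilde U_{k,n}\mid\cF_{0,\tau_{k-1}}^{(n)}]+\bbE[\Xi_{k,n}\mid\cF_{0,\tau_{k-1}}^{(n)}]=\widetilde U_{k-1,n}$ fails once the inner conditioning is truncated. You can absorb the defect into your residual $R_{k,n}$, but then the martingale part reverts to the full-memory one and you have gained no locality for the $D_{k,n}$; the $r_n$ contribution then enters only through a telescoping coboundary (size $O(r_n(p_0,m)\sig_n^{-1})$, not $r_n(p_0,m)l_n^{-1/2}$), and Courbot's theorem still requires the full quadratic-variation analysis. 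The paper's device—truncate \emph{late}, at the level of the auxiliary blocks $G_r$ inside the quadratic-variation estimate—sidesteps this, since at that stage there is no martingale property to preserve and the only goal is to manufacture a local sequence to which Proposition \ref{Prp7} applies.
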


As discussed in Section \ref{Intro},
the best rate obtained for independent arrays is $O(\sig_n^{-1/2}\ln \sig_n)$, which is significantly better than $O(\sig_n^{-1/4+\epsilon})$. For Markov chains we will be able to get such better rates, and one of the consequences of our general result is the following:
\begin{corollary}\label{Corr2}
Suppose $\{\zeta_{j,n}:1\leq j\leq n\}$ is a triangular array of Markov chains\footnote{Namely $\bbP(\zeta_{j,n}\in A|\zeta_{1,n},...,\zeta_{j-1,n})=\bbP(\zeta_{j,n}\in A |\zeta_{j-1,n})$ for all $n$ and $j$ and a measurable set $A$.} and let $\xi_{j,n}=g_{j,n}(\zeta_{j,n})$ for some array $g_{j,n}$ of uniformly bounded functions in $L^q$. Then, under the same mixing conditions as in Corollary \ref{Corr1} we have 
$$
d_P(W_n,B)=O(\sig_n^{-w_2(p)}),\,\, \text{where }w_2(p)\to 1/2 \,\text{ as }\, p\to\infty. 
$$
 If $\sum_j(\al(j))^{\del}<\infty$ for all $\del>0$ and the array is uniformly bounded then
$$
d_P(W_n,B)=O(\sig_n^{-1/2+\epsilon})
$$
for all $\epsilon>0$.
\end{corollary}

We refer to Section \ref{SecDob} for an application of Theorem \ref{FuncCLT MC} for non-uniformly mixing Markov chains and summands $\xi_{j,n}$ which are not necessarily uniformly bounded in $L^q$. We also refer to Remark \ref{RR} for an applications of Theorem \ref{FuncCLT MC} for additive functional of non-uniformly mixing triangular arrays of Markov chains with memory $m_n$ for some $m_n=o(\sig_n)$ (which is applicable to the covariance estimators in \cite{FZ} in the Markovian case).
\begin{remark}
In both theorems the function $w_i(p), i=1,2$ has an explicit form.
\end{remark}

\begin{remark}\label{Re}
Let us also consider
$$
c_n(p,m)=\sup_{m\leq j\leq n}\sup_{s>j}\left\|\bbE[\xi_{s,n}|\zeta_{1,n},...,\zeta_{j,n}]-\bbE[\xi_{s,n}|\zeta_{j-m,n},...\zeta_{j-1,n},\zeta_{j,n}]\right\|_{p}.
$$
Then by \eqref{Var po rel},
\begin{equation}\label{cbn.11}
r_n(p,m)\leq mc_n(p,m)+CK_{q,n}\sum_{m<k\leq n}\varpi_{q,p,n}(k)
\leq mc_n(p,m)+2CK_{q,n}\sum_{m<k\leq n}(\al_n(k))^{1/p-1/q}.
\end{equation}
Therefore, Theorem \ref {FuncCLT MC} yields upper bounds on $d(W_n,B)$ in terms of the ``memory" coefficients $c_n(p_0,\cdot)$ which are more local in nature, together with the ''tails" $\sum_{k>m}(\al_n(k))^{1/p-1/q}.$
\end{remark}

\subsection{Additional results}
The following result provides close to optimal rates in the non-functional CLT.

\begin{theorem}\label{BEthm}
Suppose that $\lim_{n\to\infty}\sig_n=\infty$ and that  $q\in(2,3]$.
\vskip0.2cm 
(i)If $\al_n(j)\leq B_n j^{-a_n}$ for some $B_n,a_n>1$ then  
$$
\sup_{t\in\bbR}|\bbP(S_n/\sig_n\leq t)-\Phi(t)|\leq R_n(1+(1-a_n)^{-1/2})\sig_n^{-(1-2\gamma_n)}
$$
where $\Phi$ is the standard normal distribution function,
$$
\gamma_n=\frac{q-1}{a_n(q-2)(2q)^{-1}+2(q-1)}
$$
and
 $R_n=O\left(\beta_n^3B_n^3A_n^3\right)$.
\vskip0.2cm 
(ii) If there exist $\ka_n>0$ and $B_n\geq 1$ so that $\al_n(j)\leq B_ne^{-\ka_n j}$  then 
$$
\sup_{t\in\bbR}|\bbP(S_n/\sig_n\leq t)-\Phi(t)|\leq R_n(1+\ka_n^{-3})\sig_n^{-1}\ln^3(\sig_n).
$$
\end{theorem}

\begin{corollary}\label{Cor BE}
Suppose that $\lim_{n\to\infty}\sig_n=\infty$ and that  $q\in(2,3]$. Moreover, assume that $\sup_n\max_j\|\xi_{j,n}\|_{\infty}<\infty$ and that $\phi(m_0)<\frac12$ for some $m_0\in\bbN$. Then: 

\vskip0.1cm
(i)If $\al_n(j)\leq B j^{-a}$ for some constants $B,a>1$ then  
$$
\sup_{t\in\bbR}|\bbP(S_n/\sig_n\leq t)-\Phi(t)|=O(\sig_n^{-(1-2\gamma)})
$$
where
$
\gamma=\gamma_a=\frac{q-1}{a(s-2)(2s)^{-1}+2(q-1)}.
$

\vskip0.2cm 
(ii) If there exist constants  $\ka>0$ and $B>1$ so that $\al_n(j)\leq e^{-\ka j}$  then 
$$
\sup_{t\in\bbR}|\bbP(S_n/\sig_n\leq t)-\Phi(t)|=O(\sig_n^{-1}\ln^3(\sig_n)).
$$
\end{corollary}
In Sections \ref{SecDob} and \ref{Local} we will apply Theorem \ref{BEthm} for specific examples of non-uniformly mixing triangular arrays for which $R_n$ and either $(1-a_n)^{-1/2}$ or $\ka_n^{-3}$ grow slower than $\sig_n^{a}$ for $a<1/2$.

Our next result concerns large deviations. 
\begin{theorem}\label{MDPthm}
Suppose that $\sig_n\to\infty$, that $\xi_{j,n}$ are bounded and that  $\phi_n(j)\leq B_ne^{-a_nj^\eta}$ for some sequences $a_n>0$, $B_n\geq1$ such that $(a_n)$ is bounded and a constant $\eta>0$. Fix an arbitrary $u>1$ and set
$$
R_n=A_n(K_{\infty,n}j_n+1)B_ne^{a_n^{-u}}
$$
where $K_{\infty,n}=\max_{j\leq n}\|\xi_{j,n}\|_{L^\infty}$ and  $A_n$ was defined in \eqref{A_n def}. In addition, assume that $R_n=o(\sig_n^{1/3})$.
Let $t_n$ be a sequence so that $\lim_{n\to\infty}t_n=\infty$ and $\lim_{n\to\infty}t_n(R_n^{-3}\sig_n)^{-\frac{1}{1+2\gamma}}=0$, $\gamma=\eta^{-1}$. Set
 $W_n=\frac{S_n}{\sig_n t_n}$. Then $W_n$ obeys
the following moderate deviations principle (MDP):
for every  Borel measurable set $\Gam\subset\bbR$,
\[
-\inf_{x\in\Gam^o}I(x)\leq \liminf_{N\to\infty}\frac1{t_n^2}\ln\bbP(W_n\in\Gamma)\leq  \limsup_{n\to\infty}\frac1{t_n^2}\ln\bbP(W_n\in\Gamma) \leq-\inf_{x\in\bar\Gam}I(x)
\]
where $\Gam^o$ denotes the interior of a set $\Gam$, $\bar\Gam$ denotes its closure and $I(x)=x^2/2$.
\end{theorem}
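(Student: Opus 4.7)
The plan is to reduce to a sum with linearly growing variance via the variance linearization of Proposition \ref{VLT}, and then apply the classical method of cumulants of Saulis and Statulevi\v cius (\cite{SaulStat}). First, I would use Proposition \ref{VLT} to decompose
$$
S_n = \sum_{j=1}^{k_n} \Xi_{j,n}, \qquad \Xi_{j,n} = \sum_{k \in B_j(n)}\xi_{k,n},
$$
with $Q_n \leq \|\Xi_{j,n}\|_2^2 \leq 9 Q_n$ and $Q_n k_n \leq \sigma_n^2 \leq 18 Q_n k_n$. In particular the variance of $\sum_{j=1}^{m} \Xi_{j,n}$ grows linearly in $m$ with rate $Q_n$, which is the sense in which we have ``linearized'' the variance.

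I would then verify that $\{\Xi_{j,n}\}_{j=1}^{k_n}$ satisfies the hypotheses needed for the method of cumulants. Since $\|\xi_{k,n}\|_\infty \leq K_{\infty,n}$, the sup-norm $\|\Xi_{j,n}\|_\infty$ is bounded by $|B_j(n)|\,K_{\infty,n}$, and the block length $|B_j(n)|$ can itself be bounded in terms of $Q_n/K_{\infty,n}^2$ using the definition of $Q_n$ together with the summability of $\rho_n(\cdot)$ through the radius $r_n$. Using \eqref{Pel} with $\varepsilon=1/6$ (applicable thanks to $\phi_n(j_n)\leq 1/3$), the block sums moreover obey an $L^p$ bound of the form $\|\Xi_{j,n}\|_p \leq C p \sqrt{Q_n}\,(K_{\infty,n}+j_n+1)$ for each $p\geq 2$. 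Finally, because the blocks $B_j(n)$ are disjoint intervals of $\{1,\dots,n\}$, the $\phi$-mixing coefficients $\tilde\phi_n(m)$ of the block array are dominated by $\phi_n$ evaluated at the gap between $B_j(n)$ and $B_{j+m}(n)$, and hence still decay stretched exponentially with exponent $\eta$, with a constant prefactor controlled by $A_n$.

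Next, apply the Saulis--Statulevi\v cius cumulant estimates to the block array. For a sum of bounded $\phi$-mixing random variables with stretched exponential mixing of exponent $\eta$, these yield, for every $r\geq 2$, a bound of the form
$$
\bigl|\Gamma_r(S_n/\sigma_n)\bigr| \leq (r!)^{1+\gamma}\,\Delta_n^{-(r-2)}, \qquad \gamma = 1+\eta^{-1},
$$
where $\Delta_n$ is explicit in terms of $\sigma_n$ and the quantities $Q_n^{1/2}$, $K_{\infty,n}$, $j_n$, $A_n$ packaged into $R_n$; a careful bookkeeping of constants identifies $\Delta_n$ as being of the order of $R_n^3\sigma_n$ (the exponent $3$ arising from combining the $L^\infty$ bound on $\Xi_{j,n}$, the $\phi$-mixing prefactor $A_n$, and the $L^2$ scale $Q_n^{1/2}$). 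Plugging into the classical MDP-from-cumulants transfer (see e.g.\ \cite{SaulStat} and \cite{DemZet}), the moderate deviations principle holds with speed $a_n^2$ and rate function $I(x)=x^2/2$ whenever $a_n\to\infty$ and $a_n = o(\Delta_n^{1/(1+2\gamma)})$, which is exactly the stated assumption on $a_n$.

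The main obstacle lies in this last quantitative step: one must propagate the non-uniformity in $n$ of the original mixing and moment data ($A_n$, $K_{\infty,n}$, $j_n$) through the block construction and into the Saulis--Statulevi\v cius bounds, verifying that they combine with the $L^2$ scale $Q_n^{1/2}$ into precisely the factor $R_n^3\sigma_n$ appearing in the hypothesis. Once this quantitative version of the cumulant bound is in hand, the MDP follows by a routine application of the cumulants-to-MDP transfer theorem.
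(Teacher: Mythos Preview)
Your proposal is correct and follows essentially the same route as the paper: variance linearization via Proposition \ref{VLT}, $L^p$ bounds on the block sums through \eqref{Pel}, inheritance of the $\phi$-mixing rate by the block array, a Saulis--Statulevi\v cius type cumulant bound of the form $|\Gamma_k(S_n/\sigma_n)|\leq (CR_n)^k(k!)^{1+\gamma}\sigma_n^{-(k-2)}$, and finally the cumulants-to-MDP transfer. The only difference is in how the cumulant bound itself is derived: the paper does not invoke a black-box $\phi$-mixing cumulant estimate but instead proves the key multiplicative mixing inequality \eqref{MixGorc} directly, using the interlaced $\alpha$-mixing bound of Proposition \ref{PropMix} together with the covariance inequality \eqref{alpha cov HallHyde}, and then feeds this into \cite[Corollary 3.2]{Ha}; the MDP transfer is quoted from \cite{Dor} rather than \cite{SaulStat}. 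Both routes yield the same cumulant estimate (Proposition \ref{Prp}) and hence the same MDP.
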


\begin{corollary}
If $\phi(j)\leq Be^{-aj^\eta}$ for some constants $a>0$ and $B\geq1$ and $\sup_n\max_j\|\xi_{j,n}\|_{L^\infty}<\infty$ then  for every sequence $t_n$  so that $\lim_{n\to\infty}t_n=\infty$ and $\lim_{n\to\infty}t_n(\sig_n)^{-\frac{1}{1+2\gamma}}=0$ we have the following:
for every  Borel measurable set $\Gam\subset\bbR$,
\[
-\inf_{x\in\Gam^o}I(x)\leq \liminf_{n\to\infty}\frac1{t_n^2}\ln\bbP(W_n\in\Gamma)\leq  \limsup_{n\to\infty}\frac1{t_n^2}\ln\bbP(W_n\in\Gamma) \leq-\inf_{x\in\bar\Gam}I(x).
\]
\end{corollary}

In Section \ref{SecDob} we will apply Theorem \ref{MDPthm} for specific examples of triangular arrays where $R_n\to\infty$  and $B_n\to\infty$ while $a_n$ might decay to $0$. 

As a by product of the proof of Theorem \ref{MDPthm}, we are also able to prove the following type of Gaussian moment estimates.
\begin{theorem}\label{RoseIneq}
Under the conditions of Theorem \ref{MDPthm} we have the following.
Let $Z$ be a standard normal random variable. Then there is a constant $C>0$ so that for every integer $p>2$ we have
$$
\left|\bbE[S_n^p]-\sig_n^p\bbE[Z^p]\right|\leq R_nC^p(p!)^{2+\eta^{-1}}\sum_{1\leq u\leq (p-1)/2}\frac{R_n^u\sig_n^{2u}p^u}{(u!)^2}=O(R_n^p\sig_n^{p-1}).
$$
Thus, if $R_n^p/\sig_n=o(1)$ we have $\lim_{n\to\infty}\bbE[(S_n/\sig_n)^p]=\bbE[Z^p]$. In particular, the latter convergence holds for all $p$  if $R_n=o(\sig_n^{\ve})$ for all $\ve>0$ (which is the case\footnote{In this case $R_n$ is bounded.} when $\sup_{n}\max_j\|\xi_{j,n}\|_\infty<\infty$ and $\phi(j)\leq Ae^{-aj^\eta}$ for some positive constants $A,a,\eta$).
\end{theorem}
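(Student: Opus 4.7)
The strategy is to invoke the classical moment--cumulant identity together with the cumulant estimates produced by the method of cumulants applied after variance linearization. First, I would apply Proposition~\ref{VLT} to decompose $S_n=\sum_{j=1}^{k_n}\Xi_{j,n}$ with $k_n\asymp\sig_n^2/Q_n$ and $\|\Xi_{j,n}\|_2^2\in[Q_n,9Q_n]$. Under the hypotheses of Theorem~\ref{MDPthm}, the variables $\xi_{j,n}$ are uniformly bounded and $\phi_n(j_n)<1/6$ for the $j_n$ from the statement, so the maximal inequality \eqref{Pel} gives that each block $\Xi_{j,n}$ is controlled, up to a $p$--dependent constant, by a multiple of $R_n=Q_n^{1/2}(K_{\infty,n}+j_n+1)A_n$. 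The block array $\{\Xi_{j,n}\}$ inherits the stretched--exponential $\phi$--mixing of $\{\xi_{j,n}\}$, with a gap of at least one step between consecutive blocks.

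Next, I would invoke the Saulis--Statulevi\v{c}ius type cumulant bounds (\cite{SaulStat}) for $\phi$--mixing arrays with stretched exponential rate $\eta$: writing $\kappa_k(S_n)$ for the $k$--th cumulant of $S_n$, one obtains
$$|\kappa_k(S_n)|\leq k_n\,C^k(k!)^{1+\eta^{-1}}R_n^k,\qquad k\geq 2,$$
while $\kappa_1(S_n)=0$ and $\kappa_2(S_n)=\sig_n^2$ exactly. These estimates are precisely what drives Theorem~\ref{MDPthm}, so they come for free from that argument. Plug them into the moment--cumulant identity
$$\bbE[S_n^p]=\sum_{\pi\in\cP(p)}\prod_{B\in\pi}\kappa_{|B|}(S_n),$$
where $\cP(p)$ denotes the set of set--partitions of $\{1,\ldots,p\}$. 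For a centered Gaussian $\sig_n Z$ only pair partitions survive and reproduce $\sig_n^p\bbE[Z^p]$; since $\kappa_1(S_n)=0$ kills every partition containing a singleton, the difference $\bbE[S_n^p]-\sig_n^p\bbE[Z^p]$ equals the sum over partitions with no singletons and at least one block of size $\geq 3$.

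The remaining step is combinatorial bookkeeping. I would group such partitions by the number $u$ of blocks of size $\geq 3$, say with sizes $k_1,\ldots,k_u\geq 3$ summing to $m$, leaving $v=(p-m)/2$ pair blocks. Using $\kappa_2=\sig_n^2$ for the pairs and the cumulant bound on the remaining blocks, the contribution of such a partition is at most $k_n^uR_n^m\sig_n^{p-m}\prod_iC^{k_i}(k_i!)^{1+\eta^{-1}}$, with multinomial count $p!/\big(u!\,v!\,2^v\prod_ik_i!\big)$. Substituting $k_n\leq\sig_n^2/Q_n$ and using the identity $R_n^2=Q_n(K_{\infty,n}+j_n+1)^2A_n^2$ to trade factors of $Q_n^{-1}$ against factors of $R_n$, then summing over the admissible $k_1,\ldots,k_u$ with $3u\leq m\leq p$, one obtains an upper bound of the claimed form $R_nC^p(p!)^{2+\eta^{-1}}\sum_{u=1}^{(p-1)/2}R_n^u\sig_n^{2u}p^u/(u!)^2$; the range of $u$ reflects the constraints $m\ge 3u$ and $m\le p-1$ (since $u\ge1$). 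The final $O(R_n^p\sig_n^{p-1})$ estimate then follows by bounding the sum by $p$ times its largest term, achieved near $u=(p-1)/2$. I expect the main obstacle to be this last combinatorial accounting, namely matching exponents of $R_n$, $\sig_n$ and $Q_n$ and producing the precise $p^u/(u!)^2$ factor from the partition count; the cumulant bound itself is classical under stretched--exponential $\phi$--mixing, and variance linearization is exactly the mechanism that turns the abstract number of summands $k_n$ in the Saulis--Statulevi\v{c}ius bound into the explicit quantity $\sig_n^2/Q_n$ needed for the target estimate.
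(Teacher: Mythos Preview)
Your proposal is correct and follows essentially the same route as the paper: the paper's proof consists of the single sentence that it ``proceeds similarly to the proof of \cite[Theorem 3]{Douk2} or \cite[Theorem 6.3]{Ha}'', i.e., one feeds the cumulant bounds of Proposition~\ref{Prp} (obtained after variance linearization) into the moment--cumulant identity and does the partition bookkeeping you describe. One small discrepancy: the cumulant bound you write, $|\kappa_k(S_n)|\leq k_n\,C^k(k!)^{1+\eta^{-1}}R_n^k$, has factorial exponent one lower than the paper's Proposition~\ref{Prp}, which gives $\Gamma_k(S_n/\sig_n)\leq (CR_n)^k(k!)^{2+\eta^{-1}}\sig_n^{-(k-2)}$; the extra factor of $k!$ there (coming from the combinatorics in \cite[Corollary 3.2]{Ha}) is what produces the $(p!)^{2+\eta^{-1}}$ in the target inequality, so you should use the paper's stated cumulant bound rather than the raw Saulis--Statulevi\v{c}ius form.
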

When $R_n^p=o(\sig_n)$ for all $p$ this theorem yields the CLT via the method of moments, and so it can also be viewed as a certain type of quantified version of the CLT (explicit examples when $R_n\to\infty$ at a given rate will be discussed in Section \ref{SecDob}).

\subsection{Additional remarks }

\begin{remark}
In Section \ref{SecDob} we will discuss applications to non-uniformly contracting Markov chains which have non-uniform mixing rates, and for functionals $\xi_{j,n}=g_{j,n}(\zeta_{j,n})$ which are not uniformly bounded in $L^q$ for some $q$.  
\end{remark}

\begin{remark}[Linear growth case]\label{R beta 1}
As mentioned in Remark \ref{R beta}, when the variance $\sig_n^2$ grows linearly fast in $n$ our proofs proceed without using $\beta_n$ defined through \eqref{MaxMom}. This means that in the case of linear growth all the results hold true after (technically) setting $\beta_n=K_{q,n}$ and $A_n=1$. Hence, in that case there is no need in using the $\phi$-mixing coefficients since they are only used in this paper in order to provide effective upper bounds on $\beta_n$.
In particular, when $\xi_{j,n}=\xi_j$ forms a stationary sequence we get the  rates $\sig_n^{-w_i(p)}$ when $\sum_{n}(\al(n))^{1/p-1/q}<\infty$ and $\xi_{j,n}=\xi_j\in L^q$. Let us also note that, for the best of our knowledge, close to optimal functional CLT rates were not obtained previously even in the stationary case.
\end{remark}

\section{Applications to non-uniformly contracting Markov chains}\label{SecDob}
In this section we will analyze our results in a particular classical example: contracting Markov chains (in the sense of  Dobrushin). Since we will be considering non-uniformly contracting Markov chains, the resulting arrays will be non-uniformly mixing, and hence the results in this section also address the question of concrete examples of such arrays which fit the general framework described in Section \ref{Main} (see section \ref{Local} for more examples).

Let us begin with introducing the setup. We us assume that for each $n$ the finite sequence $\{\zeta_{j,n}:\,1\leq j\leq n\}$ forms a Markov chain. Let us denote by $\pi_i^{(n)}(x,A)=\bbP(\zeta_{i+1.n}\in A|\zeta_{i,n}=x)$ the corresponding regular conditional probabilities. Recall that the Dobrushin contraction coefficient of a regular family of conditional probabilities $\pi(x,A)$ is given by 
$$
\del(\pi)=\sup_{x,y}\|\pi(x,\cdot)-\pi(y,\cdot)\|_{TV}.
$$
Let $\del_n=\max\{\del(\pi_i^{(n)}): 1\leq i<n\}$ and $C_n=K_{\infty,n}=\max\{\|\xi_{j,n}\|_{L^\infty}: 1\leq j\leq n\}$. Then the classical Dobrushin's CLT states that  $S_n/\sig_n$ obeys the CLT if $\sig_n\to\infty$ and 
$$
\lim_{n\to\infty}C_n(1-\del_n)^3\sig_n^2=\infty
$$
(which is an optimal condition).

\subsection{On the growth rates of $\sig_n^2$ and $v_n(t)$}\label{SecVarDob}
In this section we discuss possible behaviors of $\sig_n^2$ and $v_n(t)$. 
First, by combining \cite[Proposition 13]{Pel} with \cite[Lemma 4.1]{VarSeth} we see that for each $n$ and all $1\leq \ell<k\leq n$ we have
\begin{equation}\label{Var}
\frac{1-\sqrt{\del_n}}{1+\sqrt{\del_n}}\sum_{j=\ell}^k\text{Var}(\xi_{j,n})\leq \text{Var}\left(\sum_{j=\ell}^k \xi_{j,n}\right)\leq \frac{1+\sqrt{\del_n}}{1-\sqrt{\del_n}}\sum_{j=\ell}^k\text{Var}(\xi_{j,n}).
\end{equation}

In view of \eqref{Var} (applied with $\ell=1$ and $k=n$), by imposing conditions on $\text{Var}(\xi_{j,n})$ we see that $\sig_n^2$ can have an arbitrary growth rate.  Moreover, \eqref{Var} can be used to estimate $v_n(t)$, and to provide examples where $v_n(t)$ is not essentially a linear function of $t$ or $n$, in contrast with the stationary case. For the sake of clarity, some results in this direction are formulated in the following:

\begin{lemma}\label{Lvar}
(i) Suppose that $\sup_n\del_n<1$. 
\vskip0.1cm
(1) If $\text{Var}(\xi_{k,n})\asymp k^{-\gamma}$ for some $\gamma<1$  then   $\sig_n^2\asymp n^{1-\gamma}$ and 
$$
v_n(t)=C^{\pm 1}t^{\frac 1{1-\gamma}}n=C^{\pm 1}(t\sig_n^2)^{\frac 1{1-\gamma}}
$$
where $a=C^{\pm 1}b$ means that $C^{-1}b\leq a\leq Cb$ for some constant $C>0$ which does not depend on $a$ or $b$ and $v_{k,n}\asymp a_k$ means that $v_{k,n}/a_k$ is bounded and bounded away from $0$ uniformly in $k$ and $n$.
\vskip0.1cm
(2)   If $\text{Var}(\xi_{k,n})\asymp k^{-1}$ then   $\sig_n^2\asymp \ln n$ and  
$$
 n^{C_1t}\leq v_n(t)\leq n^{C_2t}
$$
for some constants $C_1,C_2>0$.

\vskip0.2cm
(ii) Let $\ve_n=1-\del_n$ and suppose $\ve_n>0$ with $\ve_n\to0$.  
\vskip0.1cm
(1) If $\text{Var}(\xi_{k,n})\asymp k^{-\gamma}$ for some $0<\gamma<1$ then  
$$
C_2n^{1-\gamma}\ve_n\leq \sig_n^2\leq C_2n^{1-\gamma}(\ve_n)^{-1}
$$
and
$$
 C_1 t^{\frac 2{1-\gamma}}n(\ve_n)^{\frac 2{1-\gamma}}\leq v_n(t)\leq C_2 t^{\frac 1{1-\gamma}}n(\ve_n)^{-\frac 2{1-\gamma}}
$$
where $C_1,C_2$ are positive constants.
\vskip0.1cm
(2)   If $\text{Var}(\xi_{k,n})\asymp k^{-1}$ then   $c_1\ve_n \ln n\leq \sig_n^2\leq c_2\ve_n^{-1}\ln n$ and  
$$
 n^{C_1\ve_n^2 t}\leq v_n(t)\leq n^{C_2\ve_n^{-2}t}
$$
for some constants $c_1,c_2,C_1,C_2>0$.

\end{lemma}
\begin{proof}
Consider first the case when $\sup_n\del_n<1$. Then by \eqref{Var} we have $\sig_{k,n}^2=C^{\pm 1}\sum_{j=1}^k V_k(n)$, where $V_k(n)=\text{Var}(\xi_{k,n})$. Combining this with  the definition of $v_n(t)$ we see that
\begin{equation}\label{FF0}
C_1t\sig_n^2\leq  \sum_{j=1}^{v_n(t)} V_j(n)\leq C_2t\sig_n^2+V_{v_n(t)}(n)
\end{equation}
for some constants $C_1,C_2>0$ which depend only on $\sup_n\del_n$. Now, consider the case when $V_k(n)\asymp k^{-\gamma}$ for $\gamma\in(0,1)$.  Then 
we get from \eqref{FF0} that 
$$
(v_n(t))^{1-\gamma}=C^{\pm 1}t\sig_n^2.
$$
Now,  by \eqref{Var} we have $\sig_n^2\asymp n^{1-\gamma}$  
and so 
$$
v_n(t)=C^{\pm 1}(t\sig_n^2)^{\frac 1{1-\gamma}}=C^{\pm 1}t^{\frac 1{1-\gamma}}n.
$$
When $V_k(n)\asymp k^{-1}$ then we conclude from \eqref{Var} that $\sig_n\asymp \ln n$ and hence we derive from \eqref{FF0} that
$$
C_1 t\ln n \leq \ln v_n(t)\leq C_2t\ln n.
$$

When $\del_n$ is not uniformly bounded away from $1$, we similarly get from \eqref{Var} and the definition of $v_n(t)$ that
$$
C_1t\ve_n\sig_n^2\leq  \sum_{j=1}^{v_n(t)} V_j(n)\leq C_2t\sig_n^2/\ve_n+V_{v_n(t)}(n).
$$
Hence, if  $V_k(n)\asymp k^{-\gamma}$ we get that 
$$
C_1 tn^{1-\gamma}\ve_n^2\leq (v_n(t))^{1-\gamma}\leq C_2 tn^{1-\gamma}/\ve_n^2
$$
and so 
$$
 C_1 t^{\frac 1{1-\gamma}}n(\ve_n)^{\frac 2{1-\gamma}}\leq v_n(t)\leq C_2 t^{\frac 1{1-\gamma}}n(\ve_n)^{-\frac 2{1-\gamma}}.
$$
Similarly, when    $\text{Var}(\xi_{k,n})\asymp k^{-1}$  then by \eqref{Var} we have  $\ve_n \ln n\leq  \sig_n^2\leq \ve_n^{-1}\ln n$ and so
$$
C_1 t\ve_n \ln n\leq  \ln v_n(t)\leq C_2t\ve_n^{-1}\ln n.
$$
\end{proof}

\begin{remark}
The purpose of Lemma \ref{Lvar} is to demonstrate how we can control $\sig_n^2$ and $v_n(t)$, and
similar  results can be formulated when $\text{Var}(\xi_{k,n})\asymp c_n k^{-\gamma}$, $\gamma\leq 1$ for some sequence $c_n$ which grows sufficiently slow in $n$, as well as under different growth rates which, for instance,  insure that $\sum_{j=1}^n\text{Var}(\xi_{k,n})$ is of order $n^{\te}$ or $(\ln n)^\te$ for some $\te$, etc.
\end{remark}

\subsection{General estimates on the $\phi$-mixing coefficients}
\begin{lemma}\label{MixLem}
For all $n$ and all $k<n$ 
we have $\phi_n(k)\leq \del_n^k$.
\end{lemma}
\begin{proof}
Fix some $n$ and  $m,k\in\bbN$ so that $m+k\leq n$.
Let $G=g(\zeta_{m+k,n},\zeta_{m+k+1,n},...)$ be a bounded  nonnegative  function of $\zeta_{j,n}, n\geq j\geq m+k$. Then by the Markov property
$$
\bbE[G|\zeta_{1,n},...,\zeta_{m,n}]=\bbE[\bbE[G|\zeta_{1,n},...,\zeta_{m+k,n}]|\zeta_{1,n},...,\zeta_{m,n}]=\bbE[G_{m+k,n}|\zeta_{m,n}]
$$
where $G_{m+k.n}=\bbE[G|\zeta_{m+k,n}]$. 
Next, write
$$
\bbE[G_{m+k,n}|\zeta_{m,n}]=\int G_{m+k}(y)\pi_m^{(n,k)}(\zeta_{m,n},dy)
$$
where $\pi_m^{(n,k)}(x,A)=\bbP(\zeta_{m+k,n}\in A|\zeta_{m,n}=x)$. Hence, if $\mu_{m,n}$ denotes the law of $\zeta_{m,n}$ then for almost every realization of $\xi_{m,n}$ we have
$$
\left|\bbE[G|\zeta_{1,n},...,\zeta_{m,n}]-\bbE[G]\right|=\left|\int G_{m+k,n}(y)\pi_m^{(n,k)}(\zeta_{m,n},dy)-\int\left(\int G_{m+k,n}(y)\pi_m^{(n,k)}(w,dy)\right)d\mu_{m,n}(w)\right|
$$
$$
=\left| \int\left( \int G_{m+k,n}(y)[\pi_m^{(n,k)}(\zeta_{m,n},dy)-\pi_m^{(n,k)}(w,dy)]\right)d\mu_{m,n}(w)\right|\leq \sup|G|\del(\pi_m^{(n,k)})\leq 
 \sup|G|\del_n^k.
$$
Now, set  $X=(\zeta_{m+k,n},...,\zeta_{n,n})$ and $Y=(\zeta_{1,n},...,\zeta_{m,n})$ and let $A$ be a measurable set on the state space of $Y$ so that $\bbP((\xi_{1,n},...,\xi_{m,n})\in A)>0$, and $B$ be a measurable set on the state space of $X$. Let $g$ be the indicator function  of the set $B$.
Then
$$
\bbP(X\in B, Y\in A)-\bbP(X\in B)\bbP(Y\in A)=\bbE[\bbI(Y\in A)\big(\bbP[X\in B|Y]-\bbP(X\in B)\big)]
$$ 
$$
=\bbE[\bbI(Y\in A)\big(\bbE[G|Y]-\bbE[G]\big)]
$$
and so 
$$
\left|\bbP(X\in B, Y\in A)-\bbP(X\in B)\bbP(Y\in A)\right|\leq\bbE[\bbI(Y\in A)]\del_n^k=\bbP(Y\in A)\del_n^k.
$$
Thus,
$$
\left|\bbP(X\in B|Y\in A)-\bbP(X\in B)\right|\leq \del_n^k
$$
and the proof of the lemma is complete since $\phi_n(k)$ is the smallest upper bound on the left hand side when $A$ and $B$ range over all possible relevant measurable sets and $m$ ranges over all possible choices of indexes so that $k+m\leq n$.
\end{proof}

\subsection{On bounding $A_n$, $\Gamma_n(x)$ and $\beta_n$}

The following result is an immediate consequence of Lemma \ref{MixLem} and the definitions \eqref{Gamma n def} and \eqref{A_n def} of $A_n$ and $\Gamma_n$.
\begin{corollary}\label{C1}
We have
 $\Gamma_n(x)\leq\frac{\del_n^{x(1/p-1/q)}}{1-\del_n}$. Hence,  if $K_{q,n}=o(\ve_n^{-1})$ then  $\Gamma_n^{-1}(\frac1{8 K_{q,n}})\leq h_n:=C|\ln(\ve_n)|/\ve_n$
where $C=C_{p,q}$  is some positive constant. In this case, we have
$$
A_n=O\left(h_n(K_{2,n}+1)\right) 
$$
and the condition $A_n=o(\sig_n)$ holds true when 
\begin{equation}\label{Con}
\sum_{j=1}^n\text{Var}(\xi_{j,n})\gg |\ln(\ve_n)|(1+K_{2,n})\ve_n^{-2}.
\end{equation}
\end{corollary}

The appearance of the term $K_{2,n}\ve_n^{-1}$ on the above left hand side imposes another restriction on the behavior of the variances. For instance, condition \eqref{Con} is in force when $\ve_n\geq cn^{-\te}$ for some $\te,c>0$,  $\text{Var}(\xi_{j,n})\gg n^{-\gamma}$, $\gamma\in(0,1)$ with $2\te<1-\gamma$ and
 $K_{2,n}=o(n^{1-\gamma-\te})$.

\begin{lemma}\label{L0}
Let $0<\ve<\frac12$ and set
$$
j_n= \frac{|\ln(\frac12-\ve)|}{|\ln(\del_n)|}=O(1/\ve_n).
$$
Then $\beta_n$ from \eqref{MaxMom} satisfies 
$$
\beta_n\leq C_\ve q\left(1+j_n\left\|\max_{1\leq j\leq n}|\xi_{j,n}|\right\|_{q}\right). 
$$
Moreover:
$$\beta_n\leq C_\ve q\left(1+j_nK_{\infty,n}\right),\,\,K_{\infty,n}=\max_j\|\xi_{j,n}\|_\infty.$$
Furthermore, if $\sig_n\geq c n^\del$ (which holds  true when $\sum_{j=1}^n\text{Var}(\xi_{j,n})\geq c\ve_n^{-1}n^{2\del}$) for some $\del>0$ then
$$\beta_n\leq C_\ve q\left(1+j_n n^{1/q}K_{q,n}\right).$$
\end{lemma}

\begin{proof}
The lemma follows from the discussion in Section \ref{MMsec} together with the observation that by Lemma \ref{MixLem} we have $\phi_n(j_n)\leq \frac12-\ve$. Note that the condition for the lower bound of the variance is indeed sufficient due to \eqref{Var}.
\end{proof}

\subsection{Functional CLT rates}
In this section we will prove the following result by applying Theorem \ref{FuncCLT MC}.
\begin{corollary}\label{Dob Cor}
(i) If $\limsup\del_n<1$ and $\sup_{n,j}\|\xi_{j,n}\|_{L^\infty}<\infty$ then $d_P(W_n, B)=O(\sig_n^{-1/2+\ve})$ for every $\ve>0$.

(ii) Fix some  $q>p>2$. If $K_{q,n}=o(\ve_n^{-1})$ and  
\begin{equation}\label{Con0}
(1+K_{q,n})\big(\frac{|\ln \ve_n|}{\ve_n}(K_{2,n}+1)\big)(1+\ve_n^{-1}K_{\infty,n})\ve_n^{-1}\leq C(\ve_n \sum_{j=1}^n\text{Var}(\xi_{j,n}))^{r_0} 
\end{equation}
for some $r_0<w(p):=\min(\frac{p-2}{2p}, \frac{p-2}{p+2},\frac{p}{2p-4})$ (note that 
$w(p)\to 1/2$ as $p\to\infty$), then
$$
d_P(W_n, B)\leq C_q\sig_n^{-(w(p)-r_0)}|\ln \sig_n|.
$$

(iii) Suppose that 
 $\ve_n \sum_{j=1}^n\text{Var}(\xi_{j,n})\geq cn^{2\del}$ for some $\del,c>0$ and all $n$ large enough
and 
\begin{equation}\label{Con1}
(1+K_{q,n})\big(\frac{|\ln \ve_n|}{\ve_n}(K_{2,n}+1)\big)(1+n^{1/q}\ve_n^{-1}K_{q,n})\ve_n^{-1}\leq C n^{\del r_0}
\end{equation}
for some $r_0<w_p$.
 Then $\sig_n\geq c_1n^{\del}$ for some constant $c_2>0$ and
$$
d_P(W_n, B)\leq C_q\sig_n^{-(w(p)-r_0)}|\ln \sig_n|.
$$

\end{corollary}

Conditions \eqref{Con0} and \eqref{Con1}  impose  restrictions which only involve $\ve_n=1-\del_n$ and the individual variances of  $\xi_{j,n}$, and below we will give a few sufficient conditions under assumptions on the decay rates of $\ve_n$.
\begin{example}
(i) Condition \eqref{Con0} is in force when $\text{Var}(\xi_{j,n})\asymp j^{-\gamma}$ and $\ve_n\geq cn^{-\te}$ for some positive $\te$, $c$ and $\gamma\in(0,1)$ and $K_{\infty, n}=O(n^{\eta})$ for some $\eta$ so that both $3\eta+3\te$ and $2\eta+3\te$ are smaller than $r_0(1-\gamma-\te)$ (so we can take sufficiently small $\eta$ and $\te$, given $\gamma$ and $r_0$). 
\vskip0.1cm
(ii) Condition \eqref{Con1}  is valid if $\ve_n\geq cn^{-\te}$, $\text{Var}(\xi_{j,n})\geq c j^{2\del+\te-1}$ and $K_{q, n}=O(n^{\eta})$ for some $\eta$ so that both $3\eta+2\te+1/q$ and $2\eta+3\te+1/q$ are smaller than $2r_0\del$ (so we can take sufficiently small $\eta$ and $\te$ and a large $q$, given $\del$ and $r_0$).
\end{example}

\begin{proof}[Proof of Corollary \ref{Dob Cor}]
(i) This part is a direct consequence of Corollary \ref{Corr2}. Indeed, $\al_n(k)\leq\phi_n(k)\leq \del_n^k\leq \del^k$ for some $0<\del<1$ and so for every $a>0$ we have $\sup_n\sum_{k}\al_n(k)^{a}<\infty$. 
\vskip0.1cm
(ii) Let us take $l_n=2$ in  Theorem \ref{FuncCLT MC}. Then $r_n(p,[l_n/2])=0$ and so the number $w_n$ appearing in Theorem \ref{FuncCLT MC} satisfies
$$
w_n=2\sig_n^{-2(1-2/p)}+\sqrt 2\sig_n^{-1}.
$$
Next, by Corollary \ref{C1} and Lemma \ref{L0} we have that 
$$
C(n;q)\leq (1+K_{q,n})\big(\frac{|\ln \ve_n|}{\ve_n}(K_{2,n}+1)\big)(1+\ve_n^{-1}K_{\infty,n})\ve_n^{-1}
$$
where we have used that $\Gamma_n(1)\leq \frac1{1-\del_n^{1/p-1/q}}\leq C_{p,q}\frac1{1-\del_n}=C_{p,q}\ve_n^{-1}$.
Combining this with \eqref{Var} and the conditions in part (ii) we see that 
$$
C(n;q)=O(\sig_n^{r_0}).
$$
Now the result follows from Theorem \ref{FuncCLT MC}.
\vskip0.1cm
(iii) First, the lower bound $\sig_n\geq cn^{\del}$ follows from the assumption of Corollary \ref{Dob Cor} together with \eqref{Var}. Next,
let us take again $l_n=2$ in  Theorem \ref{FuncCLT MC}.
 By Corollary \ref{C1} and Lemma \ref{L0} we have that 
$$
C(n;q)\leq(1+K_{q,n})\big(\frac{|\ln \ve_n|}{\ve_n}(K_{2,n}+1)\big)(1+n^{1/q}\ve_n^{-1}K_{q,n})\ve_n^{-1}.
$$
Combining this with \eqref{Var} and the conditions in part (iii) we see that 
$$
C(n;q)=O(\sig_n^{r_0}).
$$
Now the result follows from Theorem \ref{FuncCLT MC}.

\end{proof}

\subsection{Berry-Essen type estimates}

\subsubsection{Non-uniform polynomial and exponential rates}
To verify the conditions of Theorem \ref{BEthm} (in order to obtain CLT rates) we will need the following result. 

\begin{lemma}\label{LL}
Let $\ve_n=1-\del_n>0$. 
\vskip0.1cm
(i) If $\ve_n$ is bounded away from $0$ then $\phi_n(j)\leq e^{-\ka j}$ for some $\ka>0$.
\vskip0.1cm
(ii) Suppose that $\ve_n\to 0$. Then:
\vskip0.1cm
(1) We have $\phi_n(j)\leq B_nj^{-a_n}$ if $\ln B_n\geq Ca_n\ln (a_n/\ve_n)$ for some constant $C>1$.
\vskip0.1cm
(2) We have $\phi_j(n)\leq B_ne^{-\ka_n j}$ if $|\ln(\del_n)|\geq \ka_n-\ln B_n/n$ . In particular, by taking $B_n=1$ we see that $\phi_j(n)\leq e^{-|\ln \del_n| j}$ and so $\phi_j(n)\leq e^{-C_n\ve_n j}$ for $C_n\to 1$.
\end{lemma}
\begin{proof}
First, recall that  by Lemma \ref{MixLem} we have $\phi_n(j)\leq \del_n^j$.  
\vskip0.1cm
(i) If $\ve_n\geq c$ for some $c>0$ then $\del_n\leq 1-c$ and so $\phi_n(j)\leq (1-c)^j$.

\vskip0.1cm

(ii) First, we have $\ln\del_n\leq -C_n\ve_n$ for some $C_n\to 1$.
\vskip0.1cm
(1)
 By taking the logarithms of both sides we see that the first item will follow if for each $1\leq j\leq n$ we have 
$C_n\ve_n j-a_n\ln j+\ln B_n\geq 0$. Let us define the function $f_n:[1,n]\to\bbR$ by 
$$
f_n(x)=C_n\ve_n x-a_n\ln x+\ln B_n.
$$
Then $f_n$ has a minimal value at the unique stationary point $x_n=\frac{a_n}{C_n\ve_n}$. Thus $f_n(j)\geq 0$ for all $j\geq1$ if
$$
f_n(x_n)=a_n-a_n\ln (\frac{a_n}{C_n\ve_n})+\ln B_n\geq0
$$  
which is true when $\ln B_n\geq Ca_n\ln (a_n/\ve_n)$ (and $n$ is large enough).
\vskip0.1cm
(2)  By taking the logarithms of both sides if is enough to show that for each $1\leq j\leq n$ we have $-jC_n\ve_n\leq \ln B_n-\ka_nj$ which holds true (for $n$ large enough) when $\ve_n\geq C(\ka_n-\ln B_n/n)$ for some $C>1$.
\end{proof}

\begin{corollary}\label{CC}
(i) If $\del_n\leq 1-c$ for some $c>0$ and $\sup_{n,j}\|\xi_{j,n}\|_{L^\infty}<\infty$ then 
$$
\sup_{t\in\bbR}|\bbP(S_n/\sig_n\leq t)-\Phi(t)|=O(\sig_n^{-1}\ln^3(\sig_n)).
$$

 (ii) Let $0<r_0<1$. We have
 $$
\sup_{t\in\bbR}|\bbP(S_n/\sig_n\leq t)-\Phi(t)|=O(\sig_n^{-(1-2\gamma_n-r_0)})
$$
where 
$$
\gamma_n=\frac{q-1}{a_n(q-2)(2q)^{-1}+2(q-1)}
$$
for some $a_n>1$
in the following situations:
 \vskip0.1cm
 (1) When $K_{q,n}=o(\ve_n^{-1})$ and 
$$
\left(\frac{a_n}{\ve_n}\right)^{3Ca_n}\left(1+K_{\infty,n}\ve_n^{-1}\right)^3\left(|\ln\ve_n|\ve_n^{-1}(K_{2,n}+1)\right)^3\left(1+(a_n-1)^{-1/2}\right)\ll \left(\ve_n \sum_{j=1}^n\text{Var}(\xi_{j,n})\right)^{r_0/2}
$$
for  some constant $C>1$ (where $q_n\ll p_n$ means that $p_n/q_n\to \infty$).
\vskip0.1cm

(2) When $\ve_n \sum_{j=1}^n\text{Var}(\xi_{j,n})\geq Cn^{2\del}$ for some $\del>0$ and $C>0$ (so that $\sig_n\geq cn^{\del}$),  $K_{q,n}=o(\ve_n^{-1})$ and  for some  $q_0\geq q$ we have  
$$
\left(\frac{a_n}{\ve_n}\right)^{3Ca_n}\left(1+K_{q_0,n}n^{\frac 1{q_0}}\ve_n^{-1}\right)^3\left(|\ln\ve_n|\ve_n^{-1}(K_{2,n}+1)\right)^3\left(1+(a_n-1)^{-1/2}\right)\ll n^{\del r_0}.
$$
\vskip0.1cm

(iii) Fix some $0<r_0<1$. We have
 $$
\sup_{t\in\bbR}|\bbP(S_n/\sig_n\leq t)-\Phi(t)|=O\left(\sig_n^{-(1-r_0)}(\ln (\sig_n))^3\right)
$$
in the following situations:
\vskip0.1cm

(3) When $\ve_n\geq C(\ka_n-\ln B_n/n)$ for some $C>1$, for some sequences $\ka_n>0$ and $B_n\geq 1$ (e.g. when $\ve_n\geq C\ka_n$ and $B_n=1$),   $K_{q,n}=o(\ve_n^{-1})$ and 
$$
B_n^3\left(1+K_{\infty,n}\ve_n^{-1}\right)^3\left(|\ln\ve_n|\ve_n^{-1}(K_{2,n}+1)\right)^3\left(1+\ka_n^{-3}\right)\ll \left(\ve_n \sum_{j=1}^n\text{Var}(\xi_{j,n})\right)^{r_0/2}.
$$
\vskip0.1cm

(4) When $\ve_n\geq C(\ka_n-\ln B_n/n)$ for some sequences $B_n\geq 1$, $\ka_n$ and a constant $C>1$, 
 $\ve_n \sum_{j=1}^n\text{Var}(\xi_{j,n})\geq C'n^{2\del}$ for some $\del>0$ and $C'>0$ (so that $\sig_n\geq cn^{\del}$),   $K_{q,n}=o(\ve_n^{-1})$ and for some  $q_0\geq q$ we have  
$$
B_n^3\left(1+K_{q_0,n}n^{\frac 1{q_0}}\ve_n^{-1}\right)^3\left(|\ln\ve_n|\ve_n^{-1}(K_{2,n}+1)\right)^3\left(1+\ka_n^{-3}\right)\ll n^{\del r_0}.
$$

\vskip0.1cm
(iv)  We have
 $$
\sup_{t\in\bbR}|\bbP(S_n/\sig_n\leq t)-\Phi(t)|=O\left(\sig_n^{-1}(\ln (\sig_n))^{4}\right)
$$
in the following  situation:
\vskip0.1cm
(5) When $\ve_n\geq C(\ka_n-\ln B_n/n)$ for some constant $C>1$ and sequences $\ka_n>0$ and $B_n\geq 1$,   $K_{q,n}=o(\ve_n^{-1})$ and 
$$
B_n^3\left(1+K_{\infty,n}\ve_n^{-1}\right)^3\left(|\ln\ve_n|\ve_n^{-1}(K_{2,n}+1)\right)^3\left(1+\ka_n^{-3}\right)\ll \ln\left(\ve_n \sum_{j=1}^n\text{Var}(\xi_{j,n})\right).
$$
\vskip0.1cm

\end{corollary}

\begin{proof}
(i) This result follows from Corollary \ref{Cor BE} together with Lemma \ref{LL} (i).

(ii) Let us take $B_n=\frac{a_n}{\ve_n}$ in the first part of Lemma \ref{LLL} (ii), so that $\phi_n(j)\leq B_nj^{-a_n}$. Recall also that by Lemma \ref{C1} we have $A_n=O\big(|\ln\ve_n|\ve_n^{-1}(K_{2,n}+1)\big)$ and by Lemma \ref{L0}  
we  always have either  $\beta_n=O(1+K_{\infty,n}\ve_n^{-1})$ or $\beta_n=O(1+K_{q_0,n}n^{\frac 1{q_0}}\ve_n^{-1})$.
Now the proofs of items (1) and (2) follow from Theorem \ref{BEthm} (i).

(iii) By the second part of Lemma \ref{LLL} (ii) we have $\phi_n(j)\leq B_ne^{-\ka_n}$. Since we can take either  $\beta_n=O(1+K_{\infty,n}\ve_n^{-1})$ or $\beta_n=O(1+K_{q_0,n}n^{\frac 1{q_0}}\ve_n^{-1})$,  the proofs of items (3) and (4) follow from Theorem \ref{BEthm} (ii).

(iv) The proof is similar to part (iii).

\end{proof}

\begin{example}
Suppose that $\ve_n\geq c n^{-\te}$ for some $\te,c>0$.
\vskip0.1cm
(ii) The condition in item (1) holds true with $a_n=a>1$ when $K_{\infty,n}=o(n^\te)$ and
$$
V(n):=\sum_{j=1}^n\text{Var}(\xi_{j,n})\gg n^{\frac{(21+6a)\te}{r_0}+\te}.
$$ 
Thus, if $V(n)\geq Cn^{\eta}$ for some $\eta>0$ (e.g. when $\text{Var}(\xi_{j,n})\asymp j^{\eta-1}$) then
by taking $q=3$, given $r_0$ and $a$, we get the rate $O(\sig_n^{-(1-w_a-r_0-\ve)})$, $w_a=\frac{12}{24+a}$, for every $\ve$ if $\te$ is small enough.
 \vskip0.1cm
 Similarly, the condition in item (2) holds true with $a_n=a$ if $V(n)\geq Cn^{2\del+\te}$, for some $q_0\geq 3$ we have $K_{q_0,n}=o(n^{\te})$ and 
 $$
3a\te+\frac{3}{q_0}+12\te<\del r_0.
$$
In this case we also get the rate $O(\sig_n^{-(1-w_a-r_0-\ve)})$.
 \vskip0.1cm
 (iii) Let us take $\ka_n=c\ve_n$ for some $c<1$ and $B_n=1$. Suppose that $K_{\infty,n}=o(n^\te)$. Then the conditions in item (3) hold true if $V(n)\geq Cn^{\frac{24\te}{r_0}+\te}$, and we get the rate $O\left(\sig_n^{-(1-r_0)}(\ln (\sig_n))^3\right)$. The conditions of item (4) are in force (and we get the same rate) if $V(n)\geq Cn^{2\del+\te}$, for some $q_0\geq 3$ we have $K_{q_0,n}=o(n^{\te})$ and 
  $$
15\te+\frac{3}{q_0}<\del r_0.
$$
\vskip0.1cm
(iv) Let us suppose that $\ve_n\geq  c(\ln n)^{-\te}$ for some $\te,c>0$ and take $\ka_n=c\ve_n$ for some $c<1$ and $B_n=1$. Then the conditions in item (5) are in force if $K_{\infty,n}=o((\ln n)^{\te})$ and $\ln V(n)\gg (\ln n)^{15\te}$ (e.g. when $15\te<1$ and $c_n (\ln n)^{\te}\geq \text{Var}(\xi_{j,n})\geq cj^{-1}$ with $c_n\to 0$ slower than $(\ln n)^{-\te}$). 
\end{example}

\subsection{Moderate deviations principle}

Let us first formulate a result that follows from taking $\eta=1$ in Theorem \ref{MDPthm} and using our previous estimates on $A_n,j_n$ etc.

\begin{corollary}
(i) If $\sup_n\del_n<1$ (i.e. $\ve_n$ is bounded away from $0$) and $\sup_{n,j}\|\xi_{j,n}\|_{L^\infty}<\infty$ then the MDP and the moment estimates in Theorem \ref{MDPthm}
hold true with $1\leq R_n\leq C$ for some constant $C$. In particular, for every $0<r_0<\frac13$ and all measurable sets $\Gamma\subset\bbR$,
\begin{equation}\label{MdP}
-\inf_{x\in\Gam^o}(x^2/2)\leq \liminf_{n\to\infty}\frac1{\sig_n^{2r_0}}\ln\bbP(S_n/\sig_n^{1+r_0}\in\Gamma)\leq  \limsup_{n\to\infty}\frac1{\sig_n^{2r_0}}\ln\bbP(S_n/\sig_n^{1+r_0}\in\Gamma) \leq-\inf_{x\in\bar\Gam}(x^2/2)
\end{equation}
and for all $p$,
$$
\left|\bbE[S_n^p]-\sig_n^p\bbE[Z^p]\right|=O(\sig_n^{p-1}).
$$

(ii) Suppose that $\ve_n\to0$ as $n\to\infty$ and that $K_{\infty,n}=o(\ve_n^{-1})$. Let $0<c<1$ and $u>1$.  Then for $n$ large enough $\phi_n(j)\leq e^{-c\ve_n j}$ and
 $R_n$ from  Theorem \ref{MDPthm} satisfies
$$
R_n=O(\ve_n^{-3}\ln(\ve_n^{-1}))e^{c^{-u}\ve_n^{-u}}.
$$
Therefore, the condition $R_n=o(\sig_n^{1/3})$ from  Theorem \ref{MDPthm} is satisfied when
$$
q_n:=V(n)\ve_n^{19}|\ln(\ve_n)|^{-6}e^{-6c^{-u}\ve_n^{-u}}\to\infty
$$
where $V(n)=\sum_{j=1}^n\text{Var}(\xi_{j,n})$. In this case we can take any sequence $t_n\to\infty$ so that $t_n=o(\sqrt{q_n})$.

In particular, if $\ve_n\geq c_1n^{-\te_1}$ for some $\te_1,c_1>0$ and 
$$
V(n)\geq n^{19\te_1}\ln ne^{(cc_1)^{-u}n^{u\te_1}}y_n^2
$$
with $y_n\to\infty$ then the MDP holds with $t_n=o(y_n)$.
If $\ve_n\geq c_1(\ln n)^{-\te_1}$ for some $\te_1,c_1>0$ and 
$$
V(n)\geq (\ln n)^{19\te_1}\ln(\ln n)e^{(cc_1)^{-u}(\ln n)^{u\te_1}}y_n^2
$$
with $y_n\to\infty$ then the MDP holds with $t_n=o(y_n)$.
\end{corollary}

\begin{proof}
(i) In the circumstances of part (i) we have  $\phi_n(j)\leq\del_n^j\leq e^{-\ka j}$ for some $\ka>0$. Hence we can take $B_n=1$, $a_n=\ka$ and $\eta=1$ in Theorem \ref{MDPthm}. It also follows from the uniform mixing rates $\phi_n(j)\leq e^{-\ka j}$ and the uniform boundedness of the array that $A_n$ is bounded. Since $\phi(n_0)=\sup_n\phi_n(n_0)<1/2$ for $n_0$ large enough we can take $j_n=n_0$ to be a constant. Hence $R_n$ is bounded and the result follows.

(ii) Let  us take $\ka_n=c\ve_n$ in Lemma \ref{LLL} (ii). Then the mixing conditions in Theorem \ref{MDPthm} hold true with $B_n=1$, $\eta=1$ and $a_n=\ka_n$. Using the estimates on $j_n$ and $A_n$ provided in Corollary \ref{C1} we see that 
$$
R_n=O(\ve_n^{-3}\ln(\ve_n^{-1}))e^{c^{-u}\ve_n^{-u}}.
$$
Now all the results stated in part (ii) follow from applying Theorem \ref{MDPthm} and using \eqref{Var} to bound $\sig_n^2$ from below by $C\ve_n V(n)$.
 
\end{proof}

\subsubsection{Non-uniform-stretched exponential rates}
In this section we will explain how to apply Theorem \ref{MDPthm} with $\eta\not=1$. In order not to overload this section we will not formulate precise results, and instead we will focus on providing conditions for non-uniform mixing rates of the form $\phi_j(n)\leq B_ne^{-a_nj^\eta}$ for $B_n,a_n>0$ and positive $\eta\not=1$ (which we think is interesting by its own right). After this is established explicit conditions can be given, for instance, in two cases $\ve_n\geq cn^{-\te}$ and $\ve_n\geq c (\ln n)^{-\te}$.

\begin{lemma}\label{LMDP}
(i) Let $\eta\in(0,1)$  and let $a_n$ be a positive sequence.
Set  $x_n=(a_n\eta /|\ln \del_n|)^{\frac{1}{1-\eta}}$.
If $x_n\leq 1$  then
$\phi_n(j)\leq \del_n e^{a_n}e^{-a_n j^\eta}$ for every $1\leq j\leq n$. Otherwise, 
 $\phi_n(j)\leq \max(\del_n e^{a_n},U_n)e^{-a_n j^\eta}$
where
\begin{equation}\label{EQ}
U_n=\exp\left(a_n^{\frac{1}{1-\eta}}|\ln\del_n|^{-\frac{\eta}{1-\eta}}\left(\eta^{\frac{\eta}{1-\eta}}-\eta^{\frac1{1-\eta}}\right)\right).
\end{equation}

\vskip0.1cm
(ii) Let $\eta>1$ and let $a_n$ be a positive sequence. Then
$\phi_n(j)\leq \del_n e^{a_n}e^{-a_n j^\eta}$ for every $1\leq j\leq n$ if $x_n\geq n$ (where $x_n$ was defined above).
\end{lemma}

\begin{proof}
Let $B_n>0$.
By Lemma \ref{MixLem} we have $\phi_n(j)\leq \del_n^j$. Now $\del_n^j\leq B_ne^{-a_n j^\eta}$ for every $1\leq j\leq n$  if and only if 
$$
f_n(j):=j|\ln \del_n|+\ln B_n-a_nj^{\eta}\geq0
$$
for every $1\leq j\leq n$. Let us consider $f_n(x)$ as a function of $x>0$. Suppose next that $0<\eta<1$ and take $B_n\geq \del_n e^{a_n}$ (so that $f_n(1)\geq 0$).
Then $f_n''(x)>0$ and $f_n'(x)=0$ iff $x=x_n$. Hence $x_n$ is the absolute minimum of $f_n$. 
 If $x_n\leq 1$ then $f_n$ is increasing on $[1,\infty)$ and so $f_n(j)\geq 0$ for all $j\in\bbN$. Hence we can take $B_n=\del_n e^{a_n}$. If $x_n\geq 1$ then we notice  that  $f_n(x_n)\geq0$ iff $B_n\geq U_n$. Hence, for $B_n=\max(\del_n e^{a_n}, U_n)$ we see that $f_n(x_n)\geq 0$ and so $f_n\geq0$.

Now, suppose that $\eta>1$. Then $f_n''(x)<0$ and so $x_n$ is a global maximum.  Thus, $f_n$ is increasing on $[1,x_n]$ and decreasing on $[x_n,\infty)$. Let us now take $B_n=\del_ne^{a_n}$ so that $f_n(1)=0$. Hence, if $x_n\geq n$ then $f_n(j)\geq0$ for all $1\leq j\leq n$.
\end{proof}

\begin{remark}
By analyzing the function $f_n$
certain rates can be obtained also  when $\eta>1$ and $x_n\leq n$, but this leads to ineffective rates of the form $\phi_n(j)\leq Ce^{-a_n j^\eta}$ with $a_n\leq cn^{-\eta}$.
\end{remark}

\begin{example}[Explicit stretched exponential mixing rates]
(i) Let $\eta\in(0,1)$.
\vskip0.1cm
(1) Let $a_n=a>0$ be a constant sequence. Then $x_n\to\infty$. Suppose that $\ve_n\geq c_1n^{-\te_1}$ for some $c_1,\te_1>0$, then $|\ln\del_n|\leq C_n c_1n^{-\te_1}$ with $C_n\to1$ and so
$$
\phi_j(n)\leq CU_ne^{-aj^\eta}=Ce^{C_nc_1 c_{\eta,a}n^{\frac{\te_1\eta}{1-\eta}}}e^{-aj^\eta}
$$
with  $c_{a,\eta}=a^{\frac1{1-\eta}}\left(\eta^{\frac{\eta}{1-\eta}}-\eta^{\frac1{1-\eta}}\right)$. These rates are effective when $\te<1-\eta$ since otherwise the term $n^{\frac{\te\eta}{1-\eta}}$ dominates all the powers $j^\eta$.

If $\ve_n\geq c(\ln n)^{-\te}$ for some $c,\te>0$. Then $|\ln\del_n|\leq C_nc(\ln n)^{-\te}$ with $C_n\to1$ and 
$$
\phi_j(n)\leq CU_ne^{-aj^\eta}=Ce^{C_ncc_{\eta,a}(\ln n)^{\frac{\te_1\eta}{1-\eta}}}e^{-aj^\eta}.
$$
\vskip0.1cm

(2) Let us take $a_n=|\ln\del_n|/\eta$ so that $x_n=1$. Then 
$$
\phi_n(j)\leq e^{a_n}e^{-a_nj^\eta}.
$$
When $\ve_n\geq c_1n^{-\te_1}$ we get that 
$$
\phi_n(j)\leq Ce^{-C_n c_1n^{-\te}j^\eta}, C_n\to 1.
$$
This is an effective non-uniform mixing rate when $\te_1<\eta$ (for otherwise the term $n^{\te_1}$ will dominate $j^{\eta}$). If, instead,
$\ve_n\geq c(\ln n)^{-\te}$  then
$$
\phi_n(j)\leq Ce^{-a_nj^\eta}=Ce^{C_nc\eta^{-1}(\ln n)^{-\te}j^\eta}.
$$
\vskip0.1cm

(3) When $x_n\geq 1$ we write $a_n=|\ln\del_n|v_n/\eta$ with $v_n\geq1$ and $C_n\to1$. Then 
$$
U_n=e^{v_n^{1/\eta}(\eta^{-1}-1)|\ln \del_n|}
$$
and so, if $v_n$ is large enough then $U_n\geq e^{a_n}$ and when  $\ve_n\geq c_1n^{-\te_1}$ we get
$$
\phi_n(j)\leq e^{C_n c_1 n^{-\te_1}(1-\eta)v_n^{\frac{\eta}{1-\eta}}}e^{-C_n c_1v_n n^{-\te_1}j^{\eta}}
$$
Thus, when $v_n=n^{\zeta}$ for some $\zeta<\min(\te_1, \frac{(1-\eta)\te_1}{\eta})$ then
$$
\phi_n(j)\leq Ce^{-C_n n^{\zeta-\te_1}j^\eta}
$$
which is effective if also $\zeta<\eta+\te_1$.

If, instead, $\ve_n\geq c(\ln n)^{-\te}$  and $v_n^{1/\eta}=(\ln n)^{w}$, $w<\te$  we get 
$$
\phi_n(j)\leq Ce^{-C_nc(\ln n)^{\eta w-\te}j^\eta}.
$$
\vskip0.1cm

(ii) Let $\eta>1$. If $\ve_n\geq cn^{-\te}$ then the condition $x_n\geq n$ holds true iff $a_n\leq \eta^{-1}(C_n c_1)^{\eta-1}n^{-(\te+\eta-1)}$. Let $a_n=Cn^{-\zeta}$ with $\zeta\geq \te+\eta-1$  (when $\zeta=\te+\eta-1$ 	we assume $C<c_1^{\eta-1}\eta^{-1}$). Then, since $a_n=o(1)$, 
$$
\phi_n(j)\leq C_1e^{Cn^{-\zeta}j^\eta}
$$
which is effective when also $\zeta<\eta$ (which is possible when $\eta>\te+\eta-1$, namely $\te<1$).
\end{example}

\section{First step of the proofs: regular blocks for non-uniformly $\al$-mixing arrays}\label{Block}


\subsubsection{An overview}
In this section we will essentially make a reduction to the case when $\sig_n^2$ grows sufficiently fast in $n$. More precisely, we will decompose $S_n$ into blocks $S_n=X_{1,n}+...+X_{k_n,n}$ with $k_n\asymp \sig_n^2/A_n^2$ so that $X_{j,n}$ is a function of $\xi_{k,n}$ for $k\in B_j(n)$, where the blocks $B_{j}(n)$ are ordered so that $B_j(n)$ is to the left of $B_{j+1}(n)$.
A similar decomposition was established in \cite{HafFCLT} for uniformly mixing triangular arrays (with $A_n$ being bounded), and in this section we will extend it to non-uniformly mixing arrays (which, as opposed to \cite{HafFCLT}, are not necessarily uniformly bounded in some $L^q$). The main difference here is that the variance $\sig_n^2$ does not necessarily grow linearly fast in the new number of summands $k_n\asymp A_n^2/\sig_n^2$ since in general $A_n$ is unbounded, but since we assume that $A_n=o(\sig_n)$ we still get that the number of summands is not negligible in comparison with the variance, which will be enough for our methods of proof to be effective.
\begin{remark}\label{R beta 2}
We stress that (as noted in Remarks \ref{R beta} and \ref{R beta 1}) when $c_1 n\leq \sig_n^2\leq c_2n$ for some constants $c_1,c_2>0$ and all $n$ large enough then we  can skip all the results stated in this section, set $X_{j,n}=\xi_{j,n}$, $\beta_n=K_{q,n}$ and $A_n=1$ and proceed with the proof as in the next sections.
\end{remark}

\subsubsection{First steps towards the construction of the blocks}
Let us now start with the construction of the blocks $X_{j,n}$.
Set
$$
\del_n(m)=\sup_{k}\sum_{s=m}^{n-k}\left\|\bbE[\xi_{k+s,n}|\xi_{k,n},...,\xi_{1,n}]\right\|_{L^2}.
$$
We first need the following result.
\begin{lemma}\label{lemm del}
Suppose that $\sup_{j}\|\xi_{j,n}\|_{L^q}\leq K_{q,n}<\infty$ for some $q>2$. Then
$$
\del_n(m)\leq 2K_{q,n}\sum_{j=m}^{n}(\al_n(j))^{1/2-1/q}.
$$
\end{lemma}\label{al lem}
\begin{proof}
By \eqref{RRel} we have 
$$
\al(\cG,\cH)=\frac14\varpi_{\infty,1}(\cG,\cH),
$$
where $\varpi_{\infty,1}(\cdot,\cdot)$ is defined in \eqref{var pi def} and $\al(\cdot,\cdot)$ in \eqref{al def1}. By applying \eqref{MixCoe} we see that
$$
\|\bbE[\xi_{k+s,n}|\xi_{1,n},...,\xi_{k,n}]\|_{L^2}\leq K_{q,n}\varpi_{q,2}(\cG,\cH)\leq 2K_{q,n}\big(\al(\cG,\cH)\big)^{1/2-1/q}\leq K_{q,n}2 \big(\al(s)\big)^{1/2-1/q}
$$
where $\cH$ is the $\sig$-algebra generated by $\zeta_{k+s,n}$ and $\cG$ is the $\sig$-algebra generated by $\{\zeta_{1,n},...,\zeta_{k,n}\}$.
\end{proof}
Since $(\al_n(j))^{1/2-1/q}\leq (\al_n(j))^{1/p-1/q}$ we see that $\del_n(m)\leq 2K_{q,n}\Gamma_n(m)$, where $\Gamma_n(\cdot)$ was defined in \eqref{Gamma n def}.
Hence,
$$r_n=\max\left(\Gamma_n^{-1}(\frac1{8 K_{q,n}}),1\right)$$
 satisfies
$$
\del_n(r_n)<\frac14.
$$


\subsubsection*{\textbf{The blocks}}
Fix some $n$ and set $S_k=S_k^{(n)}$.
Let $A_n$ be given by \eqref{A_n def}. Set $\ve_0=\frac1{18}$. Then $A_n\ve_0=r_n(1+K_{2,n})$, where $K_{2,n}=\max_{j}\|\xi_{j,n}\|_{L^2}$.
 Let us take $b_1=b_1(n)$ to be the first time that 
$$\|S_{b_1}\|_{2}\geq A_n.$$ Set $Y_1=Y_1(n)=S_{b_1}$. 
Note that 
$$
A_n\leq \|S_{b_1}\|_{2}\leq K_{2,n}b_1
$$
and so $b_1\geq A_n/K_{2,n}\geq \ve^{-1}r_n\geq r_n$.
Next, we take $\beta_1=\beta_1(n)$ to be the smallest positive integer so that 
$$\|S_{b_1+\beta_1}-S_{b_1}\|_{2}\geq A_n\ve_0.$$ 
Set $Z_1=Z_1(n)=S_{b_1+\beta_1}-S_{b_1}$. 
Note that 
$$
\ve_0 A_n\leq \|S_{b_1+\beta_1}-S_{b_1}\|_{2}\leq K_{2,n}\beta_1
$$
and so $\beta_1\geq \ve_0 A_n/K_{2,n}\geq r_n$.
Continuing this way we get blocks $Y_1(n),Z_1(n),Y_2(n),Z_2(n),...$ of the form $\sum_{j\in I}\xi_{j,n}$ for an interval $I$ so that: 
\begin{enumerate}
\item the size of the gap between two consecutive $Y_j(n)$'s is at least $r_n$;
\vskip0.1cm
\item  the size of the gap  between  between two consecutive $Z_j(n)$'s is at least $r_n$;
\vskip0.1cm
\item $$A_n\leq \|Y_j(n)\|_2\leq A_n+K_{2,n},\,\,A_n\ve_0 \leq \|Z_j(n)\|_2\leq A_n\ve_0+K_{2,n};$$
\vskip0.1cm
\item $\sum_{j=1}^{k_n}(Y_j(n)+Z_j(n))=S_n=\sum_{j=1}^{n}\xi_{j,n}$, where $k_n$ is the number of $Y_j(n)$'s and $Z_j(n)$'s. 
\end{enumerate}
Note that $\ve_0 A_n\geq r_n\geq1$ and so  both $\|Y_{j}(n)\|_2$ and $\|Z_j(n)\|_2$ are at least $1$.
Let $X_j(n)=X_{j,n}=Y_j(n)+Z_j(n)$ and let 
 us denote by $B_j(n)=\{a_j(n),...,b_j(n)\}$ the set of indexes so that 
\begin{equation}\label{B j n}
X_j(n)=\sum_{k\in B_j(n)}\xi_{k,n}.
\end{equation}
Notice that 
$$
\|X_j(n)\|_2\geq \|Y_j(n)\|_{2}-\|Z_j(n)\|_2\geq A_n-\ve_0 A_n-K_{2,n}\geq (1-2\ve_0)A_n
$$
where we used that $\ve_0 A_n\geq r_n K_{2,n}\geq K_{2,n}$. Using also the maximality property in the construction of the blocks $Y_{j}(n)$ and $Z_{j}(n)$ and that $K_{2,n}\leq A_n$ we conclude that
\begin{equation}\label{Vars}
A_n(1-2\ve_0)\leq \|X_j(n)\|_{L^2}\leq \max_{m\leq b_{j}(n)}\left\|\sum_{k=a_j(n)}^{m}\xi_{j,n}\right\|_2\leq 4A_n.
\end{equation}
We note that in the above construction we might technically need to absorb the last block in the penultimate block, but this only amounts to replacing $A_n$ with $2A_n$, which will make no difference in the following arguments.
\vskip0.1cm

The main additional property of the blocks $X_{j}(n)$ is the following non-uniform version of  \cite[Proposition 3.6]{HafFCLT} (which was only valid for uniformly mixing and bounded arrays.)
\begin{proposition}\label{Prop.1}
For every $1\leq s_1<s_2\leq k_n$ we have 
$$
\frac{1}{64}\sum_{j=s_1}^{s_2}\text{Var}(X_j)\leq \text{Var}\left(\sum_{j=s_1}^{s_2}X_j\right)\leq \frac{9c_{\ve_0}^2}{4}\sum_{j=s_1}^{s_2}\text{Var}(X_j)
$$
where $X_j=X_j(n)$ and $c_{\ve_0}=c_{\ve_0}=\frac{1+\ve_0}{1-2\ve_0}=\frac{19}{16}$. As a consequence, by \eqref{Vars} the number of blocks $k_n$ satisfies $k_n\asymp \sig_n^2/A_n^2$.
\end{proposition}
The proof of Proposition \ref{Prop.1} is based on the following two results.

\begin{lemma}\label{SumVars.1}
For all $1\leq s_1<s_2\leq k_n$ we have
$$
\frac12\sum_{j=s_1}^{s_2}\text{Var}(Y_j)\leq \text{Var}\left(\sum_{j=s_1}^{s_2}Y_j\right)\leq \frac32\sum_{j=s_1}^{s_2}\text{Var}(Y_j)
$$
and 
$$
\frac12\sum_{j=s_1}^{s_2}\text{Var}(Z_j)\leq \text{Var}\left(\sum_{j=s_1}^{s_2}Z_j\right)\leq \frac32\sum_{j=s_1}^{s_2}\text{Var}(Z_j)
$$
where $X_j=X_j(n)$ and $Y_j=Y_j(n)$.
\end{lemma}
and 
\begin{lemma}\label{2Lemm.1}
For all $1\leq s_1<s_2\leq k_n$  we have 
$$
\left|\frac{\text{Var}(\sum_{j=s_1}^{s_2}X_j)}{\text{Var}(\sum_{j=s_1}^{s_2}Y_j)}-1\right|\leq \cD(\ve_0)=
 \left(12\ve_0^2+2\sqrt{12}\ve_0\right)\leq\frac12.
$$
\end{lemma}

\begin{proof}[Proof of Proposition \ref{Prop.1} based on Lemmas \ref{SumVars.1} and \ref{2Lemm.1}]
First, by applying Lemma \ref{2Lemm.1} and then Lemma  \ref{SumVars.1}  we get that
\begin{equation}\label{Uppp}
\frac14 \sum_{j=s_1}^{s_2}\text{Var}(Y_j(n))\leq \text{Var}\left(\sum_{j=s_1}^{s_2}X_j(n)\right)\leq \frac94 \sum_{j=s_1}^{s_2}\text{Var}(Y_j(n)).
\end{equation}
Next, notice that $\|Y_j(n)\|_2\leq A_n+K_{2,n}\leq (1+\ve_0)A_n=c_{\ve_0}(1-2\ve_0)A_n$
where $c_{\ve_0}=\frac{1+\ve_0}{1-2\ve_0}$. Using also \eqref{Vars} we see that $\text{Var}(Y_{j}(n))\leq c_{\ve_0}^2\text{Var}(X_j(n))$. Now, recall that by \eqref{Vars} we have $\|X_j(n)\|_{L^2}\leq 4A_{n}$. Since $\|Y_{j}(n)\|_2\geq A_n$ we see that $\text{Var}(Y_{j}(n))\geq \frac1{16}\text{Var}(X_{j}(n))$. The proof of the proposition is completed by combining the above upper and lower bounds on $\text{Var}(Y_{j}(n))$ with \eqref{Uppp}.
\end{proof}

\begin{proof}[Proof of Lemma \ref{SumVars.1}]
Let us prove the first estimate. First,
$$
\text{Var}\left(\sum_{j=s_1}^{s_2}Y_j\right)=\sum_{i=s_1}^{s_2}\|Y_i\|_{L^2}^2+2\sum_{s_1\leq i<j\leq s_2}\text{Cov}(Y_i,Y_j).
$$
Since the size of the gap between $Y_i$ and $Y_{i+1}$ is at least $r_n$ and $\|Y_i\|_{2}\geq 1$, we have
\begin{equation}\label{SimTo.1}
2\left|\sum_{s_1\leq i<j\leq s_2}\text{Cov}(Y_i,Y_j)\right|\leq 2\sum_{s_1\leq i<s_2}\left|\bbE[Y_i\sum_{j>i}Y_j]\right|
\end{equation}
$$
=2\sum_{s_1\leq i<s_2}\left|\bbE\left[Y_i\bbE\big[\sum_{j>i}Y_j|Y_i\big]\right]\right|\leq 
2\sum_{s_1\leq i<s_2}\|Y_i\|_2\left\|\bbE\big[\sum_{j>i}Y_j|Y_i\big]\right\|_2$$$$
\leq 2\del_n(r_n)\sum_{s_1\leq i<s_2}\|Y_i\|_2\leq \frac12\sum_{s_1\leq i<s_2}\|Y_i\|_2^2.
$$
The proof for the $Z_j$'s is similar.
\end{proof}

\begin{proof}[Proof of Lemma \ref{2Lemm.1}]
First, notice that $\ve_0=\frac1{18}$ indeed satisfies $\cD(\ve_0)\leq \frac12$.
Denote $A_n=A$ and $K_{2,n}=K$.
We have 
$$
\text{Var}\left(\sum_{j=s_1}^{s_2}X_j\right)=\text{Var}\left(\sum_{j=s_1}^{s_2}Y_j\right)+\text{Var}\left(\sum_{j=s_1}^{s_2}Z_j\right)+2\text{Cov}\left(\sum_{j=s_1}^{s_2}Y_j,\sum_{j=s_1}^{s_2}Z_j\right).
$$
Observe that $\|Z_j\|_2\leq A\ve_0+K\leq 2A\ve_0\leq 2\ve_0\|Y_j\|_2$, and so 
by Lemma \ref{SumVars.1} we have 
$$
\text{Var}\left(\sum_{j=s_1}^{s_2}Z_j\right)\leq \frac32\sum_{j=s_1}^{s_2}\text{Var}(Z_j)\leq 
\frac{3\cdot 4\ve_0^2}{2}\sum_{j=s_1}^{s_2}\text{Var}(Y_j)\leq (12\ve_0^2)\text{Var}\left(\sum_{j=s_1}^{s_2}Y_j\right).
$$
Thus, using also the Cauchy-Schwartz inequality we have
$$
\left|\text{Var}\left(\sum_{j=s_1}^{s_2}X_j\right)-\text{Var}\left(\sum_{j=s_1}^{s_2}Y_j\right)\right|
\leq (12\ve_0^2)\text{Var}\left(\sum_{j=s_1}^{s_2}Y_j\right)+2\left\|\sum_{j=s_1}^{s_2}Z_j\right\|_{L^2}\left\|\sum_{j=s_1}^{s_2}Y_j\right\|_{L^2}
$$
$$
\leq \left(12\ve_0^2+2\sqrt{12\ve_0^2}\right)\text{Var}\left(\sum_{j=s_1}^{s_2}Y_j\right).
$$
\end{proof}

\section{The functional CLT with rates via Martingale approximation}\label{SecFunc}
Let $A_n$ be defined by \eqref{A_n def}.
Let us write $X_j(n)=X_{j,n}$. Then $\{X_{j,n}:\,1\leq j\leq k_n\}$ is a new triangular array. The idea in the proof of Theorems \ref{FunCLT} and \ref{FuncCLT MC} is to obtain functional CLT rates for the new array using martingale approximation and functional Berry-Esseen bounds for martingales. However, we first need to obtain estimates on the Prokhorov distance between the random functions corresponding to the new and original arrays. 

The structure of this section is as follows. In Section \ref{Sec5.1} we will state a few general results which will be in constant use in the course of the proof of the main results. In Section \ref{Sec5.2} we will state a few auxiliary results concerning the new array $\{X_{j,n}\}$. In Section \ref{Initial} we will make the aforementioned  reduction of the functional CLT rates to the new array $\{X_{j,n}\}$. In Section \ref{M approx} we will present the martingale approximation for the function $\cW_n(t)$ corresponding to the new array, while in Section \ref{F approx} we will provide general upper bounds on the Prokhorov distance $d_P(M_n, B)$, where $M_n$ is the approximating martingale. The rest of the sections are dedicated to estimating the quadratic variation of the martingale $M_n$ (which, as usual, is the more technical part of the proof) and complete the proof of Theorems \ref{FunCLT} and \ref{FuncCLT MC} and Corollaries \ref{Corr1} and \ref{Corr2}.

\subsection{General auxiliary results}\label{Sec5.1}
Before we begin we need the following simple lemma.
\begin{lemma}\label{MaxLem}
Let $Z_1,Z_2,...,Z_k$ be real-valued random variables defined on a common probability space. Let $M=\max\{|Z_1|,|Z_2|,...,|Z_k|\}$. Then for every $p\geq1$ we have 
$$
\|M\|_p\leq k^{\frac1{p}}\max\{\|Z_j\|_{p}:\,1\leq j\leq k\}.
$$
\end{lemma}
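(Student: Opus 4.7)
The plan is to bound the $p$-th power of the maximum by the sum of the $p$-th powers of the individual variables, then take expectations and apply a uniform bound. Concretely, since $M^p = \max_j |Z_j|^p \leq \sum_{j=1}^k |Z_j|^p$ pointwise, taking expectations gives
$$
\|M\|_p^p = \mathbb{E}[M^p] \leq \sum_{j=1}^k \mathbb{E}[|Z_j|^p] = \sum_{j=1}^k \|Z_j\|_p^p \leq k \cdot \max_{1\leq j \leq k} \|Z_j\|_p^p.
$$
Taking the $p$-th root of both sides yields the claimed estimate $\|M\|_p \leq k^{1/p} \max_j \|Z_j\|_p$.

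There is essentially no obstacle here; the only step worth noting is the pointwise inequality $\max_j a_j \leq \sum_j a_j$ for nonnegative reals $a_j = |Z_j|^p$, which is immediate. The factor $k^{1/p}$ arises precisely from replacing the sum of $k$ terms by $k$ times the maximum before extracting the $p$-th root, and this is sharp in the sense that equality essentially holds when the $|Z_j|$ have disjoint supports of equal probability. Since the statement is an auxiliary lemma used only as a crude bound in later arguments (where the slack factor $k^{1/p}$ is acceptable), no finer inequality is needed.
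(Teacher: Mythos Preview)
Your proof is correct and follows exactly the same approach as the paper: bound $M^p$ pointwise by $\sum_{j=1}^k |Z_j|^p$, take expectations, and then bound the sum by $k$ times the maximum before taking the $p$-th root.
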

\begin{proof}
We have $|M|^p\leq\sum_{k=1}^k|Z_j|^p$, and the lemma follows by taking expectation of both sides.
\end{proof}

\begin{lemma}\label{ProkAssLemma}
Let $Q_1(t),Q_2(t)$ be two random functions so that $Q_1(\cdot),Q_2(\cdot)\in D[0,1]$. Let $\cL_i$ be the law of $Q_i$, $i=1,2$. Then for every $q\geq1$,
$$
d_P(Q_1,Q_2)=d_P(\cL_1,\cL_2)\leq\left\|\sup_{t\in[0,1]}|Q_1(t)-Q_2(t)|\right\|_q^{\frac{q}{q+1}}
$$
where $d_P$ is the Prokhorov metric on $D[0,1]$.
\end{lemma}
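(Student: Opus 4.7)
The plan is a standard coupling-plus-Markov argument for the Prokhorov metric. Since $Q_1,Q_2$ are defined on the same probability space (this is implicit from the RHS making sense), they give a coupling of their laws $\cL_1,\cL_2$, and I can use the classical fact that for any coupling $(X,Y)$ on a metric space, if $\bbP(d(X,Y)>\ve)\leq\ve$ for some $\ve>0$ then $d_P(\cL_X,\cL_Y)\leq\ve$.

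First I would verify that coupling bound in our concrete setting. Let $D=\sup_{t\in[0,1]}|Q_1(t)-Q_2(t)|$, which is exactly the uniform distance on $D[0,1]$ between the two (random) paths. For any Borel set $B\subset D[0,1]$ and any $\ve>0$, decompose
$$
\bbP(Q_1\in B)\leq \bbP(Q_1\in B,\,D\leq\ve)+\bbP(D>\ve)\leq \bbP(Q_2\in B^\ve)+\bbP(D>\ve),
$$
because on the event $\{D\leq\ve\}$, the point $Q_1\in B$ forces $Q_2$ to lie within uniform distance $\ve$ of $B$, i.e.\ in $B^\ve$. Hence whenever $\bbP(D>\ve)\leq\ve$ we have $d_P(\cL_1,\cL_2)\leq\ve$ by the very definition of the Prokhorov distance.

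Next I would optimize $\ve$ using Markov's inequality. For any $q\geq 1$,
$$
\bbP(D>\ve)\leq \ve^{-q}\|D\|_q^q,
$$
so the condition $\bbP(D>\ve)\leq\ve$ is implied by $\ve^{-q}\|D\|_q^q\leq\ve$, i.e.\ by $\ve\geq\|D\|_q^{q/(q+1)}$. Choosing $\ve=\|D\|_q^{q/(q+1)}$ (the case $\|D\|_q=0$ being trivial) gives the claimed bound.

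There is no serious obstacle here; the only thing to be a bit careful about is that the Prokhorov distance is symmetric (one also needs $\bbP(Q_2\in B)\leq \bbP(Q_1\in B^\ve)+\ve$, which follows by the same decomposition applied with the roles of $Q_1$ and $Q_2$ swapped, since $D$ is symmetric in $Q_1,Q_2$), and that the argument does not require any separability/measurability beyond what is already guaranteed by $Q_1,Q_2$ being $D[0,1]$-valued random elements in the usual sense.
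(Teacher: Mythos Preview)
Your argument is correct and follows essentially the same route as the paper's proof: use the coupling observation that $\bbP(D>\ve)\leq\ve$ implies $d_P(\cL_1,\cL_2)\leq\ve$, bound $\bbP(D>\ve)$ by Markov's inequality, and then optimize by taking $\ve=\|D\|_q^{q/(q+1)}$. You have in fact supplied more detail than the paper (the explicit decomposition over $\{D\leq\ve\}$ and the remark on symmetry), but the underlying approach is identical.
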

\begin{proof}
First, it follows from the definition of $d_P$ that $d_P(Q_1,Q_2)\leq \ve_0$ if $$\bbP(\sup_{t\in[0,1]}|Q_1(t)-Q_2(t)|\geq \ve_0)\leq \ve_0.$$ By the Markov inequality we have 
$$
\bbP(\sup_{t\in[0,1]}|Q_1(t)-Q_2(t)|\geq \ve_0)\leq\left\|\sup_{t\in[0,1]}|Q_1(t)-Q_2(t)|\right\|_q^q\ve_0^{-q}
$$
and the lemma follows by taking $\ve_0=\left\|\sup_{t\in[0,1]}|Q_1(t)-Q_2(t)|\right\|_q^{\frac{q}{q+1}}$.
\end{proof}

In the course of the proof of Theorems \ref{FunCLT} and \ref{FuncCLT MC} we will use several times the following result \cite[Proposition 7]{MPU0}:
\begin{proposition}\label{Prp7}
Let $X_1,...,X_m$ be square integarble random variables, and set $\mathscr F_i=\sig\{X_1,...,X_i\}$. Let us fix some $p\geq2$ and set 
$$
b_i=\max_{i\leq l\leq m}\left\|X_i\sum_{k=i}^{l}\bbE[X_k|\mathscr F_i]\right\|_{p/2}.
$$
Set also $S_k=\sum_{j=1}^k X_j$. Then 
$$
\|S_m\|_p\leq\left(2p\sum_{i=1}^{m}b_i\right)^{1/2}
$$
and with $M=\max_{1\leq k\leq m}|S_k|$, 
$$
\|M\|_p\leq C_p\left(\sum_{i=1}^{m}b_i\right)^{1/2}
$$
where $C_2=16$ and $C_p=\left(1-2^{(1-p)/2p}\right)^{-2p}(2p)^{p/2}$ for $p>2$.
\end{proposition}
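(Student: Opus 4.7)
The plan is to prove the proposition in two stages: first establish the non-maximal bound $\|S_m\|_p^2 \leq 2p\sum_i b_i$, then bootstrap to the maximal inequality by a Móricz-type dyadic chaining argument.

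Stage one starts from an algebraic identity obtained by telescoping the squares of the reversed partial sums $T_i := \sum_{k=i}^m X_k$ (with $T_{m+1}=0$): since $T_i^2 - T_{i+1}^2 = X_i(T_i+T_{i+1}) = X_i(2T_i - X_i)$, summing yields
$$
S_m^2 \;=\; T_1^2 \;=\; 2\sum_{i=1}^m X_i T_i \;-\; \sum_{i=1}^m X_i^2.
$$
For $p=2$, taking expectations, using $\bbE[X_i T_i] = \bbE\bigl[X_i\,\bbE[T_i\mid\mathscr F_i]\bigr]$, dropping the non-positive term $-\sum\bbE[X_i^2]$, and bounding via $\|\cdot\|_1 \leq \|\cdot\|_{p/2}$ together with the definition of $b_i$ (taking $l=m$) already gives $\bbE[S_m^2] \leq 2\sum_i b_i$. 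For $p>2$ one iterates the same idea inside the $L^{p/2}$-norm: split $T_i = \bbE[T_i\mid\mathscr F_i] + \bigl(T_i - \bbE[T_i\mid\mathscr F_i]\bigr)$, control the first piece directly by $b_i$, and expose the reversed-martingale structure of the second piece so that Burkholder's square-function inequality applies. The sharp Burkholder constant of order $\sqrt p$ is what produces the $\sqrt{2p}$ factor in the final bound $\|S_m\|_p \leq \bigl(2p\sum_i b_i\bigr)^{1/2}$.

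Stage two turns this into a maximal inequality. Applying stage one to each subblock $\{X_{i+1},\ldots,X_j\}$ furnishes $\|S_j-S_i\|_p^2 \leq g(i,j) := 2p \sum_{k=i+1}^j b_k$, and $g$ is manifestly superadditive: $g(i,k) + g(k,j) = g(i,j)$. Móricz's 1976 dyadic chaining argument then yields $\|M\|_p^2 \leq K_p\,g(0,m)$ with a constant $K_p$ obtained by summing a geometric series over dyadic scales; combined with the $2p$ factor from stage one this reproduces the stated $C_p = (1 - 2^{(1-p)/2p})^{-2p}(2p)^{p/2}$. For $p=2$ the dyadic constants can be tallied explicitly to yield $C_2=16$.

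The main technical obstacle lies in the $p>2$ step of stage one: the cross term $X_i\bigl(T_i - \bbE[T_i\mid\mathscr F_i]\bigr)$ couples an $\mathscr F_i$-measurable weight with a future-dependent increment, so a crude triangle-inequality estimate would require an a priori $L^p$ bound on $T_i$ itself and leave the argument circular. The resolution is to recognise $T_i - \bbE[T_i\mid\mathscr F_i]$ as a reversed-martingale increment with respect to a decreasing filtration and invoke Burkholder's inequality with its sharp $\sqrt p$-constant; this is precisely the step that supplies the linear-in-$p$ factor in the final bound, rather than an absolute constant.
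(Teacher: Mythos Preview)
The paper does not prove this proposition; it is quoted directly from \cite[Proposition 7]{MPU0}, so there is no in-house argument to compare against. Your Stage~2 is correct and is exactly the mechanism used in the original: once the non-maximal bound is known for every subblock, M\'oricz's dyadic chaining with the superadditive function $g(i,j)=2p\sum_{k=i+1}^{j}b_k$ gives the maximal inequality with the stated constants, and the maximum over $l$ in the definition of $b_i$ is there precisely so that the block-wise $b_k$ dominate the ones for shorter blocks. Your Stage~1 for $p=2$ is also fine.

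The gap is in Stage~1 for $p>2$. After the pointwise bound $S_m^2\le 2\sum_i X_iT_i$ and the split $T_i=\bbE[T_i\mid\mathscr F_i]+(T_i-\bbE[T_i\mid\mathscr F_i])$, the cross term $\sum_i X_i\bigl(T_i-\bbE[T_i\mid\mathscr F_i]\bigr)$ does not carry any usable (reversed-)martingale structure: each summand depends simultaneously on the past through $\bbE[T_i\mid\mathscr F_i]$ and on the future through $T_i$, so it is not adapted to any monotone filtration, and the partial sums $\sum_{i\le j}X_i(T_i-\bbE[T_i\mid\mathscr F_i])$ are not $\mathscr F_j$-measurable. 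Worse, if you expand this sum by Fubini you recover $\sum_k X_kS_{k-1}=\tfrac12(S_m^2-\sum X_k^2)$ plus a remainder, so the argument is genuinely circular, not merely awkward. Even if some square-function bound applied, its output would involve $\|X_i(T_i-\bbE[T_i\mid\mathscr F_i])\|_{p/2}$, which is not controlled by $b_i$ (the latter sees only the projected tail).

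The proof in \cite{MPU0} (going back to Dedecker--Doukhan) avoids this entirely: it telescopes $|S_m|^p$ rather than $S_m^2$, writing $|S_m|^p=\sum_k\bigl(|S_k|^p-|S_{k-1}|^p\bigr)$ and bounding each increment via an elementary convexity inequality for $x\mapsto|x|^p$. Taking expectations and conditioning on $\mathscr F_k$ is what makes $\bbE[T_k\mid\mathscr F_k]$ (hence $b_k$) appear, and a single application of H\"older with exponents $p/(p-2)$ and $p/2$ closes the loop as $\|S_m\|_p^p\le 2p\,\|S_m\|_p^{p-2}\sum_i b_i$, i.e.\ $\|S_m\|_p^2\le 2p\sum_i b_i$. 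No Burkholder step is needed for the non-maximal part.
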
 
We note that for martingales this proposition essentially reduces to a combination of Doob's maximal inequality and the Burkholder inequality (note that in this case $b_i=X_i^2$).

\subsection{Auxiliary results related to the new array $\{X_{j,n}\}$}\label{Sec5.2}
Let us write $X_j(n)=X_{j,n}=\sum_{k=a_j(n)}^{b_j(n)}\xi_{k,n}$.
 Then, by \eqref{Vars} we have $\|X_{j,n}\|_2\leq 4A_n$, and so by  \eqref{MaxMom}  for each $j$ and $n$,
\begin{equation}\label{Then}
\|X_{j,n}\|_{q}\leq 4\beta_n A_n.
\end{equation}
Next, let
$$
\varpi_{q,p,n}(k)=\sup\{\varpi_{q,p}(\cF_n(s),\cF_n(s+k,n)): s\leq n-k\}
$$
where $\varpi_{q,p}$ are the mixing coefficient defined in \eqref{var pi def}, and the $\sig$-algebras $\cF_{n}(s)$ and $\cF_n(s+k,n)$ are defined after \eqref{al def}.
 Then by \eqref{MixCoe}, 
\begin{equation}\label{Var po rel}
\varpi_{q,p,n}(k)\leq 2\left(\al_n(k)\right)^{1/p-1/q}.
\end{equation}

In the course of the proof of Theorems \ref{FunCLT} and \ref{FuncCLT MC} we will need the following result.
\begin{lemma}\label{Norm Con}

There is a constant $C=C_{p}>0$ so that for all indexes $0<u_1<u_2<...<u_l\leq k_n$, $l\in\bbN$ we have 
\begin{equation}\label{Norm Con Need}
\left\|\max_{1\leq s\leq l}\Big|\sum_{j=1}^{s}X_{u_j,n}\Big|\right\|_{p}\leq C_pQ_n\sqrt{l}
\end{equation}
where $Q_n=\sqrt{\Gamma_n(1)}A_n\beta_n$ (and $\Gamma_n$ and $A_n$ are defined in \eqref{Gamma n def} and \eqref{A_n def}, while $\beta_n$ comes from \eqref{MaxMom}).

\end{lemma}
\begin{proof}
Let $b_i=b_{i,n}(\{u_j\})=\|X_{u_i,n}\sum_{j=i}^{l}\bbE[X_{u_j,n}|X_{u_1,n},...,X_{u_i,n}]\|_{p/2}$. Then by the H\"older inequality and the definition of the mixing coefficients $\varpi_{p,q}$ and \eqref{Var po rel} we have
$$
\|b_i\|_{p}\leq\|X_{u_i,n}\|_{p}\max\{\|X_{k,n}\|_{q}\}\left(1+\sum_{k=1}^{k_n}\varpi_{p,q,n}(k)\right)\leq C(\beta_n A_n)^2(1+\sum_{k}(\al_n(k))^{1/p-1/q})$$$$\leq 8(\beta_n A_n)^2(1+\Gamma_n(1))
$$
where we have also used that $\|X_{j,n}\|_q=O(A_n\beta_n)$,
Now the lemma follows from Proposition \ref{Prp7}.
\end{proof}
\begin{remark}
As noted in Remark \ref{R beta 2}, when $\sig_n^2$ grows linearly fast in $n$ we set  $A_n=1$,  $\beta_n=K_{q,n}$ and $X_{j,n}=\xi_{j,n}$. Then (as can be seen from its proof) Lemma \ref{Norm Con} remains true also in that case.
\end{remark}

 Next, set $\sig_{k,n}=\|S_{k}^{(n)}\|_2=\|\sum_{j=1}^{k}\xi_{j,n}\|_2$, $\sig_n=\sig_{n,n}$ and for every $t\in[0,1]$ set
 $$v_n(t)=\inf\{1\leq k\leq n:\,\sig_{k,n}^2\geq \sig_n^2t\}.$$
 Let $B_{j_n(t)}(n)$ be the block (recall \eqref{B j n})  so that $v_{n}(t)\in B_{j_n(t)}(n)$.
 We will also use the following result.
\begin{lemma}\label{Lemm0}
There is A constant $C>0$ so that for all $t\in[0,1]$,
$$
|\sig_{b_{j_n(t)},n}^2-t\sig_{n}^2|\leq CA_n^2(1+K_{p,n}^2)\Gamma_n(1).
$$
\end{lemma}
\begin{proof}
Because Lemma \ref{lemm del} we have 
$$\max_k|\sig_{k,n}^2-\sig_{k-1,n}^2|=\max_k|\text{Var}(\xi_{k,n})-2\text{Cov}(\xi_{k,n},S_{k-1}^{(n)})|\leq K_{2,n}^2+2K_{p,n}^2\sum_{j\geq 1}(\al_n(j))^{1/p-1/q}:=U_n$$
Taking $k=v_n(t)$ we have $\sig_{k,n}^2\geq t\sig_n^2>\sig_{k-1,n}^2\geq \sig_{k,n}^2-U_n$ 
and so
$$
|\sig_{v_n(t),n}^2-t\sig_n^2|\leq U_n.
$$

Next, let  $a_j(n)$ be the left end point of $B_j(n)$. Then for every $j$ and $m\in B_j(n)$  we have 
$$
\left\|S_{b_j(n)}^{(n)}-S_m^{(n)}\right\|_{2}^2=\left\|X_{j,n}-\sum_{j=a_j(n)}^{m}\xi_{j,n}\right\|_{2}\leq \|X_{j,n}\|_2+\left\|\sum_{j=a_j(n)}^{m}\xi_{j,n}\right\|_{2}.
$$
Now, by \eqref{Then} have $\text{Var}(X_{j,n})=\|X_{j,n}\|_2^2\leq 16A_n^2$ and by \eqref{Vars} we also have $\|\sum_{j=a_j(n)}^{m}\xi_{j,n}\|_{2}\leq C_0A_n^2$ for some constant $C_0$, and so 
\begin{equation}\label{I.1}
\max_{j}\max_{m\in B_j(n)}\|S_{b_j(n)}^{(n)}-S_m^{(n)}\|_{2}^2\leq CA_n^2.
\end{equation}

Finally, since $v_n(t)\in B_{j_n(t)}$ using 
 we obtain that 
$$|\sig_{b_{j_n(t)},n}^2-\sig_{v_n(t),n}^2|\leq \left\|S_{b_{j_n(t)}}^{(n)}-S_{v_n(t)}^{(n)}\right\|_2^2+2\left|\text{Cov}(S_{b_{j_n(t)}}^{(n)}-S_{v_n(t)}^{(n)},S_{v_n(t)}^{(n)})\right|$$
$$
=O(A_n^2+A_n\del_n(1))
$$
and the lemma follows using also Lemma \ref{lemm del}.
\end{proof}

\subsection{Initial approximation: reduction to the new array $\{X_{j,n}\}$}\label{Initial}
In this section we will complete the final step of the approximation of  $W_n$ by the function $\cW_n$ generated by the new array $\{X_{j,n}\}$. 
Let us define 
$$
\cW_{n}(t)=\sig_n^{-1}\sum_{j=1}^{b_{j_n(t)}(n)}\xi_{j,n}=\sig_n^{-1}\sum_{u=1}^{j_n(t)}X_{u,n}.
$$

\begin{lemma}\label{ApproxLemma}
We have
$$
\left\|\sup_{t\in[0,1]}|W_{n}(t)-\cW_{n}(t)|\right\|_{q}\leq C A_n^{1-2/q}\beta_n\sig_n^{-(1-2/q)}.
$$
\end{lemma}
\begin{proof}
For each $t\in[0,1]$ there is a unique $1\leq k\leq k_n$ so that $v_n(t)\in B_k(n)$, and in this case $j_n(t)=k$. Thus,
\begin{equation}\label{I}
\sup_{t\in[0,1]}|\cW_{n}(t)-W_{n}(t)|
\leq\sig_n^{-1}\max\{Z_{k,n}: 1\leq k\leq k_n\}
\end{equation}
where 
$$
Z_{k,n}=\max\{|S_{b_k(n)}^{(n)}-S_m^{(n)}|: m\in B_k(n)\}. 
$$
By \eqref{MaxMom} and \eqref{I.1} we have
\begin{equation}\label{II.1}
\max\{\|Z_{k,n}\|_{q}:\,1\leq k\leq k_n\}\leq C\beta_n A_n.
\end{equation}
 Now, by Lemma \ref{MaxLem},
we have
$$
\left\|\max\{|Z_{k,n}|: 1\leq k\leq k_n\}\right\|_{q}\leq (k_n)^{1/p}\max\{\|Z_{k,n}\|_{q}: 1\leq k\leq k_n\}.
$$
Finally, by Proposition \ref{Prop.1} we have $k_n\leq C\sig_n^2/A_n^2$ and
 the lemma follows by \eqref{I}, \eqref{II.1} and the above inequality.
\end{proof}
\begin{corollary}\label{Cor Approx}
We have
\begin{equation}\label{CorInq}
d_P(W_n,\cW_n)\leq C\left(A_n/\sig_n\right)^{\frac{q-2}{q+1}}\beta_n^{\frac{q}{q+1}}
\end{equation}
where $d_P$ is the Prokhorov metric of probability laws on the Skorokhod space $D[0,1]$.
\end{corollary}

\begin{proof}
The corollary follows from the combinations of Lemma \ref{ApproxLemma} and Lemma \ref{ProkAssLemma}.
\end{proof}

\subsection{Martingale approximation (for the new array)}\label{M approx}
In view of Corollary \ref{Cor Approx}, our goal is to estimate $d_P(\cW_n,B)$, where $B$ is a standard  Brownian motion. To establish that let us first present a certain type of   martingale-coboundary representation of the sums $S_m^{(n)}=\sum_{j=1}^m\xi_{j,n}$. 
Set 
$$
d_{j,n}=\xi_{j,n}+R_{j,n}-R_{j-1,n},\,\,R_j=\sum_{n\geq s\geq j+1}\bbE[\xi_{s,n}|\zeta_{1,n},\zeta_{1,n},...,\zeta_{j,n}].
$$
Let us also set 
$$
D_{j,n}=\sum_{k\in B_j(n)}d_{k,n}=X_{j,n}+R_{b_{j}(n),n}-R_{a_j(n)-1,n}
$$
where $B_j(n)=\{a_j(n),...b_j(n)\}$ are defined in \eqref{B j n}.
\begin{lemma}\label{M Lemma}
Let $K_{q,n}=\max_j\|\xi_{j,n}\|_{q}$. Then
$$
\|R\|_{p,n}:=\max_j\|R_{j,n}\|_{q}\leq  K_{q,n}\Gamma_n(1):=a_n
$$
and so, for every fixed $n$,\, $\{D_{j,n}:1\leq j\leq k_n\}$ is a  martingale difference with respect to the filtration  $\cG_{j,n}:=\cF_n(b_j(n))=\sig\{\zeta_{1,n},\zeta_{2,n},...,\zeta_{b_j(n),n}\}$. Moreover,  
$$
\|\cD_{j,n}\|_{p}\leq C_p(A_n\beta_n+a_n)
$$
for some $C_{p}>0$ which depends only on $p$.
\end{lemma}

\begin{proof}
By the definition \eqref{var pi def} of $\varpi_{q,p}(\cdot,\cdot)$ we have 
$$
\|R\|_{p,n}\leq K_{q,n}\sum_{j\geq1}\varpi_{q,p,n}(j).
$$
Using \eqref{Var po rel} we have 
$$
\sum_{1\leq j\leq n}\varpi_{q,p,n}(j)\leq 2\sum_{1\leq j\leq n}(\al_n(j))^{1/p-1/q}=2\Gamma_n(1).
$$
Hence 
$$
\|\cD_{j,n}\|_{p}\leq 2\|R\|_{p,n}+\max_{j}\|X_{j,n}\|_p
$$
and the desired estimate on $\|\cD_{j,n}\|_p$ follows from \eqref{Then} (recalling that $q>p$). 
Finally, given that $\cD_{j,n}$ is well defined it is immediate to check that $\{D_{j,n}: 1\leq j\leq k_n\}$ is a martingale difference with respect to the filtration $\{\cG_{j,n}: 1\leq j\leq k_n\}$.
\end{proof}

Let us consider the martingale-difference array $\cD_{j,n}=\sig_n^{-1}D_{j,n}$. Then  there is a constant $C=C_{q,p}>0$ so that  
\begin{equation}\label{LP B}
\max_{j}\|\cD_{j,n}\|_{q}\leq C(A_n\beta_n+a_n)/\sig_n.
\end{equation}
Set $M_n(t)=\sum_{j=1}^{b_{j_n(t)}}\cD_{j,n}$, which for each fixed $n$ is a continuous time martingale with respect to the filtration $\cH_{t}^{(n)}=\cG_{j_n(t),n}$. 

\begin{lemma}\label{LappR}
For each $n$ we have
\begin{equation}\label{Approx3.1}
d_P(\cW_n,M_n)\leq C\sig_n^{-\frac{p-2}{p+1}}a_n^{\frac{p}{p+1}}A_n^{-\frac{2}{p+1}}.
\end{equation}
\end{lemma}
\begin{proof}

We have
$$
\sup_{t\in[0,1]}\left\|\cW_n(t)-M_n(t)\right\|_{p}\leq 2\|R\|_{p,n}\sig_n^{-1}.
$$
Applying now Lemma \ref{MaxLem} we get that
\begin{equation}\label{MartApprox}
\left\|\sup_{t\in[0,1]}\left|\cW_n(t)-M_n(t)\right|\right\|_{p}\leq C\sig_n^{2/p}A_n^{-2/p}\|R\|_{p,n}\sig_n^{-1}
\end{equation}
where we have used that the $\cW_n$ and $M_n$ has at most $k_n$ deterministic jumps and $k_n\leq c\sig_n^2/A_n^2$.
Now the second part follows by applying  Lemma \ref{M Lemma} and then Lemma \ref{ProkAssLemma}.
\end{proof}

\subsection{Functional approximation of the martingale by a Brownian motion}\label{F approx}
In view of Lemma \ref{LappR}, our goal now is to estimate $d_P(M_n(t),B(t))$. By applying \cite[Theorem 4]{Courbot} with the martingale $M_n(\cdot)$ we get that 
\begin{equation}\label{CourApp}
d_P(M_n(\cdot),B(\cdot))\leq C_{p}\left(\tilde k_n^{1/2}|\ln \tilde k_n|^{1/2}+L_{p,n}^{\frac{1}{2p}}|\ln L_{p,n}|^{3/4}\right)
\end{equation}
where
$$\big<M_n\big>_t=\sum_{j=1}^{j_n(t)}\bbE[\cD_{j,n}^2|\cG_{j-1,n}],\,\cG_{j,n}=\cF_n(b_j(n))$$ 
is the quadratic variation of $M_n$,
$$
\tilde k_n=\inf\left\{\ve>0:\, \bbP\left(\sup_{t\in[0,1]}\left|\big<M_n\big>_t-t\right|>\ve\right)<\ve\right\}.
$$
is the, so-called, Ky-Fan distance between $\big<M_n\big>$ and $\big<B\big>$ 
 and 
$$
L_{p,n}=\sum_{j=1}^{j_n(1)}\|\cD_{j,n}\|_{p}^{p}
$$
is the $p$-th Lyapunov's sum.
Observe next that for every $s\geq 1$ we have\footnote{This is obtained by considering $\ve$ close to the right hand side of \eqref{QV1} and using the Markov inequality.}
\begin{equation}\label{QV1}
\tilde k_n\leq\left\|\sup_{t\in[0,1]}\left|\big<M_n\big>_t-t\right|\right\|_{s}^{\frac{s}{s+1}},
\end{equation}
that $j_n(1)\leq k_n+1=O(\sig_n^2/A_n^2)$
and that
\begin{equation}\label{Lyp}
L_{p,n}^{\frac1{2p}}\leq C\sig_n^{-\frac{p-2}{2p}}A_n^{-\frac{1}{p}}\max_{j}\|D_{j,n}\|_{p}.
\end{equation}
Therefore, in order to prove Theorems \ref{FunCLT} and \ref{FuncCLT MC} it is enough to provide estimates on the right hand side of \eqref{QV1}, which is the purpose of the next section.

\subsection{Completing the proof: quadratic variation estimates}
Our main result in this section is the following.

\begin{proposition}\label{QVEST}
Let $l_n\leq \frac{\sig_n^2}{18 A_n}$. Then there is a constant $	C>0$ so that
\begin{equation}\label{Goal}
\left\|\sup_{t\in[0,1]}\left|\big<M_n\big>_t-t\right|\right\|_{p/2}\leq C\min(\cR_1(n,l_n),\cR_2(n,l_n))
\end{equation}
where, with  $Q_n=\sqrt{\Gamma_n(1)}A_n\beta_n$,  $a_n=K_{q,n}\Gamma_n(1)$  and
$$
\cR_n(l)=\left(\beta_n+a_ n A_n^{-1}+\beta_n^2A_n\sqrt{l\Gamma_n(1)}\right)\sig_n^{-1}+\left(\frac{l^{1-2/p}(a_n^2+A_n^2\beta_n^2)}{\sig_n^{(2-4/p)}A_n^{4/p}}+\frac{l^{1/2}(Q_n+a_n^2)}{\sig_n A_n}\right)
$$
$$
+\left(a_n^2+Q_na_nA_n^{-1}\sig_n+A_n^2(1+K_{p,n}^2)\Gamma_n(1)\right)\sig_n^{-2}
$$
we have
$$
\cR_1(n,l)=\cR_n(l)+a_n(Q_nl^{-1/2}+a_nl^{-1})A_n^{-2}
$$
and 
$$
\cR_2(n,l)=\cR_n(l)+w(n,l)r_n(p,l)+x(n,l)\sig_n^{-1}
$$
where $r_n(p,l)$ was defined in \eqref{r def},
$$
w(n,l)=\frac{a_n}{A_n^2 l}+\frac{Q_n}{A_n^2l^{1/2}}+\frac{\sqrt{\Gamma_n(1)}a_n}{\sig_n A_n l^{1/2}}+
\frac{\sqrt{\Gamma_n(1)}Q_n}{\sig_n A_n}
$$
and 
$$
x(n,l)=\left(a_nl^{-1/2}+a_n^2l^{-1/2}+Q_n\right).
$$
\end{proposition}

\subsection{Proof of the functional CLT rates based on Proposition \ref{QVEST}}

\subsection*{Proof of Theorems \ref{FunCLT} and \ref{FuncCLT MC}}\label{Sec comp}
By combining  Corollary \ref{Cor Approx}, Lemma \ref{LappR} we see that 
$$
d_P(W_n,M_n)\leq d_P(W_n,\cW_n)+d_P(\cW_n,M_n)\leq 
C\left(A_n/\sig_n\right)^{\frac{q-2}{q+1}}\beta_n^{\frac{q}{q+1}}+C\sig_n^{-\frac{p-2}{p+1}}a_n^{\frac{p}{p+1}}A_n^{-\frac{2}{p+1}}.
$$
where $a_n=K_{q,n}\beta_n\Gamma_n(1)$. Next, by \eqref{CourApp} we have
$$
d_P(M_n(\cdot),B(\cdot))\leq C_{p}\left(\tilde k_n^{1/2}|\ln \tilde k_n|^{1/2}+L_{p,n}^{\frac{1}{2p}}|\ln L_{p,n}|^{3/4}\right).
$$
Next, by combining \eqref{LP B} and \eqref{Lyp}, we have
\begin{equation}\label{Lyp1}
L_{p,n}^{\frac1{2p}}\leq C\sig_n^{-\frac{p-2}{2p}}A_n^{-\frac{1}{p}}(A_n\beta_n+a_n)/\sig_n.
\end{equation}
Hence, in order to complete the proof of Theorems \ref{FunCLT} and \ref{FuncCLT MC} it is enough to estimate $\tilde k_n$, which is obtained by
 combining \eqref{QV1} with $s=p/2$ and Proposition \ref{QVEST}. All that is left to do in order to complete the proof of  Theorems \ref{FunCLT} and \ref{FuncCLT MC} is to gather the powers of $\sig_n^{-1}$ and compare between some of the other expressions in the resulting upper bounds on $d_P(W_n, B)$ to get the desired forms \eqref{ProkEst} and \eqref{ProkEst3} (this results in the terms $C_{p}C(n;q)$ appearing in Theorems \ref{FunCLT} and \ref{FuncCLT MC}).

\subsection{Proof of Corollaries \ref{Corr1} and \ref{Corr2}}
As discussed in Section \ref{MMsec}, in the circumstances of both Corollaries \ref{Corr1} and \ref{Corr2}  we can take $\beta_n=O(1)$. Moreover, since $K_{q,n}$ is bounded in $n$ and 
$$
\Gamma_n(m)\leq\sum _{j=m}^\infty(\al(j))^{1/p-1/p}
$$
under the conditions of Corollary \ref{Corr1}  we have that $\sup_n\Gamma_n^{-1}(\frac1{8 K_{q,n}})<\infty$. Since $\sup_n K_{2,n}<\infty$ we conclude that $\sup_n A_n<\infty$. Since also $\sup_n\Gamma_n(1)\leq \sum_{j=1}^\infty(\al(j))^{1/p-1/p}<\infty$ we see that 
\begin{equation}\label{Eqq}
\sup_n C(n;q)<\infty.
\end{equation}
Now, the proof of Corollary \ref{Corr1} is completed by taking $l_n=[\sig_n]$. To prove Corollary \ref{Corr2} we take $l_n=2$, so that $r_n(p,[l_n/2])=0$ and 
$$
w_n=2\sig_n^{-2(1-2/p)}+\sqrt 2\sig_n^{-1}
$$ 
which together with \eqref{ProkEst3} and  \eqref{Eqq} completes the proof of Corollary \ref{Corr2}.

\subsubsection{Proof of Proposition \ref{QVEST}}
We will split the proof into three steps, each one will be formulated as a lemma (and it will be clear that  Proposition \ref{QVEST} follows by combining all three).
The first one is the follows result.
\begin{lemma}\label{Ap1Lem}
There is a constant $C=C_{p}>0$ so that for all $n\in\bbN$,
$$
\left\|\sup_{t\in[0,1]}\left|\big<M_n\big>_t-\sum_{j=1}^{j_n(t)}\cD_{j,n}^2\right|\right\|_{p/2}\leq C\max_j\|D_{j,n}\|_{p}\sig_n^{-1}A_n^{-1}.
$$
\end{lemma}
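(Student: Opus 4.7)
The plan is to recognize the difference $\big<M_n\big>_t - \sum_{j=1}^{j_n(t)}\cD_{j,n}^2$ as a discrete-time martingale and then apply the maximal moment inequality of Proposition \ref{Prp7}. Specifically, set
$$
Y_{j,n} := \cD_{j,n}^2 - \bbE[\cD_{j,n}^2\mid\cG_{j-1,n}],
$$
so that $\{Y_{j,n}\}_{j}$ is a martingale difference sequence with respect to $\{\cG_{j,n}\}$ (both $\cD_{j,n}$ and $\cD_{j,n}^2$ are $\cG_{j,n}$-measurable since $D_{j,n}$ is a sum over $B_j(n)$ of martingale differences adapted to $\cF_{0,\cdot}^{(n)}$), and $\big<M_n\big>_t - \sum_{j=1}^{j_n(t)}\cD_{j,n}^2 = -\sum_{j=1}^{j_n(t)} Y_{j,n}$. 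Because $j_n(t)$ takes only integer values in $\{1,\ldots,k_n\}$, the supremum over $t\in[0,1]$ collapses to the discrete maximum $\max_{1\leq k\leq k_n}\bigl|\sum_{j=1}^{k} Y_{j,n}\bigr|$, which is what I will estimate in the $L^{p_0/2}$ norm.

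The main step is to apply Proposition \ref{Prp7} to $\{Y_{j,n}\}$ with exponent $p = p_0/2$. For a martingale difference sequence the quantity $b_i$ in that proposition simplifies dramatically: by the tower property and the martingale property, $\bbE[Y_{k,n}\mid\sig(Y_{1,n},\ldots,Y_{i,n})] = 0$ for every $k > i$, so $\sum_{k=i}^{\ell}\bbE[Y_{k,n}\mid\sig(Y_{1,n},\ldots,Y_{i,n})] = Y_{i,n}$ and $b_i = \|Y_{i,n}^2\|_{p_0/4} = \|Y_{i,n}\|_{p_0/2}^{2}$. Proposition \ref{Prp7} then gives
$$
\Bigl\|\max_{1\leq k\leq k_n}\Bigl|\sum_{j=1}^{k} Y_{j,n}\Bigr|\Bigr\|_{p_0/2} \leq C_{p_0}\Bigl(\sum_{j=1}^{k_n}\|Y_{j,n}\|_{p_0/2}^{2}\Bigr)^{1/2} \leq C_{p_0}\sqrt{k_n}\,\max_j\|Y_{j,n}\|_{p_0/2}.
$$

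It then remains to bound the individual $L^{p_0/2}$-norms. The triangle inequality together with the $L^{p_0/2}$-contraction property of conditional expectation gives $\|Y_{j,n}\|_{p_0/2} \leq 2\|\cD_{j,n}^2\|_{p_0/2} = 2\|\cD_{j,n}\|_{p_0}^{2} = 2\sig_n^{-2}\|D_{j,n}\|_{p_0}^{2}$. Inserting $k_n \leq \sig_n^2/Q_n$ from Proposition \ref{VLT} produces the estimate
$$
\sqrt{k_n}\,\max_j\|Y_{j,n}\|_{p_0/2} \leq 2\,\sig_n Q_n^{-1/2}\cdot\sig_n^{-2}\max_j\|D_{j,n}\|_{p_0}^{2} = 2\,\sig_n^{-1}Q_n^{-1/2}\max_j\|D_{j,n}\|_{p_0}^{2},
$$
which is of the general form asserted in the lemma.

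The main obstacle in this plan is the regime $p_0\in(2,4)$, in which $p_0/2 < 2$ and Proposition \ref{Prp7} does not apply directly (it requires $p \geq 2$). In this sub-case I would replace Proposition \ref{Prp7} by Doob's maximal inequality (which needs only $p_0/2 > 1$) combined with the Burkholder-Davis-Gundy bound $\|\sum_j Y_{j,n}\|_{p_0/2} \leq C\,\|(\sum_j Y_{j,n}^{2})^{1/2}\|_{p_0/2}$, and then exploit subadditivity of $x \mapsto x^{p_0/4}$ (valid since $p_0/4 \leq 1$) to control the right-hand side by $\bigl(\sum_j \|Y_{j,n}\|_{p_0/2}^{p_0/2}\bigr)^{2/p_0} \leq k_n^{2/p_0}\max_j\|Y_{j,n}\|_{p_0/2}$. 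Feeding in $k_n^{2/p_0} \leq \sig_n^{4/p_0}Q_n^{-2/p_0}$ then yields a bound of the same qualitative shape; the multiplicative constant depends only on $p_0$, as required.
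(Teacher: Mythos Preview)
Your argument is essentially the paper's own proof: define the martingale differences $Y_{j,n}=\cD_{j,n}^2-\bbE[\cD_{j,n}^2\mid\cG_{j-1,n}]$, apply Proposition~\ref{Prp7} with $p=p_0/2$ (which for martingales collapses to $b_i=\|Y_{i,n}\|_{p_0/2}^2$), and conclude via $k_n\le\sig_n^2/Q_n$. Two remarks are worth recording. First, your computation correctly produces $\max_j\|D_{j,n}\|_{p_0}^{2}$ rather than the first power appearing in the lemma's statement; the paper's own displayed line ``$=O(\max_j\|D_{j,n}\|_{p_0}\sig_n^{-1}Q_n^{-1/2})$'' has the same issue, so this seems to be a typo in the statement rather than an error on your part (and is harmless downstream since $\|D_{j,n}\|_{p_0}$ is bounded by quantities already absorbed into $A_{i,p_0}(n)$). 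Second, your observation that Proposition~\ref{Prp7} formally requires $p\ge2$, hence $p_0\ge4$, is a genuine point the paper glosses over; your proposed fix via Doob plus Burkholder--Davis--Gundy with subadditivity of $x\mapsto x^{p_0/4}$ is a standard and legitimate patch for $2<p_0<4$, though the resulting exponent $k_n^{2/p_0}$ gives $\sig_n^{4/p_0-2}Q_n^{-2/p_0}$ in place of $\sig_n^{-1}Q_n^{-1/2}$, so the bound is of the right qualitative shape but not literally identical.
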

\begin{proof}
Let $Z_j=\cD_{j,n}^2-\bbE[\cD_{j,n}^2|\cF_{0,b_{j-1}}]$. Then $\{Z_j\}$ is a martingale difference with respect to the filtration $\{\cF_{1, b_{j}}^{(n)}\}$.
By applying Proposition \ref{Prp7} with $p/2$ instead of $p$
we see that there is a constant $C=C_{p}$ so that
$$
\left\|\sup_{t\in[0,1]}\left|\big<M_n\big>_t-\sum_{j=1}^{j_n(t)}\cD_{j,n}^2\right|\right\|_{p/2}\leq 
C\sqrt{\sum_{j=1}^{j_n(1)}\|Z_j\|_{p}^2}=O(\max_j\|D_{j,n}\|_{p}\sig_n^{-1}A_n^{-1})
$$
where we have used that $j_n(1)\leq k_n+1=O(\sig_n^2/A_n^2)$. 
\end{proof}

The second step  is the following result.
\begin{lemma}\label{Lemm2.1}
Let us fix some $N\in\bbN$ so that $N\leq c_0\sig_n^2/A_n^2$ for some $c_0>0$.
Let 
$$
R_1(n,l)=\beta_n^2A_n^2\sqrt{\Gamma_n(1)l}\sig_n^{-1}
$$ 
 and $R_2(n,l)$ be the minimum of $a_n(Q_nl^{-1/2}+a_nl^{-1})A_n^{-2}$ 
and $w(n,l)r_n(l)+x(n,l)\sig_n^{-1}$ (all terms were defined in Proposition \ref{QVEST}).
Then there is a constant $C>0$ which depends on $N$ only trough $c_0$ so that for all $l<N$,
 with $\cD_{j,n}=D_{j,n}\sig_n^{-1}$, we have
\begin{equation}\label{Claim3}
\left\|\max_{m\leq N}\Big|\sum_{j=1}^{m}(\cD_{j,n}^2-\bbE[\cD_{j,n}^2])\Big|\right\|_{p/2} 
\end{equation}
$$
\leq CR_1(n,l)+CR_2(n,l)+C\left(\frac{l^{1-2/p}(a_n^2+A_n^2\beta_n^2)}{\sig_n^{(2-4/p)}A_n^{4/p}}+\frac{l^{1/2}(Q_n+a_n^2)}{\sig_n A_n}\right).
$$
\end{lemma}

The proof of Lemma \ref{Lemm2.1} is relatively technical and long, and it is postponed to Section \ref{Pf}.

The  third step in the proof of Proposition \ref{QVEST} is the following result.

\begin{lemma}\label{LastLem}
There is a constant $C_{p}>0$ so that for all $n$ and $t\in[0,1]$,
$$
\left|\sum_{j=1}^{j_n(t)}\bbE[\cD_{j,n}^2]-t\right|\leq C_{p}\left(a_n^2+Q_na_nA_n^{-1}\sig_n+A_n^2(1+K_{p,n}^2)\Gamma_n(1)\right)\sig_n^{-2}.
$$
\end{lemma}
\begin{proof}
First, by the orthogonality property if martingale differences we have
$$
\sum_{j=1}^{j_n(t)}\bbE[\cD_{j,n}^2]=\left\|\sum_{j=1}^{j_n(t)}\cD_{j,n}\right\|_2^2.
$$
Let $X=\sum_{j=1}^{j_n(t)}\cD_{j,n}$ and $Y=\sig_n^{-1}\sum_{j=1}^{j_n(t)}X_{j,n}$. Then  $\|X-Y\|_2\leq 2\sig_n^{-1}\|R\|_{2,n}$ and so 
\begin{equation}\label{FF}
\left|\sum_{j=1}^{j_n(t)}\bbE[\cD_{j,n}^2]-\frac{\sig_{b_{j_n(t)}}^2}{\sig_n^2}\right|=\left|\bbE[X^2]-\bbE[Y^2]\right|\leq\|X-Y\|_2\|X+Y\|_{2}\leq\|X-Y\|_2\left(\|X-Y\|_2+2\|Y\|_2\right) 
\end{equation}
$$
\leq
C_{p}\|R\|_{2,n}(\|R\|_{2,n}+Q_nA_n^{-1}\sig_n)\sig_n^{-2}
$$
where we have also used Lemma \ref{Norm Con} and that $j_n(t)\leq j_n(1)\leq k_n\leq c\sig_n^2/A_n^2$.
Now the lemma follows from Lemma \ref{Lemm0}.
\end{proof}

\begin{proof}[Proof of Proposition \ref{QVEST}]
Proposition \ref{QVEST} follows by a direct combination of Lemma \ref{Ap1Lem}, Lemma \ref{Lemm2.1} and Lemma \ref{LastLem}.
\end{proof}

\subsection{Proof of Lemma \ref{Lemm2.1}}\label{Pf}

We first need the following result.
\begin{lemma}\label{Lemm1.1}
Fix some $n\in\bbN$. Let  $N$ be a positive integer so that $N\leq c_0\sig_n^2/A_n^2$ for some $c_0>0$.
Fix some $l\in\bbN$ so that $l<N$ and for $r=1,2,...,[N/l]-1$ let $$J_r=\{l(r-1)<j\leq lr\},$$ while for $r=[N/l]$ let $J_{[N/l]}$ be the relative complement of the union of $J_1,...,J_{[N/l]-1}$ in $\{1,2,...,N\}$. For each $r$ set
 $$V_r=V_{r,n}=\left(\sum_{j\in J_r}X_{j,n}\right)^2=\left(\sum_{j\in J_r}\sum_{u\in B_j(n)}\xi_{u,n}\right)^2.$$
Set also $U_{r}=V_{r}-\bbE[V_{r}]$ and $\hat U_{r,n}=U_{r}\sig_n^{-2}$.
Then there is a constant $C=C_{q,p}>0$ so that 
\begin{equation}\label{Need1}
\left\|\max_{m\leq [N/l]}\Big|\sum_{r=1}^{m}\hat U_{r,n}\Big|\right\|_{p/2}\leq Cc_0\beta_n^2A_n^2\sqrt{\Gamma_n(1)l}\sig_n^{-1}.
\end{equation}
\end{lemma}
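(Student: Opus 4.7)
The plan is to apply Proposition \ref{Prp7} to the centered sequence $\{\hat U_{r,n}\}_{r=1}^{[N/l]}$ with exponent $p = p_0/2$, relative to the filtration $\mathscr F_r = \cF^{(n)}_{0,\, b_{n_0+lr}(n)}$. Since $\hat U_{r,n}$ is measurable with respect to $\cF^{(n)}_{a_{n_0+l(r-1)+1}(n),\, b_{n_0+lr}(n)} \subset \mathscr F_r$, Proposition \ref{Prp7} reduces the target estimate to controlling
\[
 \sum_{i=1}^{[N/l]} b_i, \qquad b_i := \max_{i \leq m \leq [N/l]} \Bigl\| \hat U_{i,n} \sum_{k=i}^{m} \bbE[\hat U_{k,n} \mid \mathscr F_i] \Bigr\|_{p_0/4}.
\]

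\textbf{Key estimates.} The bound on $b_i$ will proceed in two steps. First, write $V_r = W_r^2$ with $W_r = \sum_{j \in J_r} \Xi_{j,n}$; Lemma \ref{Norm Con} applied to the (at most) $l$ indices in $J_r$ yields $\|W_r\|_{p_0} \leq C \sqrt{\Pi_{p_0,n}}\,\beta_n Q_n^{1/2} l^{1/2}$, whence by the triangle inequality
\[
 \|\hat U_{r,n}\|_{p_0/2} \leq 2 \|W_r\|_{p_0}^2 / \sigma_n^2 \leq C' \Pi_{p_0,n}\, \beta_n^2 Q_n l\, \sigma_n^{-2}.
\]
Second, split the inner sum in $b_i$ into the $k = i$ contribution (which equals $\hat U_{i,n}$, since $\hat U_{i,n}$ is $\mathscr F_i$-measurable) and the tail $k > i$. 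For $k > i$ the gap $g_k := a_{n_0+l(k-1)+1}(n) - b_{n_0+li}(n)$ in $\xi$-index between $\mathscr F_i$ and the $\sigma$-algebra generated by $\hat U_{k,n}$ is a strictly increasing positive integer (increasing by at least $1$ per step and in fact by the size of the intervening $\xi$-block), so the definition of $\varpi_{p_0/2,p_0/2,n}$ together with $\bbE[\hat U_{k,n}] = 0$ gives $\|\bbE[\hat U_{k,n} \mid \mathscr F_i]\|_{p_0/2} \leq \varpi_{p_0/2,p_0/2,n}(g_k)\, \|\hat U_{k,n}\|_{p_0/2}$. Since the $g_k$ are distinct positive integers, $\sum_{k > i} \varpi_{p_0/2,p_0/2,n}(g_k) \leq \Pi_{p_0/2,n} - 1$, and so by the triangle inequality
\[
 \max_{m} \Bigl\| \sum_{k=i}^{m} \bbE[\hat U_{k,n} \mid \mathscr F_i] \Bigr\|_{p_0/2} \leq C \Pi_{p_0/2,n}\, \max_{k} \|\hat U_{k,n}\|_{p_0/2}.
\]
Hölder's inequality $\|fg\|_{p_0/4} \leq \|f\|_{p_0/2}\|g\|_{p_0/2}$ then yields $b_i \leq C \Pi_{p_0/2,n}\, \max_k \|\hat U_{k,n}\|_{p_0/2}^2$. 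Summing over $i \leq [N/l] \leq c_0 \sigma_n^2/(Q_n l)$, taking square roots and applying Proposition \ref{Prp7} delivers a bound of the desired order $c_0\, \beta_n^2\, \sqrt{Q_n \Pi_{p_0/2,n}}\, l^{1/2}\, \sigma_n^{-1}$, modulo the constant issue described next.

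\textbf{Main obstacle.} The delicate point is matching the exact constant structure $\sqrt{\Pi_{p_0/2,n}}$ without an extra factor of $\Pi_{p_0,n}$: the route above bounds $\|\hat U_{r,n}\|_{p_0/2}$ by squaring the $L^{p_0}$-estimate on $W_r$ from Lemma \ref{Norm Con}, which introduces $\Pi_{p_0,n}$. To obtain the sharper constant one should exploit the centering $V_r - \bbE[V_r]$ directly, for example by expanding $V_r = \sum_{j_1, j_2 \in J_r} \Xi_{j_1,n} \Xi_{j_2,n}$ as a bilinear form and applying Proposition \ref{Prp7} at the exponent $p_0/2$ to the resulting centered quadratic form, so that the tail $\Pi_{p_0/2,n}$ arises naturally from the $p_0/2$-mixing coefficients without the intervening squaring. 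Careful bookkeeping along these lines, combined with the uniform variance bound $\|W_r\|_2^2 \leq 18 Q_n l$ inherited from Proposition \ref{VLT}, should produce the stated constant.
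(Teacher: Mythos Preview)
Your approach is essentially the same as the paper's: bound $\|U_r\|_{p_0/2}$ via Lemma \ref{Norm Con}, use the $\varpi_{p_0/2,p_0/2,n}$-mixing coefficients together with H\"older to control $\sum_{k\geq i}\|\bbE[U_k\mid \mathscr F_i]\|_{p_0/2}$, and then apply Proposition \ref{Prp7} with $p=p_0/2$.

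The constant issue you flag as your ``main obstacle'' is real, but it is not a gap in your argument --- it is present in the paper's proof as well. The paper writes, immediately after invoking Lemma \ref{Norm Con}, that $\max_r\|U_r\|_{p_0/2}\leq CQ_n\beta_n^2 l$, silently dropping the factor $\Pi_{p_0,n}$ that Lemma \ref{Norm Con} actually produces when one squares $\|W_r\|_{p_0}$. The paper does not carry out any bilinear expansion or sharper quadratic-form estimate of the kind you sketch; it simply omits the factor in the lemma's statement and proof. That this is a bookkeeping slip rather than a genuine improvement is corroborated by the definition of $A_{1,p_0}(n)$ in Section \ref{Main}, which includes the term $q_n\beta_n^2Q_n^{1/2}$ with $q_n=\Pi_{p_0,n}$ --- so the missing factor resurfaces when the lemma feeds into the main theorem. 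Your argument is correct as it stands (with the extra $\Pi_{p_0,n}$); there is no need to pursue the fix you outline.
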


\begin{proof}
It is enough to show that
\begin{equation}\label{Norm Con Need1}
\left\|\max_{m\leq [N/l]}\Big|\sum_{r=1}^{m}U_{r}\Big|\right\|_{p/2}\leq C\sqrt{Nl}A_n^2\beta_n^2\sqrt{\Gamma_n(1)}.
\end{equation}
To prove \eqref{Norm Con Need1}, we first note that by Lemma \ref{Norm Con} we have $\max_r\|U_r\|_{q/2}\leq CA_n^2\beta^2_nl$.  Therefore, by the H\"older inequality and the definition  \eqref{var pi def} of the mixing coefficients $\varpi_{q/2,p/2}(\cdot,\cdot )$ and \eqref{Var po rel}, for each $r$ we have
$$
\left\|U_r\sum_{s\geq r}\bbE[U_s|\cG_{r,n}]\right\|_{p/4}\leq\|U_r\|_{q/2}\sum_{s\geq r}\|\bbE[U_s|\cG_{r,n}]\|_{p/2}\leq (CA_n^2l\beta_n^{2})^2\sum_{j}\varpi_{q/2,p/2,n}(j)$$$$\leq
C(A_n^2l\beta_n^{2})^2\sum_{j}(\al_n(j))^{\frac{2}{p}-\frac{2}{q}}\leq C(A_n^2l\beta_n^{2})^2\Gamma_n(1).
$$
Now \eqref{Norm Con Need} follows from Proposition \ref{Prp7} applied with $p/2$ instead of $p$ and the random variables $U_r$.
\end{proof}

\begin{lemma}\label{EstL}
Let $n_0,N,l$ and $J_r$ be as specified in Lemma \ref{Lemm1.1}.
 Let us define $G_r=\left(\sum_{j\in J_r}D_{j,n}\right)^2$ and $H_r=G_r-\bbE[G_r]$.
\vskip0.2cm
(i) There is a constant $C=C_{p}>0$ so that
\begin{equation}\label{RHS0}
\sig_n^{-2}\left\|\max_{u\leq [N/l]}\big|\sum_{r=1}^{u}(H_r-U_r)\big|\right\|_{p/2}\leq Cc_0a_n(Q_nl^{-1/2}+a_nl^{-1})A_n^{-2}. 
\end{equation}

\vskip0.2cm
(ii)  Let $r_n(l)=\max_{j}\left\|\sum_{s\geq j+1}\bbE[\xi_{s,n}|\zeta_{1,n},...,\zeta_{j,n}]-\bbE[\xi_{s,n}|\zeta_{j-[l/2],n},...,\zeta_{j,n}]\right\|_{p}$.
 Then there is a constant $C=C_p>0$ so that
\begin{equation}\label{RHS}
\sig_n^{-2}\left\|\max_{u\leq [N/l]}\Big|\sum_{r=1}^{u}(H_r-U_r)\Big|\right\|_{p/2}\leq Cc_0\left(w(n,l)r_n(l)+x(n,l)\sig_n^{-1}\right)
\end{equation}
where
$$
w(n,l)=\frac{a_n}{A_n^2 l}+\frac{Q_n}{A_n^2l^{1/2}}+\frac{\sqrt{\Gamma_n(1)}a_n}{\sig_n A_n l^{1/2}}+
\frac{\sqrt{\Gamma_n(1)}Q_n}{\sig_n A_n}
$$
and 
$$
x(n,l)=\left(a_nl^{-1/2}+a_n^2l^{-1/2}+Q_n\right).
$$
\end{lemma}

\begin{proof}[Proof of Lemma \ref{EstL}]
(i) Let us fix some $l\in\bbN$ so that $l<N$. Let $J_r,V_r,U_r$ be as in Lemma \ref{Lemm1.1}.
First, we have 
$$
\|G_r-V_r\|_{p/2}\leq
\left\|\left(\sum_{j\in J_r}X_{j,n}-\sum_{j\in J_r}D_{j,n}\right)\left(\sum_{j\in J_r}X_{j,n}\right)\right\|_{p/2}
+$$
$$\left\|\left(\sum_{j\in J_r}X_{j,n}-\sum_{j\in J_r}D_{j,n}\right)\left(\sum_{j\in J_r}D_{j,n}\right)\right\|_{p/2}.
$$
Now, by the definition of $D_{j,n}$ and Lemma \ref{M Lemma},
$$
\left\|\sum_{j\in J_r}X_{j,n}-\sum_{j\in J_r}D_{j,n}\right\|_{p}\leq 2\|R\|_{p,n}\leq 2a_n
$$
and so by the H\"older inequality and  Lemma \ref{Norm Con},
\begin{equation}\label{Gr Vr}
\|H_r-U_r\|_{p/2}\leq 2\|G_r-V_r\|_{p/2}\leq C
(\|R\|_{p,n}(Q_nl^{1/2}+\|R\|_{p,n})\leq Ca_n(Q_nl^{1/2}+a_n):=CE_{p,n,l}.
\end{equation}
Hence, 
\begin{equation}\label{Hence}
\left\|\max_{u\leq [N/l]}\Big|\sum_{r=1}^{u}(H_r-U_r)\Big|\right\|_{p/2}\leq\sum_{r=1}^{[N/l]}\|H_r-U_r\|_{p/2}\leq C[N/l]E_{p,n,l}.
\end{equation}
\vskip0.2cm

(ii) We first note that by the definitions of $D_{j,n}$ and $R_{j,n}$ there are $\al_{r,n}<\beta_{r,n}$ so that 
$$\sum_{j\in J_r}D_{j,n}=\sum_{j\in J_r}X_{j,n}+R_{\beta_r,n}-R_{\al_r,n}.$$
Let us now define $R_{j,n}^{(l)}=\sum_{s\geq j+1}\bbE[\xi_{s,n}|\zeta_{j-[l/2],n},\zeta_{j-[l/2]+1,n},...,\zeta_{j,n}]$. Then by the definition \eqref{r def} of $r_n(l)=r_n(p,l)$, 
\begin{equation}\label{cbn}
\|R_{j,n}-R_{j,n}^{(l)}\|_{p}\leq r_n(l).
\end{equation}
Set
$A_r=A_{r,n}=R_{\beta_r,n}-R_{\al_r,n}$ and $A_{r,l}=A_{r,l,n}=R_{\beta_r,n}^{(l)}-R_{\al_r,n}^{(l)}$. Then, similarly to Lemma \ref{M Lemma} we have 
$$
\|A_{r,l}\|_{p}\leq a_n.
$$
Moreover, by \eqref{cbn},
$$
\|A_{r}-A_{r,l}\|_{p}\leq 2r_n(l).
$$
Let $G_r^{(l)}=G_{r,n}^{(l)}=\big(\sum_{j\in J_r}X_{j,n}+A_{r,l,n}\big)^2$ and $H_r^{(l)}=H_{r,n}^{(l)}=G_{r}^{(l)}=\bbE[G_{r}^{(l)}]$. Using the Markov inequality and Lemmas \ref{M Lemma} and \ref{Norm Con}  we have that 
\begin{equation}\label{Use}
\|H_r-H_r^{(l)}\|_{p/2}\leq 2\|G_r-G_r^{(l)}\|_{p/2}\leq2\left(\|A_r+A_{r,l}\|_{p}\|A_r-A_{r,l}\|_{p}+2\Big\|\sum_{j\in J_r}X_{j,n}\Big\|_{p}\|A_r-A_{r,l}\|_{p}\right)
\end{equation}
$$
\leq Cr_n(l)(a_n+Q_nl^{1/2})
$$
and so,
$$
\sum_{u=1}^{[N/l]}\|H_r-H_r^{(l)}\|_{p_0/2}\leq CNl^{-1}r_n(l)(a_n+Q_nl^{1/2}).
$$

Next, set $Z_r=Z_{r,n}=H_{r}^{(l)}-U_r$. Then $Z_r$ is a function of $\{X_{j,n}: j\in J_{r,l}\}$
where $J_{r,l}=J_r-\{0,1,...,l/2\}$. Moreover, using \eqref{Gr Vr} and \eqref{Use} we have
$$
\|Z_r\|_{q/2}\leq C\left(E_{p,n,l}+r_n(l)(a_n+Q_nl^{1/2})\right):=\mathscr X(n,l).
$$
Therefore, by first applying the H\"older inequality we see that
$$
\max_{r}\left\|Z_r\sum_{s\geq r}\bbE[Z_s|Z_1,...,Z_r]\right\|_{p/4}\leq \|Z_r\|_{p/2}\left(\|Z_{r}\|_{p/2}+\sum_{s=1}^{n}\|Z_s\|_{q/2}\varpi_{q/2,p/2,n}(sl)\right)$$
$$\leq 
C\mathscr X^2(n,l)\left(1+\Gamma_n(1)\right)\leq C' \mathscr X^2(n,l)\Gamma_n(1)
$$
where we have also used \eqref{Var po rel}.
Hence, by Proposition \ref{Prp7} we have
$$
\left\|\max_{s\leq [N/l]}\Big|\sum_{r=1}^sZ_r\Big|\right\|_{p/2}\leq C''\sqrt{Nl^{-1}\Gamma_n(1)}\mathscr X(n,l)
$$
and the second part follows from the above estimates and our assumption that $N\leq c_0\sig_n^2/A_n^2$.
\end{proof}

\begin{proof}[Proof of Lemma \ref{Lemm2.1}]
Let us write $J_r=\{\al_r,\al_r+1,...,\beta_r\}$ and for each $s\leq N$ let $u_s$ be so that $s\in J_{u_s}$.
Then, with $Z_j=Z_{j,n}=\cD_{j,n}^2-\bbE[\cD_{j,n}^2]$ we have
$$
\cM_N:=\max_{1\leq s\leq N}\left|\sum_{j=1}^{s}Z_j-\sum_{j=1}^{\beta_{u_s}}Z_j\right|\leq 
\max_{1\leq r\leq [N/l]}\max_{z\in J_r}\left|\sum_{j=z+1}^{\beta_r}Z_j\right|.
$$
Now, the total number of indexes taken in the above two maximums is $O(N)$. Therefore, by applying Lemma \ref{MaxLem} with  we get
\begin{equation}\label{MaxInq0}
\|\cM_N\|_{p/2}\leq C\big(N/l\big)^{2/p}\max_{r\leq [N/l],\, z\in J_r}\left\|\sum_{j=z+1}^{\beta_r}Z_j\right\|_{p/2}\leq Cc_0\sig_n^{-(2-4/p)}l^{1-2/p}A_n^{-4/p}(a_n^2+A_n^2\beta_n^2):=w_n
\end{equation}
where in the second inequality we have used \eqref{LP B} and that $\beta_r-z\leq\beta_r-\al_r\leq 2l$. Here $C,C'$ are some positive constants. Hence
\begin{equation}\label{MaxInq}
\left\|\max_{m\leq N}\Big|\sum_{j=1}^{m}(\cD_{j,n}^2-\bbE[\cD_{j,n}^2])\Big|\right\|_{p/2}\leq  
w_n+\left\|\max_{s\leq N}\Big|\sum_{j=1}^{s}Z_j\Big|\right\|_{p/2}. 
\end{equation}

Next, in order to estimate the second term on the right hand side of \eqref{MaxInq}, let $G_r=\left(\sum_{j\in J_r}D_{j,n}\right)^2$ be as defined in Lemma \ref{EstL}, and set $T_r=G_r-\sum_{j\in J_r}D_{u,n}^2$. Then $\{T_r\}$ is a martingale difference with respect to the filtration $\cG_r=\cF_{1,b_{rl}(n)}^{(n)}$, where $b_j(n)$ is the right end point of the block $B_j(n)$. Moreover, since 
$$
\max_{u\leq v}\left\|\sum_{j=u}^vD_{j,n}-\sum_{j=u}^v X_{j,n}\right\|_{q}\leq 2\|R\|_{q,n}\leq 2a_n,
$$
 using \eqref{MaxMom}, Lemma \ref{Norm Con} and that $|J_r|\leq 2l$  we have
\begin{equation}\label{Tr bound}
\|T_r\|_{p/2}\leq Cl\left(Q_n+a_n^2\right).
\end{equation}
Next, observe that $\sum_{j=n_0+1}^{\beta_{u_s}}Z_j=\sum_{r=1}^{u_s}\sum_{j\in J_r}Z_j$ and so, with $H_r=G_r-\bbE[G_r]$ we have
\begin{equation}\label{EE}
\sum_{j=1}^{\beta_{u_s}}Z_j=\sig_n^{-2}\sum_{r=1}^{u_s}H_r-\sig_n^{-2}\sum_{r=1}^{u_s}T_r.
\end{equation}
Now, by Proposition \ref{Prp7} applied with the martingale $\{T_r\}$,
\begin{equation}\label{EEE}
\left\|\max_{u\leq [N/l]}\Big|\sum_{r=1}^{u}T_r\Big|\right\|_{p/2} \leq C_{p}\sqrt{\sum_{r=1}^{[N/l]}\|T_r\|_{p/2}^2}\leq C_{p}'\left(Q_n+a_n^2\right)\sqrt{Nl}.
\end{equation}
where in the second inequality we have  used \eqref{Tr bound}.
Finally,  Lemma \ref{Lemm2.1} follows from applying \eqref{MaxInq}, \eqref{EE} and \eqref{EEE} and then applying Lemma \ref{Lemm1.1}, Lemma \ref{EstL} in order to estimate the term\footnote{Note that $$\left\|\max_{s\leq N}|\sum_{r=1}^{u_s}H_r|\right\|_{p/2}\leq\left\|\max_{u\leq [N/l]}|\sum_{r=1}^{u}H_r|\right\|_{p/2}.$$}
 and using that $N\leq c_0\frac{\sig_n^2}{A_n^2}$.
\end{proof}

\section{Additional results: proofs}

\subsection{A Berry-Esseen theorem via the Stein-Tikhomirov method}

\begin{proof}[Proof of Theorem \ref{BEthm}]
Fix some $n$. Let $X_{j,n}=\sum_{k\in B_{j}(n)}\xi_{k,n}$, $j=1,2,...,k_n$ be as defined in Section \ref{Block}.
Recall that in \eqref{Then} we derived that
\begin{equation}\label{Then1}
\|X_{j,n}\|_{q}\leq 4\beta_n A_n.
\end{equation}
Since $X_{j,n}$ is a function of $\{\zeta_{k,n}: j\in B_j(n)\}$ for $j<k_n$ and $X_{k_n}$ is a function of $\{\zeta_{k,n}: j>b_{k_n-1}(n)\}$ we obtain that if $A$ is measurable with respect to $\sig\{X_{1,n},...,X_{k,n}\}$ and $B$ is measurable with respect to $\sig\{X_{k+m,n},...,X_{k_n,n}\}$, where $k+m\leq k_n$, then 
$$
|\bbP(A\cap B)-\bbP(A)\bbP(B)|\leq \al_n(m).
$$
Thus, the $\al$-mixing coefficients $\tilde\al_n(j)$ of the array $\{X_{1,n},...,X_{k_n,n}\}$ do not exceed  the ones of $\{\zeta_{1,n},...,\zeta_{n,n}\}$. 
Now Theorem \ref{BEthm}(i) follows from applying \cite[Lemma A]{Sku} with $X_j=X_{j,n}$ and $s=q$ with $k=c\ln \sig_n$ and $h=R_n\sig_n^{\gamma_n}$. We note that this pair $(k,h)$ will satisfy the conditions of \cite[Lemma A]{Sku} only when $B_n=o(\sig_n)$, but when the latter fails the upper bound in Theorem \ref{BEthm}(i) is larger than one, so Theorem \ref{BEthm}(i)  trivially holds true. After plugging in these $h$ and $k$ we use that $\|X_{j,n}\|_{L^q}=O(\be_n A_n)$ to bound the term $x$ appearing in Theorem \cite[Lemma A]{Sku}, and we use that $\sum_{j=1}^{k_n}\tilde \al_n(j)\leq B_n\sum_{j=1}^{k_n}j^{-a_n}=O(B_n\sig_n^{1-a_n})$ to bound the term $\al$ appearing in \cite[Lemma A]{Sku}, which yields the desired upper bound (the exact choice of the power $\gamma_n$ comes from  comparing between some of the powers of $\sig_n^{-1}$ which appear in the upper bound from \cite[Lemma A]{Sku}).

 Theorem \ref{BEthm}(ii) follows from applying \cite[Lemma A]{Sku} with $X_j=X_{j,n}$ and $s=q$ with $k=c\ln \sig_n$ and $h=\frac{\ln R_n+C \ln \sig_n }{\ka_n}$ for $C>c$ large enough. 
We note that this pair $(k,h)$ will satisfy the conditions of \cite[Lemma A]{Sku} only when $B_n=o(\sig_n^{1/2})$ and $\ka_n^{-1}=o(\sig_n^{1/2})$, but when the latter fails the upper bound in Theorem \ref{BEthm}(i) is larger than one, so Theorem \ref{BEthm}(i) will trivially hold true. After plugging in these $h$ and $k$ we use that $\|X_{j,n}\|_{L^q}=O(\be_n A_n)$ to bound the term $x$ appearing in \cite[Lemma A]{Sku}, and we use that $\sum_{j=1}^{k_n}\tilde \al_n(j)\leq B_n\sum_{j=1}^{k_n}e^{-j\ka_n}=O(B_n\ka_n^{-1})$ to bound the term $\al$ appearing in \cite[Lemma A]{Sku}.

\end{proof}

\begin{remark}\label{BErem}
Let us suppose that $\phi(j)=O(j^{-\te})$ for some $\te>4$. Suppose also that $\beta_n$ from  \eqref{MaxMom} is defined with $q=4$ and that $Q_n=O(1)$. Set $X_{j,n}=\sig_n^{-1}X_{j,n}$.
Let us consider the graph $\cG_{n,r}=(V,\cE)=(V_{n,r},\cE_{n,r})$, where $V_n=\{1,2,...,k_n\}$ and $(j_1,j_2)\in\cE_{n,r}$ if and only if $|j_1-j_2|\leq r$. Then the size of a ball around any point in the graph is at most $2r$. Let us denote by $N_v$ the unit ball around $v\in V$ in this graph, and let $N_v^c=V\setminus N_v$. Using this ``weak-dependence" graph applying
 \cite[Theorem 1.2.2]{HK} with $D=6r$, $r=r_n=\sig_n^{3/(\te+2)}$ and $\rho=1$, and then  \cite[Lemma 1.2.3]{HK} with $p=q$ (for $p$ large enough) and  \cite[Lemma 1.2.5]{HK}, using also Proposition \ref{PropMix} we obtain that 
 $$
\sup_{t\in\bbR}|\bbP(S_n/\sig_n\leq t)-\Phi(t)|\leq A\sig_n^{-\zeta(\te)}
$$
where $\Phi$ is the standard normal distribution function, and $\zeta(\te)\to 1$ as $\te\to\infty$.
In fact, our computation shows that we can take
$\zeta(\te)=\frac{\te}{\te+1}\min(1-2\ve_\te,\ve_\te\te-2,\ve_\te(\te/2-1))$, $\ve_\te=\frac{3}{\te+2}$.
Some rate can also be obtained when $\phi_n(j)$ grows sufficiently moderately in $n$. Moreover, by applying  \cite[Theorem 1.2.1]{HK} we also obtain convergence rates in the Wasserstein metric.
\end{remark}

\subsection{Moderate deviations and moments estimates via the method of cumulants: proof of Theorems \ref{MDPthm} and \ref{RoseIneq}}

\subsection{Interlaced mixing properties}
 We will need first the following
general result. Let $U_i,\,i=1,2,...,L$ be  $d_i$-dimensional random vectors defined on the
probability space $(\Om,\cF,P)$ from Section \ref{Main}, and
$\{\cC_1,\cC_2\}$ be a partition of $\{1,2,...,L\}$. 
Consider the random vectors $U(\cC_j)=\{U_i: i\in\cC_j\}$, $j=1,2$, 
and let 
\[
U^{(j)}(\cC_i)=\{U_i^{(j)}: i\in\cC_j\},\,\, j=1,2
\]
be independent copies of the $U(\cC_j)$'s.
For each  $1\leq i\leq L$ let $a_i\in\{1,2\}$ be the unique index
such that $i\in\cC_{a_i}$.

In the course of the proof of Theorems \ref{FunCLT}, \ref{BEthm} and \ref{MDPthm} we will need the following result which appears as  Corollary 1.3.11 in \cite{HK}. 

\begin{lemma}\label{lem3.1-StPaper}
Let $\cH_{k,l}$ be a nested\footnote{That is $\cH_{k,l}\subset\cH_{k',l'}$ if $[k,\ell]\subset[k',\ell']$} family of sub-$\sig$-algebras in some probability space.
Suppose that each $U_i$ is $\cH_{m_i,n_i}$-measurable, where $n_{i-1}<m_i\leq n_i<m_{i+1}$, 
$i=1,...,L$, $n_0=0$ and $m_{L+1}=k_n+1$.   Let $\phi(\cdot)$ be the $\phi$-mixing coefficients corresponding to the family $\cH=\{\cH_{k,l}\}$, namely 
$$
\phi(k)=\sup\{\phi(\cH_{u,v},\cH_{u_1,v_2}): u_1-u\geq k\}.
$$
Then,
\begin{equation}\label{alpha cor}
\al\big(\sig\{U(\cC_1)\},\sig\{U(\cC_2)\}\big)\leq4\sum_{i=2}^L\phi(m_i-n_{i-1})
\end{equation}
where $\sig\{X\}$ stands for the $\sig$-algebra generated by a random variable 
$X$ and $\alpha(\cdot,\cdot)$ are the $\al$ mixing coefficients.
\end{lemma}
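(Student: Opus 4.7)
My plan is to establish the bound via a coupling-and-telescoping scheme: gradually swap the originals $U_i$ for their independent copies $U^{(a_i)}_i$, one position at a time, paying a $\phi$-mixing cost at each swap that is controlled by the gap separating position $j$ from positions $<j$.

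First, by the definition $\alpha(\cG,\cF)=\sup|\bbP(A\cap B)-\bbP(A)\bbP(B)|$, it suffices to bound
\[
\Delta:=\bbE[f(U(\cC_1))g(U(\cC_2))]-\bbE[f(U(\cC_1))]\bbE[g(U(\cC_2))]
\]
uniformly over indicators (equivalently, over $f,g$ with $\|f\|_\infty,\|g\|_\infty\le 1$). Using the independence of the two copy blocks $U^{(1)}(\cC_1)$ and $U^{(2)}(\cC_2)$ together with marginal equivalence, the product of marginals equals $\bbE[F(V^{(0)})]$, where $V^{(0)}:=(U^{(a_i)}_i)_{i=1}^L$ and $F(v_1,\dots,v_L):=f((v_i)_{i\in\cC_1})g((v_i)_{i\in\cC_2})$; the joint equals $\bbE[F(V^{(L)})]$ with $V^{(L)}:=(U_i)_{i=1}^L$.

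Next, I would interpolate by setting
\[
V^{(j)}:=\bigl(U_1,\dots,U_j,\,U^{(a_{j+1})}_{j+1},\dots,U^{(a_L)}_L\bigr),\quad 0\le j\le L,
\]
and telescope
\[
\Delta=\sum_{j=1}^L\bigl(\bbE[F(V^{(j)})]-\bbE[F(V^{(j-1)})]\bigr).
\]
For each $j$, the vectors $V^{(j)}$ and $V^{(j-1)}$ differ only at position $j$, where the independent copy $U^{(a_j)}_j$ is replaced by the original $U_j$. I would then condition first on the independent-copy coordinates at positions $i>j$ belonging to the \emph{opposite} group (those with $a_i\ne a_j$), which are genuinely independent of all originals and of the $\cC_{a_j}$-copy. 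With these frozen, the same-group copies $(U^{(a_j)}_i)_{i>j,\,a_i=a_j}$ can be integrated out using the fact that the joint law of the $\cC_{a_j}$-copy equals that of $(U_i)_{i\in\cC_{a_j}}$. What remains is a comparison of the form
\[
\bbE[\Psi(W,Y)]-\bbE_W\bigl[\bbE_{Y'}[\Psi(W,Y')]\bigr],
\]
where $W$ is a measurable function of $(U_1,\dots,U_{j-1})$ (hence measurable with respect to $\cH_{0,n_{j-1}}$), $Y:=U_j$ is $\cH_{m_j,n_j}$-measurable, $Y'$ has the law of $Y$ but is independent of $W$, and $\|\Psi\|_\infty\le 1$. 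Applying the pointwise-in-$w$ $\phi$-mixing inequality $\|\bbE[h\mid\cG]-\bbE[h]\|_\infty\le 2\phi(\cG,\cF)\|h\|_\infty$ (valid for $h\in L^\infty(\cF)$ by the definition of $\phi$ recalled in Section~\ref{SecMixCoef}) with $\cG=\sigma(W)$, $\cF=\sigma(Y)$, $h(y)=\Psi(w,y)$, and then integrating in $w$, bounds each telescope term by $2\phi(m_j-n_{j-1},\cH)$. Summing and taking the supremum over $f,g$ gives $\alpha(\sigma U(\cC_1),\sigma U(\cC_2))\le 2\sum_{j=2}^L\phi(m_j-n_{j-1},\cH)$, which is certainly bounded by the claimed $4\sum_{i=2}^L\phi(m_i-n_{i-1},\cH)$.

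The main technical subtlety I foresee is the conditioning-and-marginalizing step for the same-group copies: one must verify that after freezing the opposite-group copies and integrating out the $\cC_{a_j}$-copy coordinates at positions $>j$, the resulting function of $(W,Y)$ is bounded by $1$, genuinely of a product form admissible for a single $\phi$-mixing inequality, and that the effective gap is at most $m_j-n_{j-1}$. The key observation making this clean is that the original $U_{j+1},\dots,U_L$ never appear in $V^{(j)}$ (only their copies do), so after the marginalization the problem at step $j$ reduces to a genuine two-block comparison across a single gap at position $j$, which monotonicity of $\phi$ in the lag bounds by $\phi(m_j-n_{j-1},\cH)$.
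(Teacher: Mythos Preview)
The paper does not prove this lemma; it cites Corollary~1.3.11 of \cite{HK} (and Corollary~3.3 of \cite{HaSt}) and uses the result as a black box. So there is no in-paper proof to compare against; the only question is whether your argument is correct.

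Your telescoping scheme has a genuine gap at the step where you ``integrate out the same-group copies''. Fix $j$ and say $a_j=1$. In $V^{(j-1)}$ position $j$ carries the copy $U^{(1)}_j$, which is \emph{not} independent of the same-group copies $(U^{(1)}_i)_{i>j,\,a_i=1}$: by construction the whole block $(U^{(1)}_i)_{i\in\cC_1}$ has the \emph{joint} law of $(U_i)_{i\in\cC_1}$. Hence, when you integrate out $(U^{(1)}_i)_{i>j,\,a_i=1}$ in the $V^{(j-1)}$ term you must use the \emph{conditional} law given $U^{(1)}_j$, whereas in the $V^{(j)}$ term (where position $j$ is the original $U_j$, independent of the $\cC_1$-copy) you use the \emph{marginal} law. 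The resulting integrands are therefore different, and the $j$-th telescope term is \emph{not} of the form $\bbE[\Psi(W,Y)]-\bbE_W\bbE_{Y'}[\Psi(W,Y')]$ for a single $\Psi$. Concretely, the extra discrepancy left over is, for each frozen $w$,
\[
\int f(w_{\cC_1},y,z)\,d\mu_{\ge j}(y,z)-\int\!\!\int f(w_{\cC_1},y,z)\,d\mu_{>j}(z)\,d\mu_j(y),
\]
where $\mu_{\ge j}$ is the joint law of $(U_j,(U_i)_{i>j,\,i\in\cC_1})$ and $\mu_j,\mu_{>j}$ are its marginals. This quantity measures the dependence between $U_j$ and the \emph{later} same-group variables; it is controlled by $\phi$ at the gap $m_{j'}-n_j$ for the next same-group index $j'>j$, not at the gap $m_j-n_{j-1}$ your bound requires. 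The same issue shows your $j=1$ term is not zero (as your final $2\sum_{j\ge 2}$ would need): with no $W$ present you are comparing $\bbE[f(U_1,Z)]$ (with $Z$ an independent copy of $(U_i)_{i>1,\,i\in\cC_1}$) against $\bbE[f((U_i)_{i\in\cC_1})]$, and these differ whenever $U_1$ is correlated with later $\cC_1$-variables.

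A route that avoids this problem is to argue by induction on $L$: peel off the last block $U_L$ and pay a single cost $\phi(m_L-n_{L-1},\cH)$ via the conditional-distribution characterization of $\phi$, then apply the inductive hypothesis to $U_1,\dots,U_{L-1}$. The Lindeberg-type swap you propose can also be repaired, but one must track the additional ``forward'' discrepancy at each step, which produces different gaps and a messier (though still valid) bound.
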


Next, fix $n\in\bbN$.
\begin{proposition}\label{PropMix}
Let $X_{j,n}$ be the blocks constructed in Section \ref{Block}.
Let $\Del_1,\Del_2$ be two subsets of $\{1,2,...,k_n\}$ be so that $\text{dist}(\Del_1,\Del_2)\geq b$ so some $b\in\bbN$. Let $\Del=\Del_1\cup\Del_2$ and suppose that $\Del$ is a disjoint union of $L$ sets which are contained either in $\Del_1$ or in $\Del_2$.
 Then
\begin{equation}\label{alpha cor''}
\al\big(\sig\{X_{i,n}: i\in \Del_1\}, \sig\{X_{j,n}: j\in \Del_2\}\big)\leq 4L\phi_n(b).
\end{equation}
\end{proposition}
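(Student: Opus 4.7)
The plan is to reduce the claim to a direct application of Lemma \ref{lem3.1-StPaper} after coarsening the given decomposition of $\Del$. Write $\Del = D_1 \sqcup \cdots \sqcup D_L$ where each $D_l \subseteq \Del_{a_l}$ for some $a_l \in \{1,2\}$, and reorder so that $\max D_l < \min D_{l+1}$ for all $l$ (possible since the $D_l$'s are disjoint finite subsets of $\{1,\dots,k_n\}$). Then merge each maximal run of consecutive $D_l$'s whose labels $a_l$ agree into a single block, producing $\tilde D_1,\dots,\tilde D_{L'}$ with $L' \leq L$ and with the property that consecutive $\tilde D_k$'s lie in different $\Del_j$'s. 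Let $\tilde a_k \in \{1,2\}$ record which $\Del_j$ contains $\tilde D_k$, let $\cC_j = \{k : \tilde a_k = j\}$, and set $U_k = (\Xi_{i,n})_{i \in \tilde D_k}$.

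Each $U_k$ is measurable with respect to $\cF^{(n)}_{m_k,n_k}$ where $m_k = a_{\min \tilde D_k}(n)$ and $n_k = b_{\max \tilde D_k}(n)$, because every $\Xi_{i,n}$ with $i \in \tilde D_k$ is a function of $\{\zeta_{u,n} : u \in B_i(n)\}$ and $B_i(n) \subseteq \{m_k,\dots,n_k\}$. The key step is bounding the gap $m_{k+1} - n_k$: since $\tilde D_k$ and $\tilde D_{k+1}$ lie in opposite $\Del_j$'s, we have $\min \tilde D_{k+1} - \max \tilde D_k \geq b$; combined with the fact (from Proposition \ref{VLT}) that the blocks $B_j(n)$ are disjoint, ordered, and each contain at least one integer, one obtains $a_{p+r}(n) \geq b_p(n) + r$ for all $p \geq 1$ and $r \geq 1$, whence $m_{k+1} - n_k \geq b$.

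Applying Lemma \ref{lem3.1-StPaper} to $U_1,\dots,U_{L'}$ with the partition $\{\cC_1,\cC_2\}$ and the nested family $\cH = \{\cF^{(n)}_{u,v}\}$ (for which $\phi(j,\cH) = \phi_n(j)$) then gives
\[
\al\bigl(\sig\{\Xi_{i,n}: i\in\Del_1\},\,\sig\{\Xi_{j,n}: j\in\Del_2\}\bigr) \leq 4\sum_{k=2}^{L'}\phi_n(m_k - n_{k-1}) \leq 4(L'-1)\phi_n(b) \leq 4L\phi_n(b),
\]
using monotonicity of $\phi_n$ and $L' \leq L$. The only nontrivial point is the elementary gap estimate via the block structure of Proposition \ref{VLT}; once that is in hand, the result is immediate from Lemma \ref{lem3.1-StPaper}.
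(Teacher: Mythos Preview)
Your proof is correct and follows essentially the same route as the paper: decompose $\Del$ into ordered pieces, pass to the underlying $\xi$-variables via the block structure of Proposition \ref{VLT}, and invoke Lemma \ref{lem3.1-StPaper}. Your version is in fact more careful than the paper's in two respects---you explicitly merge consecutive pieces with the same label to guarantee alternation (the paper tacitly assumes this), and you justify the gap estimate $m_{k+1}-n_k\geq b$ via the elementary inequality $a_{p+r}(n)\geq b_p(n)+r$, which the paper simply asserts.
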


\begin{proof}
Let us write
\[
\Del:=\Del_1\cup \Del_2=\bigcup_{i=1}^{L}C_i
\]  
where $c_i+b\leq c_{i+1}$ for any $c_i\in C_i$ and $c_{i+1}\in C_{i+1}$,\, $i=1,2,...,L-1$
and each one of the $C_i$'s is either a subset of $\Del_1$ or a subset of $\Del_2$. Let us define $V_j=\{X_{u,n}: u\in C_j\}$. Then $V_j$ is  a function of the vector $U_j$ whose components are the $\xi_{k,n}$'s which appear as summands in one of $X_{u,n}$, $u\in C_j$. Hence, there are numbers $a'_j$ so that  $a'_j+b\leq a'_{j+1}$ and $U_j=\{\xi_k:\,a'_j\leq k\leq a'_{j+1}-b\}$. Applying now Lemma \ref{lem3.1-StPaper} with the partition $\{\cC_1,\cC_2\}$, where $\cC_i$ is the set of indexes $j$ so that $C_j\subset\Del_i$, $i=1,2$ we  obtain \eqref{alpha cor''}.
\end{proof}

\subsection{A moderate deviations principle and Rosenthal type inequalities} 
Let $S$ be a random variable with finite moments of all orders. We recall that the
 the $k$-th cumulant of $S$ is given by
\[
\Gam_k(S)=\frac1{i^k}\frac{d^k}{dt^k}\big(\ln\bbE[e^{itS}]\big)\big|_{t=0}.
\]
Note that $\Gamma_k(aS)=a^k\Gamma_k(S)$ for any $a\in\bbR$.
Moreover,
 $\Gamma_1(S)=\bbE[S]$, $\Gamma_2(S)=\text{Var}(S)$.
The main ingredient in the proof of Theorem \ref{MDPthm} is the following result.
\begin{proposition}\label{Prp}
There is a constant $C>0$ so that for any $k\geq3$ we have
$$
|\Gam_k(S_n/\sig_n)|\leq (CR_n)^{k}(k!)^{1+\frac 1\eta}\sig_n^{-(k-2)}.
$$
where $R_n$ is defined in Theorem \ref{MDPthm}. 
\end{proposition}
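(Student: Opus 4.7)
The plan is to combine the variance linearization of Proposition~\ref{VLT} with a Saulis--Statulevicius type cumulant bound for weakly dependent summands. First, I apply Proposition~\ref{VLT} to decompose $S_n=\sum_{j=1}^{k_n}\Xi_{j,n}$, with $\Xi_{j,n}=\sum_{k\in B_j(n)}\xi_{k,n}$ and $k_n\le \sigma_n^2/Q_n$. Since cumulants are $k$-homogeneous,
\[
\Gamma_k(S_n/\sigma_n)=\sigma_n^{-k}\,\Gamma_k\Big(\sum_{j=1}^{k_n}\Xi_{j,n}\Big),
\]
so it suffices to establish a bound of the form $C^{k}(k!)^{2+\eta}k_n R_n^{k}$ on the right-hand side.

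Next I verify that $\{\Xi_{j,n}\}_{j\le k_n}$ satisfies the hypotheses of a classical cumulant inequality of Saulis--Statulevicius type (as in~\cite{SaulStat}; see also the forms used in~\cite{Douk2,Ha}). The dependence structure is supplied by Proposition~\ref{PropMix}: for any two disjoint subsets of $\{1,\dots,k_n\}$ separated by a gap $b$ and decomposing into $L$ sub-components, the $\alpha$-mixing between the corresponding $\sigma$-algebras is at most $4L\phi_n(b)\le 4L A_n e^{-ab^{\eta}}$, i.e.\ decays stretched-exponentially in $b$. For the marginals, boundedness of $\xi_{j,n}$ combined with the Peligrad inequality~\eqref{Pel} (valid here since $\phi_n(j_n)<\tfrac12-\tfrac16$ by the choice of $j_n$) and Assumption~\ref{MaxMom} applied with arbitrary $p\ge 2$ gives
\[
\|\Xi_{j,n}\|_p\le Cp\bigl(j_n K_{\infty,n}+Q_n^{1/2}\bigr)\le Cp\,R_n/A_n
\]
uniformly in $j$, so each block has sub-exponential--Orlicz tails with scale parameter of order $R_n/A_n$.

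Then I invoke the method of cumulants in the form: if $Y_1,\dots,Y_m$ have $\alpha$-mixing coefficients $\alpha(b)\le A e^{-ab^{\eta}}$ and marginals with $\|Y_j\|_p\le Cp\Delta$ for all $p\ge 2$, then
\[
|\Gamma_k(Y_1+\cdots+Y_m)|\le C_0^{k}(k!)^{2+\eta}\,m\,(\Delta A)^{k},
\]
where $C_0$ depends only on $a,C$. Applied to $Y_j=\Xi_{j,n}$ with $m=k_n\le\sigma_n^2/Q_n$, $\Delta\asymp R_n/A_n$ and mixing prefactor $A=A_n$, this yields
\[
\Big|\Gamma_k\Big(\sum_{j=1}^{k_n}\Xi_{j,n}\Big)\Big|\le C^{k}(k!)^{2+\eta}\,\frac{\sigma_n^{2}}{Q_n}\,R_n^{k},
\]
and dividing by $\sigma_n^{k}$ while using $Q_n\ge\varepsilon_0$ (from~\eqref{Main Ass}) produces the claimed inequality.

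The main technical point is verifying the precise form of the cumulant bound that is actually needed: a Saulis--Statulevicius-type estimate that accommodates $L^p$ (rather than $L^{\infty}$) summands and tracks the prefactor $A_n$ in the mixing decay. The extension to $L^p$ summands is standard via the moment-to-cumulant formula combined with H\"older's inequality over set partitions of $\{1,\dots,k\}$, but the bookkeeping must be done carefully, and the extra factorial factor pushing the exponent from the classical $1+\eta^{-1}$ to $2+\eta$ arises precisely from this polynomial-in-$p$ moment loss together with the absorption of $A_n$ through the block structure. The grouping into variance-linearized blocks is what permits us to dispense with any direct assumption of linear variance growth for $\sum\xi_{j,n}$, which is the novelty relative to the classical applications of the cumulants method.
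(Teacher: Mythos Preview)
Your proposal is correct and follows essentially the same route as the paper: decompose $S_n$ into the blocks $\Xi_{j,n}$ via Proposition~\ref{VLT}, bound $\|\Xi_{j,n}\|_p$ uniformly in $j$ by $Cp$ times a scale involving $Q_n^{1/2}$, $j_n$, $K_{\infty,n}$ using~\eqref{Pel}, and then feed the block array into a Saulis--Statulevi\v{c}ius type cumulant bound exploiting the stretched-exponential $\phi$-mixing.

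The only substantive difference is in how the cumulant bound is packaged. You invoke it as a black box for $\alpha$-mixing arrays with sub-exponential marginals; the paper instead first \emph{derives} the intermediate product-decorrelation inequality
\[
\Big|\bbE\Big[\prod_{j\in\cJ}\prod_{i\in A_j}\Xi_{i,n}\Big]-\prod_{j\in\cJ}\bbE\Big[\prod_{i\in A_j}\Xi_{i,n}\Big]\Big|
\le C(r-1)\Big(\prod_{j,i}\|\Xi_{i,n}\|_{2k}\Big)k\sqrt{\phi_n(b)}
\]
directly from the Hall--Heyde covariance inequality~\eqref{alpha cov HallHyde} combined with the \emph{interlaced} mixing bound~\eqref{alpha cor'}, and then cites \cite[Corollary~3.2]{Ha} (which takes exactly this decorrelation estimate as its hypothesis). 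Your sketch of ``moment-to-cumulant formula plus H\"older over set partitions'' is precisely what that corollary does, so the content is the same; just be aware that the ordinary left-right $\alpha$-mixing condition you state in your black box is not literally what is used---one needs the interlaced version (which you do supply via Proposition~\ref{PropMix}), and the factor $L$ there is what gets absorbed into the extra factorial power. The paper's presentation has the advantage of making this step explicit and citable, while yours is a cleaner high-level summary of the same argument.
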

We note that Proposition \ref{Prp} implies that  
$$
|\Gam_k(S_n/\sig_n)|\leq (CR_n^3)^{k-2}(k!)^{1+\frac 1\eta}\sig_n^{-(k-2)}.
$$
Using these estimates on the cumulants, Theorem \ref{MDPthm} follows from \cite[Theorem 1.1]{Dor}.
Moreover, the proof of Theorem \ref{RoseIneq} proceeds similarly to the proof of  \cite[Theorem 3]{Douk2} or \cite[Theorem 6.3]{Ha}.

Before we prove Proposition \ref{Prp} we need the following lemma, which is a consequence of Proposition \ref{PropMix}.
\begin{lemma}
Let $\Del_1,\Del_2\subset\bbN$ be two finite sets so that $\text{dist}(\Del_1,\Del_2)\geq b$. Let $d_1=|\Del_1|+|\Del_2|$ be the sum of their cardinalities. 
Then
\begin{equation}\label{alpha cor'}
\al\big(\sig\{X_{i,n}: i\in \Del_1\}, \sig\{X_{j,n}: j\in \Del_2\}\big)\leq 4 d_1\phi_n(b).
\end{equation}
\end{lemma}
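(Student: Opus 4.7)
The plan is to deduce this directly from Proposition \ref{PropMix} by exhibiting a suitable partition of $\Delta := \Delta_1 \cup \Delta_2$. The proposition requires a decomposition $\Delta = \bigsqcup_{i=1}^{L} C_i$ such that each $C_i$ is contained in either $\Delta_1$ or $\Delta_2$ and such that consecutive blocks (in increasing order) are $b$-separated. Once this is produced with $L \le d_1$, the conclusion follows since $\phi_n(b) \le \phi(b)$.

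Concretely, I would define an equivalence relation $\sim$ on $\Delta$ by setting $x \sim y$ when there is a finite chain $x = z_0, z_1, \ldots, z_k = y$ in $\Delta$ with $|z_{j+1} - z_j| < b$ for all $j$, and let $C_1, \ldots, C_L$ be its equivalence classes, enumerated so that $\max(C_i) < \min(C_{i+1})$. By the very definition of the classes, whenever $c \in C_i$ and $c' \in C_{i+1}$ we have $c' - c \ge b$, so consecutive blocks are $b$-separated in the sense required by Proposition \ref{PropMix}. Also trivially $L \le |\Delta| = d_1$.

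The only thing that needs an argument is that each $C_i$ lies entirely in one of $\Delta_1,\Delta_2$. Suppose, for contradiction, that some $C_i$ contains an element $x \in \Delta_1$ and an element $y \in \Delta_2$. Then a $\sim$-chain $x = z_0, \ldots, z_k = y$ inside $C_i$ exists; walking along this chain, at some step we pass from an element of $\Delta_1$ to an element of $\Delta_2$, producing two points $z_j \in \Delta_1$, $z_{j+1} \in \Delta_2$ with $|z_{j+1} - z_j| < b$, contradicting the hypothesis $\mathrm{dist}(\Delta_1,\Delta_2) \ge b$. Hence every $C_i$ is mono-colored, as required.

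With this decomposition in hand, Proposition \ref{PropMix} yields
\[
\alpha\bigl(\sigma\{\Xi_{i,n}: i \in \Delta_1\}, \sigma\{\Xi_{j,n}: j \in \Delta_2\}\bigr) \le 4L\phi_n(b) \le 4d_1 \phi(b),
\]
finishing the proof. There is no real obstacle here; the only small subtlety is checking that elements of the two different $\Delta_i$'s cannot be linked by a chain of $<b$-close elements, which is exactly what the distance hypothesis guarantees.
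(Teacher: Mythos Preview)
Your proof is correct and follows exactly the approach the paper intends: the paper simply states that the lemma is ``a consequence of Proposition \ref{PropMix}'' without further detail, and your construction of the maximal $b$-connected components supplies precisely the decomposition $\Delta=\bigsqcup_{i=1}^L C_i$ with $L\le d_1$ that the proposition requires. The verification that each $C_i$ is contained in a single $\Delta_j$ via the chain argument, together with $\phi_n(b)\le\phi(b)$, is exactly what is needed.
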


\begin{proof}[Proof of Proposition \ref{Prp}]
Recall that $S_n=\sum_{j=1}^{k_n}X_{j,n}$.
Let us fix some $n$ and set $V=\{1,2,...,k_n\}$ and $X_j=X_{j,n}$.
Set 
$$
M_n=\max(\tilde\beta_nA_n,K_{\infty,n}j_n).
$$
where
$$
\tilde\beta_n=C_{1/6}\left(1+j_n K_{\infty,n}\right).
$$ 
Here $j_n$ is as defined in Theorem \ref{MDPthm}.
Then, by \eqref{Pel}, we see that there are constants $C_0,A_0>0$ so that for every $j$ and $p>2$, 
\begin{equation}\label{Then11}
\|X_{j,n}\|_{p}^p\leq  C_0p^pM_n^p\leq A_0^pM_n^pp!
\end{equation}
where we have used that $p^p\leq Ae^pp!$ for some $A>1$ (as a consequence of Stirling's approximation).
We will show soon that for all $k\geq1$, $k\geq 1$, $b>0$
and a finite collection $A_j,\,j\in\cJ$ of (nonempty) subsets of $V$ so that 
$\min_{i\not=j}\rho(A_i,A_j)\geq b$ and $r:=\sum_{j\in\cJ}|A_j|\leq k$ we have
\begin{equation}\label{MixGorc}
\left|\bbE\left[\prod_{j\in\cJ}\prod_{i\in A_j}X_{i,n}\right]-\prod_{j\in\cJ} \bbE\left[\prod_{j\in A_j}X_{i,n}\right]\right|\leq
C(r-1)\left(\prod_{j\in\cJ}\prod_{i\in A_j}\|X_{i,n}\|_{2k}\right)k\sqrt{\phi_n(b)}
\end{equation}
where $C$ is some constant. Once \eqref{MixGorc} is obtained
the proof of the proposition is completed  by applying (3.6) from \cite[Corollary 3.2]{Ha},  with the function $\rho(x,y)=|x-y|$, $\del=1$
 and  $\gamma_{\del}(b,k)=Ck\sqrt{\phi_n(b)}$ (the condition \eqref{MixGorc} is on of the main assumptions of \cite[Corollary 3.2]{Ha}). We note that the constant $c=c(c_0,a,u_0,\eta)$ from \cite[equation (3.3)]{Ha} was not given explicitly in  \cite[Corollary 3.2]{Ha}, however it can easily be seen from the arguments in the proof of \cite[Corollary 3.2]{Ha} that we can choose $c_0$ of the form 
$$
c_0=C(c_0,u,\eta)e^{a^{-u}}
$$
for an arbitrary $u>1$. Indeed, what is needed is to control the constants $H,\psi,c_1$ and $\psi_0$ that appear in the course of the proof of  \cite[Corollary 3.2]{Ha}. To conrol $H$, for instance, it is enough to show that $k!4^ke^{-ak^2/4}\leq Ce^{-a^{-u}}$ (because of the assumption that $m^\eta\geq k$ at the begining of the proof of \cite[Corollary 3.2]{Ha}), and this can be done by taking the logarithms of both sides, using that $\ln (k!)\leq k\ln k$ and distingusihing between two cases: $ak\geq C\ln k$ and $ak<C\ln k$ for some $C$ large enough. The desired upper bounds $O(e^{-a^{-u}})$ on  $\psi,c_1$ and $\psi_0$ can be obtained similarly.

In order to prove \eqref{MixGorc}, 
we first recall that (see Corollary A.2 in \cite{HallHyde}) for any two sub-$\sig$-algebras $\cG,\cH\subset\cF$ we have
\begin{equation}\label{alpha cov HallHyde}
\text{Cov}(\eta_1,\eta_2)\leq 8\|\eta_1\|_u\|\eta_2\|_v\big(\al(\cG,\cH)\big)^{1-\frac1u-\frac1v},
\end{equation}
where $h_1$ is  $\cG$-measurable, $h_2$ is $\cH$-measurable
and $1<u,v\leq\infty$ satisfy that $\frac1u+\frac1v<1$ (where we set $\frac1\infty=0$). 
Let us now write $\cJ=\{1,2,...,J\}$ and set $\Del_1=A_1$ and 
$\Del_2=\bigcup_{1<i\leq J}A_i$. Applying (\ref{alpha cov HallHyde}) with 
$u=\frac{2k}{|\Del_1|}$ and $v=\frac{2k}{|\Del_2|}$
we obtain that 
\begin{equation}\label{Temp prod  est}
\Big|\text{Cov}\big(\prod_{i\in \Del_1}X_{i,n},\prod_{i\in\Del_2}X_{i,n}\big)\Big|\leq
8\big\|\prod_{i\in \Del_1}X_{i,n}\big\|_u\,\big\|\prod_{i\in\Del_2}X_{i,n}\big\|_v\al^{1-\frac1{2}}
\end{equation}
where $\al=\al\big(\sig\{X_{i,n}: i\in \Del_1\}, \sig\{X_{j,n}: j\in \Del_2\}\big)$
and we have also used that $\al\leq1$ and that
\[
\frac1u+\frac 1v=\frac{|\Del_1\cup\Del_2|}{2k}=\frac{r}{2k}\leq\frac 1{2}.
\] 
Next, by \eqref{alpha cor'} we have
$$
\al=\al\big(\sig\{X_{i,n}: i\in \Del_1\}, \sig\{X_{j,n}: j\in \Del_2\}\big)\leq 4r\phi_n(b).
$$
Using the H\"older inequality to estimate the norms on the right hand side of (\ref{Temp prod  est}) 
and then repeating the above arguments with $\cJ_i=\{i,i+1,...,J\},\,i=2,3,...,J$ in place of $\cJ$ we
obtain \eqref{MixGorc}, taking into account that $J=|\cJ|\leq \sum_{i\in\cJ}|A_i|=r$, and the proof of the proposition is complete.
\end{proof}

\section{Application to local functionals of uniformly mixing arrays and sequential dynamical systems}\label{Local}
Let $Y_j$ be a sequence of  variables whose mixing coefficients are denoted by $\al(\cdot), \phi(\cdot)$ etc. 
Let $m_n\to\infty$ be  a sequence and let us consider random variables of the form
$$
\zeta_{j,n}=(Y_{j-m_n},...,Y_{j+m_m}).
$$
Then $\al_n(k)\leq \al_n(k-2m_n)$ and $\phi_n(k)\leq \phi(k-2m_n)$ for all $k>m_n$.  
Next, let $\{\xi_{j,n}: \,1\leq j\leq n\}$ be a triangular array of the form
$$
\xi_{j,n}=g_{j,n}(\zeta_{j,n})=g_{j,n}(Y_{j-m_n},...,Y_{j+m_n}).
$$
\begin{lemma}
Suppose that $\phi(n_0)<\frac12$ for some $n_0\in\bbN$.
\vskip0.1cm

(i) If $\xi_{j,n}$ are bounded then  $\beta_n$ defined through \eqref{MaxMom} satisfies
$$
\beta_n=O\left(1+m_nK_{\infty,n}\right),\,\,K_{\infty,n}=\max_j\|\xi_{j,n}\|_\infty.
$$
\vskip0.1cm
(ii) If $\sig_n\geq Cn^{\del}$ for some $\del>0$ then for every $q_0\geq q$,\, $\beta_n$ defined through \eqref{MaxMom} satisfies 
$$\beta_n=O\left(1+m_n n^{1/q_0}K_{q_0,n}\right)$$ where $K_{q_0,n}=\max_j\|\xi_{j,n}\|_{q_0}$.
\end{lemma}
\begin{proof}
The lemma follows from the discussion in Section \ref{MMsec} and noting that we can take $j_n=2m_n+n_0=O(m_n)$.
\end{proof}

\begin{remark}
We refer to \cite{FZ} for a class of examples which includes covariance estimators. In this setup $\xi_{j,n}$ are uniformly bounded in $L^q$ for some $q>2$ and $\lim_{n\to\infty}\frac 1n\text{Var}(S_n)=\sig^2>0$, which means that for these applications it is unnecessary to use $\beta_n$ from \eqref{MaxMom} is  (as discussed in Remarks \ref{R beta} and \ref{R beta 1}).
\end{remark}

Next, we have the following. 

\begin{lemma}
Suppose that $\sum_{j\geq 0}\left(\al(j)\right)^{1/p-1/q}<\infty$. Let $u_n$ be the first index so that 
$$
\sum_{j\geq u_n}\left(\al(j)\right)^{1/p-1/q}\leq \frac{1}{8K_{q,n}}.
$$
Then the number $A_n$ defined in \eqref{A_n def} satisfies
$$
A_n\leq c_0\max(u_n+2m_n,K_{2,n})
$$
for some constant $c_0$. In particular, if $\sup_{j,n}\|\xi_{j,n}\|_{q}<\infty$ then $A_n=O(m_n)$.
\end{lemma}
\begin{proof}
The lemma follows since when $m\geq 2m_n+u$ we have 
$$
\Gamma_n(m)\leq\sum_{j\geq u}\left(\al(j)\right)^{1/p-1/q}.
$$
\end{proof}
Once we have estimates on $A_n$ and $\beta_n$ we can apply all the results in Section \ref{Main}. Note that in order for the results to be meaningful we need to assume that $m_n=O(\sig_n^{t_0})$ for a sufficiently small $t_0$. Thus, in order to get explicit restrictions on the ``allowed" growth rate of $m_n$, we need to have a certain \textit{a-priori} lower bounds on $\sig_n$ for $n$ large enough, which is the case in the examples in \cite{FZ} (for which $\sig_n^2/n\to\sig^2>0$).
\begin{remark}\label{RR}
When $Y_n$ is a Markov chain then $\{\xi_{1,n},...,\xi_{j,n}\}$ is a Markov chain with memory $2m_n$. Thus, if $m_n=o(\sig_n^2)$ then we can apply
 Theorem \ref{FuncCLT MC} with $l_n=4m_n$ for which $r_n(p, [l_n/2])=0$. We thus see that if  $m_n=o(\sig_n^{a})$ for some $a<1/5$, $\sup_{j,n}\|\xi_{j,n}\|_{q}<\infty$ and  $\sum_{j\geq 0}\left(\al(j)\right)^{1/p-1/q}<\infty$ then 
 $$
 d_P(W_n, B)=O(\sig_n^{-w(p)+5a/2}), w(p)\to 1/2 \text{ when } p\to\infty.
 $$
When $\sig_n^2$ grows linearly fast in $n$ then we can replace $A_n$ by $1$ and $\beta_n$ by $K_{q,n}$ and get that if $a<1$ then 
 $$
 d_P(W_n, B)=O(\sig_n^{-w(p)+a/2}), w(p)\to 1/2 \text{ when } p\to\infty.
 $$ 
In particular we get the above rates in the setup of \cite{FZ}.
\end{remark}

\subsection{A linear example with non-linearly growing variances}\label{Sec7.2}
Let us conclude this section with a local example of a  triangular array whose variance grows essentially at an arbitrary rate.
We assume here that $Y_j$ are real-valued and that $\bbE[Y_j]=0$, $\bbE[Y_j^2]=1$ and $\bbE[Y_jY_k]=0$ for $k\not=j$. Moreover, we assume that $Y_j$ are $\phi$-mixing and bounded in $L^q$ for some $q>2$ (for instance, $Y_j$ can be independent).
Let $a_j$ be a positive deceasing bounded sequence and let 
$$
\xi_{k,n}=\sum_{j=k}^{k+m_n}a_jY_{j}.
$$
Then $\|\xi_{k,n}\|_{q}\leq C\sum_{j=0}^{m_n}a_j$. Let us take now $a_j=j^{-\gamma}$ for some $\gamma<1/2$. Then $\|\xi_{k,n}\|_q\leq C'm_n^{1-\gamma}$.

\begin{lemma}
Suppose that $m_n=o(n)$.
There are constants $C_1,C_2>0$ so that for all $n$ large enough we have 
$$
C_1m_n^2 n^{1-\gamma}\leq \sig_n^2\leq C_2 m_n^2 n^{1-\gamma}.
$$
\end{lemma}
\begin{proof}
Let $k<s$. If $s-k>2m_n$ then $\bbE[\xi_{k,n}\xi_{s,n}]=0$. Otherwise,
$$
\bbE[\xi_{k,n}\xi_{s,n}]=\sum_{|j_1|\leq m_n}\sum_{|j_2|\leq m_n}a_{k+j_1}a_{s+j_2}\bbI(k+j_1=s+j_2)=\sum_{r=s-m_n}^{k+m_n}a_{r}^2
$$
and so 
$$
\sig_n^2=\sum_{k=1}^n\bbE[\xi_{k,n}^2]+2\sum_{k=1}^n\sum_{k<s<k+2m_n}\bbE[\xi_{k,n}\xi_{s,n}]=
\sum_{k=1}^n\sum_{j=0}^{2m_n}(2j+1)a_{k-m_n+2j}.
$$
Now, notice that for all $k>5m_n$ and all $0\leq j\leq 2m_n$ we have 
$$
(2j+1)a_{k+m-2j}=(2j+1)(k+m-2j)^{-\gamma}\geq C(2(2m_n)+1)a_{k+3m_n}
$$
where $C$ is a positive constant.
Hence 
$$
 \sum_{k=1}^n\sum_{j=0}^{2m_n}(2j+1)a_{k-m_n+2j}^2\geq  \sum_{k=5m_n}^n\sum_{j=0}^{2m_n}(2j+1)a_{k-m_n+2j}^2\geq 
C(2m_n)(4m_n+1)\sum_{k=5m_n}^{n}a_{k+3m_n}\geq C'm_n^2n^{1-\gamma}.
$$
On the other hand,
 since $a_j=j^{-\gamma}$ is decreasing and positive we have
$$
\sum_{k=1}^n\sum_{j=0}^{2m_n}(2j+1)a_{k-m_n+2j}^2\leq Cm_n^2+\sum_{k=2m_n}^n(2m_n)(4m_n+1)
a_{k-m_n}^2 $$$$\leq Cm_n^2+C_\gamma \sum_{k=m_n}^{n-m_n}(2m_n)(4m_n+1)a_{k}^2\leq Cm_n^2 n^{1-\gamma}
$$
where we have used that $a_{k-m_n}\leq C_\gamma a_k$ if $k>2m_n$. 
\end{proof}
Let us now show that all results in this paper are effective when $m_n=[n^{\del_0}]$ for $\del_0$ small enough, that is, let us provide some effective estimates on $\beta_n$.
Suppose now that $K_{q,n}=O(n^{(1-\gamma)\del_0})$.  Then, as discussed in Section \ref{MMsec}, we have 
$$
\beta_n=O(n^{(1-\gamma)\del_0+1/q+\del_0}).
$$
On the other hand, $\sig_n\geq Cn^{(1-\gamma)/2+\del_0}$. Thus,  for every $r_0>0$ if $q$ is large enough and $\del_0$ is small enough we have $\beta_n=O(\sig_n^{r_0})$. 

\begin{remark}
We can also consider the case when $Y_j=Y_j^{(n)}$ depends on $n$, and in this case instead of assuming that $\bbE[Y_i^{(n)}Y_j^{(n)}]=0$ for $i<j$ we can assume that $|\bbE[Y_i^{(n)}Y_j^{(n)}]|\leq b_n$ for some sequence $b_n$ which decays sufficiently fast to $0$ as $n\to\infty$. Under such conditions we get the same growth rate for the variances.
\end{remark}

\subsection{A sketch of an application to sequential expanding dynamical systems}\label{SDS}
We consider here a class of random variable satisfying certain approximation conditions which arise naturally for certain classes of expanding maps, and a sequence of functionals so that $\sig_n$ grows at least logarithmically fast in $n$.
 For the sake of simplicity, we will focus  on random variables which can be approximated exponentially fast by mixing sequences, but the same idea works with polynomially fast approximation. Moreover, we will only consider the case of approximation by  $\phi$-mixing processes.

Let us now give a precise description. We assume here that for each $r\in\bbN$ there are random variables $\xi_{j,r}$ so that for all $p$,
\begin{equation}\label{Approx}
\sup_j\|\xi_{j}-\xi_{j,r}\|_{p}:=\beta_{p}(r)\leq c_pa^r,\,a\in(0,1)
\end{equation}
and that $\{\xi_{2rj,r}\}$ decay as $O(j^{-c})$, uniformly in $r$ for some $c>1$. Then the $\phi$-mixing coefficients $\phi(j;r)$ of $\{\xi_{j,r}\}$ satisfy
$$
\phi(j+r;r)\leq Cj^{-c}.
$$
For the sake of simplicity, let us also assume that the variables $\xi_{j}$ and $\xi_{j,r}$ are uniformly bounded.

\begin{example}
Let $(\cY_j,\cB_j)$ be measurable spaces, and
let  $T_j:Y_j\to Y_{j+1}$ be a sequence of maps. Let $\zeta_0$ be a $\cY_0$-valued random variable and let us define a $\cY_j$-valued random variable by $\zeta_j=T_{j-1}\circ\dots\circ T_1\circ T_0(\zeta_0)$. Let $g_j:Y_j\to\bbR$ be a uniformly bounded sequence of functions and set 
$\xi_j=g_j(\zeta_j)$. Then condition \eqref{Approx} holds true when $g_j$ are uniformly bounded H\"older continuous functions and $\{T_j\}$ is a sequential subshift of finite type, or when $\{T_j\}$ are uniformly piecewise expanding maps on the unit interval. In both cases $\xi_{j,r}$ is the conditional expectation of $\xi_{j}$ on the pullback by $T_{j-1}\circ\dots\circ T_1\circ T_0$ of $\cM_r=(T_0^{r})^{-1}\cM$
where $T_0^r=T_{r-1}\circ\dots\circ T_1\circ T_0$ and  $\cM$ is either the partition into cylinders of length one (in the subshift case) or the partition into monotonicity intervals.
\end{example}

\subsubsection{Limit theorems}
Set $S_{n,r}=\sum_{j=1}^{n}\xi_{j,r}$.
Then
$$
\|\max_{k\leq n}|S_k-S_{k,r}|\|_{p}\leq \sum_{j=1}^n\|\xi_{j}-\xi_{j,n}\|_p\leq n\beta_{p}(r)\leq nc_pa^r,
$$
and so for every $s>0$ there is a constant $C=C_s>0$ so that 
 when $r=\mathfrak{r}_n=C_q\ln n$  
we have 
\begin{equation}\label{S max est}
 \|\max_{k\leq n}|S_k-S_{k,r}|\|_{p}=o(n^{-s}).
 \end{equation}
If $p\geq 2$ we conclude that $\sig_{n,\mathfrak{r}_n}:=\sqrt{\text{Var}(S_{n,\mathfrak r_n})}=\sig_n+o(1)$ and hence the latter two variances are of the same magnitude. 

 Moreover, we assume that $c'=c(1/p-1/q)>1$. The following two result follow now directly from the definition \eqref{A_n def} of $A_n$ and the discussion at the beginning of Section \ref{Local}. 

\begin{lemma}\label{ll}
The sequences from Section \ref{Main} satisfy
$$
j_n=O(\ln n), \beta_n=O(\ln n),  A_n=O((\ln n)^{s}), C(n;q)=O((\ln n)^{1+s})
$$
where $s=s_{c'}$ depends only on $c'$.
\end{lemma}

\begin{lemma}\label{LLL}
We have
$$|\sig_n^{-1}-\sig_{n,\mathfrak{r}_n}^{-1}|\leq C_q\frac{\|S_n-S_{n,\mathfrak r_n}\|_{2}}{\sig_n^2}
=O(n^{-q}\sig_n^{-2}).$$
\end{lemma}
In the following sections we will explain the main ideas in the derivation of  the functional CLT, convergence rates in the CLT and moderate deviations principles for $S_n$ from the corresponding results for $S_{n,\mathfrak r_n}$ which follow from applying the theorems in Section \ref{Main} (applying the results from Section \ref{Main} with $S_{n,\mathfrak r_n}$ is done similarly to the beginning of Section \ref{Local}).   In order not to overload the paper we will not formulate explicitly the results.

\subsubsection{A Functional CLT}
Let us denote 
$$
\tilde v_n(t)=\min\{1\leq k\leq n: \text{Var}(S_{k,\mathfrak r_n})\geq t\sig_{n,\mathfrak r_n}^2\}
$$
and set 
$$
\tilde W_n(t)=\sig_{n,\mathfrak r_n}^{-1}\sum_{j=1}^{\tilde v_n(t)}(\xi_{j,\mathfrak r_n}-\bbE[\xi_{j,\mathfrak r_n}]).
$$
Let us assume that 
$\sig_n\geq c_0\ln^{s_0} n$ for some $c_0>0$ and $s_0>1+s$ and all $n$ large enough. Then applying Theorem \ref{FunCLT} we see that $\tilde W_n$ converges in distribution in the Skorokhod space $D[0,1]$ to a standard Brownian motion.

In order to prove the functional CLT for 
$$
W_n(t)=\sig_n^{-1}\sum_{j=1}^{v_n(t)}(\xi_{j}-\bbE[\xi_{j}])
$$
from the one for corresponding CLT for $\tilde W_n(\cdot)$ we need the following result.
\begin{lemma}
There is a sequence of positive numbers $\ve_n$ which converges to $0$ so that 
$$
\tilde v_n(t-\ve_n)\leq v_n(t)\leq \tilde v_n(t+\ve_n).
$$
\end{lemma}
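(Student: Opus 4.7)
The plan is to reduce everything to a single uniform approximation statement between the normalized partial variances of $S_k$ and $S_{k,\mathfrak{r}_n}$. Concretely, I would aim to produce a sequence $\varepsilon_n \to 0^+$ such that
\begin{equation*}
\sup_{1 \le k \le n}\left|\frac{\mathrm{Var}(S_k)}{\sigma_n^2} - \frac{\mathrm{Var}(S_{k,\mathfrak{r}_n})}{\sigma_{n,\mathfrak{r}_n}^2}\right| \le \varepsilon_n.
\end{equation*}
Once this is in hand, both inclusions follow immediately from the definitions of $v_n$ and $\tilde v_n$: if $k := v_n(t)$, then $\mathrm{Var}(S_k)/\sigma_n^2 \ge t$, so $\mathrm{Var}(S_{k,\mathfrak{r}_n})/\sigma_{n,\mathfrak{r}_n}^2 \ge t-\varepsilon_n$, and therefore $\tilde v_n(t-\varepsilon_n)\le k = v_n(t)$. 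Conversely, with $k := \tilde v_n(t+\varepsilon_n)$ one gets $\mathrm{Var}(S_{k,\mathfrak{r}_n})/\sigma_{n,\mathfrak{r}_n}^2 \ge t+\varepsilon_n$, hence $\mathrm{Var}(S_k)/\sigma_n^2 \ge t$, i.e.\ $v_n(t)\le \tilde v_n(t+\varepsilon_n)$.

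To produce such an $\varepsilon_n$, I would fix $q$ large and invoke \eqref{S max est} with $p=2$ to get $\max_{k\le n}\|S_k-S_{k,\mathfrak{r}_n}\|_2 = o(n^{-q})$ (the maximum built into \eqref{S max est} supplies the uniformity in $k$ for free). Applying $\big|\|X\|_2^2-\|Y\|_2^2\big|\le \|X-Y\|_2(\|X\|_2+\|Y\|_2)$ to the centered versions of $S_k$ and $S_{k,\mathfrak{r}_n}$, and using the crude bound $\sigma_k,\sigma_{k,\mathfrak{r}_n}\le 2\sigma_n$ (which is valid for large $n$ since $\sigma_{n,\mathfrak{r}_n}=\sigma_n+o(1)$, as observed just before Lemma \ref{LL}), this yields $|\sigma_k^2-\sigma_{k,\mathfrak{r}_n}^2|=o(n^{-q}\sigma_n)$ uniformly in $k$. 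Then I would split
\begin{equation*}
\frac{\sigma_k^2}{\sigma_n^2} - \frac{\sigma_{k,\mathfrak{r}_n}^2}{\sigma_{n,\mathfrak{r}_n}^2} = \frac{\sigma_k^2-\sigma_{k,\mathfrak{r}_n}^2}{\sigma_n^2} + \sigma_{k,\mathfrak{r}_n}^2\left(\frac{1}{\sigma_n^2}-\frac{1}{\sigma_{n,\mathfrak{r}_n}^2}\right),
\end{equation*}
where the first term is $o(n^{-q}\sigma_n^{-1})$ by the above and the second term is controlled by the same order using Lemma \ref{LL}. Consequently $\varepsilon_n := C_q n^{-q}\sigma_n^{-1}$ does the job, and the standing assumption $\sigma_n\to\infty$ (combined with the freedom to choose $q$) gives $\varepsilon_n\to 0$.

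There is no genuine obstacle: all the substantive input (the fast approximation in $L^2$, the asymptotic equality of the two total variances) is already available, and the remaining work is a short $\varepsilon$/$\delta$ manipulation of the two threshold definitions. The only mildly delicate point is ensuring that both the numerator and denominator in the ratio are replaced consistently, which is what the decomposition above achieves.
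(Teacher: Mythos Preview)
Your approach matches the paper's: the paper's entire proof is the one-line assertion that $\max_{1\le k\le n}|\mathrm{Var}(S_k)-\mathrm{Var}(S_{k,\mathfrak r_n})|=o(1)$, and you have simply spelled out why this implies the threshold inequalities and how it follows from \eqref{S max est}.

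One small gap to patch: the bound $\sigma_k,\sigma_{k,\mathfrak r_n}\le 2\sigma_n$ for all $k\le n$ is not justified by $\sigma_{n,\mathfrak r_n}=\sigma_n+o(1)$, since the partial-sum variances need not be monotone in $k$. This does not damage the argument, however. Because the $\xi_j$ (and $\xi_{j,\mathfrak r_n}$) are uniformly bounded with summable correlations, the trivial bound $\sigma_k,\sigma_{k,\mathfrak r_n}=O(n)$ is available, and since the other factor is $o(n^{-q})$ for any $q$ you still get $\max_k|\sigma_k^2-\sigma_{k,\mathfrak r_n}^2|=o(1)$; the same crude polynomial bound on $\sigma_{k,\mathfrak r_n}^2$ handles the second term of your splitting via Lemma~\ref{LL}. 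Your explicit formula for $\varepsilon_n$ changes accordingly, but $\varepsilon_n\to 0$ survives by taking $q$ large enough.
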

This lemma holds true since 
$$\max_{1\leq k\leq n}|\text{Var}(S_{k})-\text{Var}(S_{k,\mathfrak r_n})|=o(1).$$ Using this lemma together with  \eqref{S max est} we conclude that the Prokhorov distance between $W_n(t)$ and $\tilde W_n(t)$ is $o(1)$.

\subsubsection{A Berry-Esseen theorem}
Applying \cite[Lemma 3.3]{BE} we get that 
$$
\sup_{t\in\bbR}|\bbP(\bar S_n/\sig_n\leq t)-\Phi(t)|\leq \sup_{t\in\bbR}|\bbP(\bar S_{n,\mathfrak r_n}/\sig_{n,\mathfrak r_n}\leq t)-\Phi(t)|+C\|\bar S_n/\sig_n-S_{n,\mathfrak r_n}/\sig_{n,\mathfrak r_n}\|_p^{\frac{p}{p+1}}
$$
where $\bar X=X-\bbE[X]$ for every random variable $X$. Using Lemma \ref{LLL}, \eqref{S max est} and some elementary estimates we get that the second term on the above right hand side is at most of order $n^{-1/2}=O(\sig_n^{-1})$, and so it is negligible in terms of the Berry-Esseen theorem, since $O(\sig_n^{-1})$ is the optimal rate. Taking into account Lemma \ref{ll}, we see that by Applying Theorem \ref{BEthm} (ii) with $S_{n,\mathfrak r_n}$ we obtain essentially the same rates $O(\sig_n^{-1/2}\ln^2\sig_n)$ for $S_n/\sig_n$ if $\sig_n\geq c_0n^{\ve}$ for some $c_0,\ve>0$. In fact, we obtain rates of the form  $O(\sig_n^{-1/2+s_c}\ln^2\sig_n)$ when  $\sig_n\geq c_0\ln^{c} n$, with $s_c$ converging to $0$ as $c\to\infty$. 

\subsubsection{A moderate deviations principle}
In general it is less clear to us how to directly obtain the moderate deviations principle for $S_n$ from the one for $S_{n,\mathfrak r_n}$ for general sets $\Gamma$. However, in certain applications we can take $p=\infty$ in \eqref{Approx} then for every diverging sequence $c_n$, a Borel set $\Gamma$, $\ve>0$ and all 
$n$ large enough
$$
\bbP(S_{n}/c_n\in\Gamma)\leq \bbP(S_{n,\mathfrak r_n}/c_n\in\Gamma^{\ve})\,\,\text{ and }\bbP(S_{n,\mathfrak r_n}/c_n\in\Gamma)\leq \bbP(S_{n}/c_n\in \Gamma^{\ve})
$$
where $\Gamma^{\ve}$ is the $\ve$-neighborhood of $\ve$. Taking $n\to\infty$ and then $\ve\to0$ we can derive the moderate deviations for $S_n$ from the corresponding one for $S_{n,\mathfrak r_n}$. Note that the variances of $S_{n,\mathfrak r_n}$ and $S_n$ are of the same order, and so it does not matter if we divide $S_n$ by $\sig_n$ or by $\sig_{n,\mathfrak r_n}$ since the ratio between these two variances can be absorbed in the sequence $(t_n)$ from Theorem \ref{MDPthm}.

\section{Appendix:Mixing coefficients and operator norms}
Let $(\Om,\cF,\bbP)$ be a probability space.
Let $\cG$ and $\cH$ be two sub-$\sig$-alegbras of $\cF$.
Recall first the definition of the following dependence coefficients between $\cG$ and $\cH$:
\begin{equation}\label{al def1}
\alpha(\cG,\cH)=\sup\left\{|\bbP(A\cap B)-\bbP(A)\bbP(B)|: A\in\cG, B\in\cH\right\},
\end{equation}
$$
\phi(\cG,\cH)=\sup\left\{|\bbP(B|A)-\bbP(B)|: A\in\cG, B\in\cH, \bbP(A)>0\right\},
$$
$$
\psi(\cG,\cH)=\sup\left\{\frac{|\bbP(B\cap A)-\bbP(B)\bbP(A)|}{\bbP(A)\bbP(B)}: A\in\cG, B\in\cH, \bbP(A),\bbP(B)>0\right\},
$$
and
\[
\rho(\cG,\cH)=\sup\left\{|\bbE[fg]|: f\in B_{0,2}(\cG), g\in B_{0,2}(\cH)\right\}
\] 
where for any sub-$\sig$-algebra $\cG$, \,$B_{0,2}(\cG)$ denotes the space of all square integrable functions $g$ so that $\bbE[g]=0$ and $\bbE[g^2]=1$.

Next, for any two sub-$\sig$-algebras $\cG$ and $\cH$ of $\cF$, and for every $p,q\geq1$ we set
\begin{equation}\label{var pi def}
\varpi_{q,p}(\cG,\cH)=\sup\left\{\left\|\bbE[h|\cG]-\bbE[h]\right\|_p: h\in L^q(\Om,\cH,\bbP), \|h\|_q=1\right\}.
\end{equation}
Then (see \cite[Ch. 4]{Br}), the classical weak dependence (mixing) coefficients can be expresses as
\begin{equation}\label{RRel}
\phi(\cG,\cH)=\frac12\varpi_{\infty,\infty}(\cG,\cH),\, \rho(\cG,\cH)=\varpi_{2,2}(\cG,\cH),\,
\al(\cG,\cH)=\frac14\varpi_{\infty,1}(\cG,\cH), \psi(\cG,\cH)=\varpi_{\infty,\infty}(\cG,\cH).
\end{equation}
By applying the Riesz–Thorin interpolation theorem \cite[Ch.6]{Folland}, if 
$$
\frac{1}p=\frac{1-\la}{p_0}+\frac{\la}{p_1}\,\text{ and }\,\,\frac{1}{q}=\frac{1-\la}{q_0}+\frac{\la}{q_1}
$$
for some $\la\in(0,1)$ and $p_0,p_1,q_0,q_1\geq1$ then
\begin{equation}\label{RT}
 \varpi_{q,p}(\cG,\cH)\leq \left(\varpi_{q_0,p_0}(\cG,\cH)\right)^{1-\la}\left(\varpi_{q_1,p_1}(\cG,\cH)\right)^{\la}.
\end{equation}
In particular, by taking $q_1=\infty$, $p_1=1$, $\la=\frac1p-\frac 1q$, $p_0=q_0=q+1-q/p$ and using the trivial upper bound $\varpi_{q_0,p_0}\leq 2$ for $q_0\geq p_0$ we see that for every $q>p>2$ and all sub-$\sig$-algebras $\cG$ and $\cH$,
\begin{equation}\label{MixCoe}
\varpi_{q,p}(\cG,\cH)\leq 2^{1+\frac 1q-\frac 1p}\left(\al(\cG,\cH)\right)^{\frac{1}p-\frac 1q}.
\end{equation}

\end{document}